\documentclass[10pt]{article}
\usepackage{}
\usepackage{amsmath}
\allowdisplaybreaks[4]
\usepackage{amsthm}
\usepackage{amsfonts}
\usepackage{fancybox}
\usepackage{epsfig}
\usepackage{epsf}
\usepackage{amssymb}
\usepackage{epic,eepic}
\usepackage{latexsym,bm}
\usepackage{graphicx}
\usepackage{multirow}
\usepackage{float}
\usepackage{color}
\usepackage[top=0.8in, bottom=0.8in, left=0.8in, right=0.8in, dvips, letterpaper]{geometry}
\usepackage{algorithm}
\usepackage{algorithmic}
\usepackage{cite}
\usepackage{boxedminipage}

\usepackage{url}
\usepackage{lineno} 
\usepackage{makecell}

\usepackage{subcaption}
\usepackage{booktabs} 
\usepackage{caption}  

\newcommand\argmin{\mathop{\textrm{argmin}}}
\newcommand\crule[1][5cm]{%
  \par
  \nointerlineskip
  \centerline{\hbox to #1{\hrulefill}}%
  \nointerlineskip}

\numberwithin{equation}{section}
\numberwithin{algorithm}{section}
\newtheorem{theorem}{{\sc Theorem}}[section]
\newtheorem{lemma}{{\sc Lemma}}[section]
\newtheorem{corollary}[theorem]{Corollary}
\newtheorem{remark}{Remark}[section]

\newtheorem{definition}{{\sc Definition}}[section]

\newcommand{\R}{\mathbb{R}}

\newcommand{\be}{\begin{equation}}
\newcommand{\ee}{\end{equation}}
\newcommand{\bee}{\begin{equation*}}
\newcommand{\eee}{\end{equation*}}
\newcommand{\bea}{\begin{eqnarray}}
\newcommand{\eea}{\end{eqnarray}}
\newcommand{\beaa}{\begin{eqnarray*}}
\newcommand{\eeaa}{\end{eqnarray*}}

\title{
	\textbf{A sequential regularized piecewise affine algorithm for nonconvex nonsmooth multicomposite optimization in RNN training}}

\author{Lingzi Jin\thanks{{ling-zi.jin@connect.polyu.hk}, Department of Applied Mathematics, Hong Kong Polytechnic University, Kowloon, Hong Kong.  }
        \and
        Xiao Wang\thanks{{wangx936@mail.sysu.edu.cn}, School of Computer Science and Engineering, Sun Yat-sen University, Guangzhou, China. 
		}
       	\and
       	Xiaojun Chen\thanks{{maxjchen@polyu.edu.hk}, Department of Applied Mathematics, Hong Kong Polytechnic University, Kowloon, Hong Kong. 
		}
}

\begin{document}

\maketitle
\begin{abstract}
  This paper focuses on a class of nonconvex nonsmooth multicomposite optimization problems for training RNNs (Recurrent Neural Networks). 
  We first establish easily verifiable conditions under which an approximate first-order d-stationary point of the problem is guaranteed to be an approximate second-order d-stationary point.  
  Subsequently, we propose a sequential regularized piecewise affine algorithm, SRPA, that minimizes a regularized piecewise affine function at each iteration, where the idea of active-set strategy is incorporated to reduce the computational cost per iteration. 
  Leveraging the aforementioned conditions for second-order d-stationarity, we establish the global convergence of SRPA to second-order d-stationary points and the complexity bound of $ \mathcal{O}(\epsilon^{-2}) $ for obtaining an $ \epsilon $-approximate second-order d-stationary point. 
 Finally, numerical experiments for training RNNs on synthetic and real-world datasets demonstrate the promising performance of SRPA compared with state-of-the-art algorithms.
\vspace{0.8cm}

\noindent {\bf Keywords:} {Multicomposite optimization, first-order d-stationarity, second-order d-stationarity, piecewise affine, complexity, recurrent neural network}

\vspace{0.5cm}

\noindent {\bf MSC codes:} 49J52, 65K05, 90B10, 90C26, 90C30 

\end{abstract}

\section{Introduction} \label{sec1}
Consider 
\begin{align}
	\label{l1pen}
	\tag{P1}
	\min_{z \in \R^{\bar{N}} } ~ \Theta(z):= F(z) + \sum_{\ell=1}^{L} \beta_{\ell} \| u_{\ell}  - \psi_{\ell-1} ( \theta, u_{1}, \dots, u_{\ell-1} ) \|_{1},
\end{align}
 where $ z = ( \theta^{\top}, u_{1}^{\top}, \dots, u_{L}^{\top} )^{\top} \in \R^{\bar{N}} $ with $ \theta \in \R^{n}, u_{\ell} \in \R^{N_{\ell}} $ for all $ \ell \in [L] := \{ 1,\dots,L\} $, $ ( u_{1}, \dots, u_{\ell-1} ) $ is an empty placeholder when $ \ell=1 $,
 \begin{align*}
	& F(z):= g(u_{1},\dots,u_{L}) + \lambda \|\theta \|^{2} \text{ with } \lambda >0, \, g : \R^{ \bar{N}_{L} } \rightarrow \R_{+}, \\
	& \beta_{\ell} >0 \text{ and } \psi_{\ell - 1} : \R^{n+ \bar{N}_{\ell-1} } \rightarrow \R^{N_{\ell}} \text{ for all } \ell =1,\dots,L,
 \end{align*}
 and $ \bar{N}_{0}:=0 , \, \bar{N}_{\ell}:= \sum_{j=1}^{\ell} N_{j} $ for all $ \ell \in [L] $.
 Here, $g$ and $ \{ \psi_{\ell-1}, \ell=1,\dots,L \} $ are locally Lipschitz continuous, twice directionally differentiable (Definition \ref{def:dd}) but possibly nonsmooth. 
Problem \eqref{l1pen} covers a wide range of applications in machine learning, including the training process of RNNs, where $ \theta $ refers to the network parameter and $ \{ u_{\ell}, \, \ell \in [L] \} $ are the auxiliary variables for the $ \ell $th layer of the network \cite{JWC2025,LRP-LLC}.

We aim to propose an algorithm for finding a second-order d-stationary point of \eqref{l1pen} with theoretical complexity guarantees. 
In addition to local Lipschitz continuity and twice directional differentiability of $g$ and $ \{ \psi_{\ell-1} \} $, we further assume that $g$ is smooth (i.e., differentiable with locally Lipschitz continuous gradient) and $ \{ \psi_{\ell-1,i} , \ell \in [L], i \in [N_{\ell}] \} $ are either smooth or piecewise affine (Definition \ref{def:PA}). Then, \eqref{l1pen} can be reorganized as 
\begin{equation}
	\tag{P1$_{r}$}
	\label{l1pen_r}
	\min_{z}~ \Theta(z)= F(z) + H(z) + \sum_{j \in \mathcal{J}} f_{j}(z),
\end{equation}
where $ H: \R^{\bar{N}} \rightarrow \R_{+} $ is continuous and piecewise affine, $ f_{j}(z):=| c_{j}(z) | $ with a smooth $ c_{j}:\R^{\bar{N}} \rightarrow \R $ for any $ j \in \mathcal{J}:= \{ 1,\dots,J \} $, $J$ is a nonnegative integer with $ J \leq \bar{N}_{L} $.

There exist several algorithms with asymptotic convergence to d-stationary points of reformulated \eqref{l1pen_r}. 
According to \cite[Chapter 2]{Scholtes2012}, any piecewise affine function can be represented as the difference of the maxima of two sets of affine functions. Hence, reformulating $ \Theta $ as the difference of a convex function and the maximum of a finite number of smooth convex functions, and applying Algorithm 1 of \cite{PRA2017B-sta} on \eqref{l1pen_r}, we can obtain a sequence converging to a d-stationary point. 
Asymptotic convergence to d-stationary points is also established in \cite{ALT2025}, which requires reformulating $ \Theta $ as a sum of smooth functions, proximable functions and weakly concave functions.
Furthermore, since the maximum of affine functions can be represented as the composition of an absolute value function and affine functions, $\Theta$ can be reformulated as a function composed by absolute value functions and smooth functions.
For minimizing such abs-normal functions, Griewank \cite{Griewank2013} proposes a successive piecewise linearization algorithm that generates a sequence converging to a d-stationary point when the subproblems can be solved exactly.
If the subproblem is solved inexactly in the sense that increments are merely
Clarke stationary points of the subproblem, then the cluster points are only guaranteed to be Clarke stationary \cite{FWG2019}.  
Local linear and quadratic convergence rates of Griewank's algorithm are established under the assumptions of certain CQ conditions and second-order sufficient conditions, and linear growth conditions at the cluster points, respectively \cite{GW2019}. 
 
Without complicated reformulations, algorithms for finding a first-order stationary point of \eqref{l1pen_r} are mostly restricted to the case $ H \equiv 0 $. In this case, \eqref{l1pen_r} reduces to $ \min_{z} h(c(z)) $ with convex $ h $ and smooth $ c $, where d-stationarity is equivalent to both limiting stationarity and Clarke stationarity. 
Seminal works on asymptotic convergence to and evaluation complexities for first-order stationary points include \cite{Burke1985,Fletcher1981PMO,Fletcher1982,powell1983general,Y141} and \cite{CGT2011,DP2019,GST2021,Proxlinear2021,NN2025}, respectively. Notably, the residual measure in \cite{CGT2011,DP2019,GST2021,Proxlinear2021,NN2025} are different. 
\cite{CGT2011,GST2021,NN2025} measure the optimality gap by the maximum reduction attained by the linear model of the objective function over a trust region, 
while \cite{DP2019,Proxlinear2021} uses the length of trial steps.
Moreover, in \cite{NN2025}, Nabou and Necoara consider the complexity for finding an approximate first-order stationary point defined by the minimum norm of subgradients of neighboring points. 
The results on asymptotic convergence are extended to prox-regular $ h $ by Lewis and Wright \cite{LW2016} under certain conditions at accumulation points.
In an adaptive Lagrangian-based framework, Hallak and Teboulle \cite{HT2023} further demonstrate that for any $h$ with an easily computable proximal operator, the asymptotic convergence and complexity bounds remain unchanged, in the sense of first-order stationarity and residual measure defined via limiting subdifferential.

Extensive research on second-order conditions for nonsmooth optimization problems, including \eqref{l1pen} and \eqref{l1pen_r}, has been conducted since the 1970s \cite[Chapter 13]{rockafellar2009variational}. 
Among them, \cite[Proposition 9.4.2]{CP-book} offers second-order conditions for the multicomposite form of \eqref{l1pen} with $ L=1 $ and twice semidifferentiable component functions through \eqref{l1pen}.
In \cite{JWC2025}, the second-order necessary conditions for \eqref{l1pen} and its composite form are generalized to the case where $ L \geq 1 $ and the component functions are merely twice directionally differentiable.  
Furthermore, there is substantial research on second-order optimality conditions of  \eqref{l1pen_r} with $ H \equiv 0 $ \cite{Burke1987,Ioffe1979b,Penot1994a,PR1995}.
Despite the existence of abundant forms of second-order optimality conditions for \eqref{l1pen_r}, algorithms for computing its second-order stationary points are limited. 
Cartis, Gould and Toint \cite{CGT2022} study the evaluation complexities of minimizing $ ( f(x)+
h(c(x)) ) $ with smooth $ (f,c) $ and nonsmooth $ h $ to reach the high-order approximate minimizers defined by the maximum reduction of the high-order Taylor expansions of the objective function within a trust region. 
However, their approach requires computing global minimizers of high-order Taylor series within Euclidean balls. 
Additionally, \cite{CGT2022} requires $h$ to be Lipschitz continuous, subadditive, and zero at zero. Consequently, the framework excludes practical instances, such as $ \min_{x_{1},x_{2} } \, ( f(x_{1},x_{2})  + |x_{1} - [x_{2}]_{+} | ) $ with smooth $ f $.

\subsection{Contribution} 

First, we identify easy verifiable conditions under which an approximate first-order d-stationary point of \eqref{l1pen} is an approximate second-order d-stationary point of \eqref{l1pen}. 
	This result extends Remark 3.12 of \cite{JWC2025} and enables a first-order algorithm to attain an $\epsilon$-approximate second-order stationary point for \eqref{l1pen}.

Second, we propose a sequential regularized piecewise affine algorithm, SRPA, and consider the complexity bounds for finding $\epsilon$-approximate first-order and second-order d-stationary points of \eqref{l1pen_r}.
	In each iteration, SRPA constructs a piecewise affine approximate function and 
	minimizes the quadratically regularized piecewise affine function within a region defined by the previous iterate.  
    The global convergence and convergence rate of SRPA are established under certain conditions, which provides an upper bound $ \mathcal{O}(\epsilon^{-2}) $ on the iteration complexity for finding an $ \epsilon $-approximate second-order d-stationary point of \eqref{l1pen_r}.
	Specifically, our contributions regarding the algorithmic design and theoretical analysis are detailed as follows. 
	\begin{itemize}
		\item Algorithmic design. Inspired by \cite{CGT2022}, we handle the piecewise smooth $ \Theta $ by constructing regularized piecewise affine approximations. Furthermore, by integrating the active-set strategy from DCA \cite{PRA2017B-sta} and restricting the minimization to a neighborhood defined by the previous iterate, we ensure that each subproblem decomposes into a small number of strongly convex problems, thus reducing the computational cost per iteration.

		\item Theoretical analysis. Leveraging the structure of \eqref{l1pen_r}, we establish the complexity bounds required to achieve $ \epsilon $-approximate first-order and second-order d-stationarity.  
		Notably, these bounds are derived under conditions significantly weaker than second-order sufficient conditions or linear growth conditions used in \cite{GW2019}. 
		Additionally, the complexity analysis justifies the use of a relaxation parameter $ \delta $ in active-set construction.
	\end{itemize}

Finally, numerical experiments compared with state-of-the-art algorithms validate the effectiveness of SRPA for RNN training on synthetic, financial, and audio datasets.

\subsection{Organization} 
In Subsection \ref{subsec:1.3}, we introduce definitions of $ \epsilon $-approximate first-order and second-order d-stationary points of \eqref{l1pen}, and some concepts for the subsequent analysis. 
	Section \ref{sec:1d_2d} derives the relationship between approximate first-order and second-order d-stationary points of \eqref{l1pen}.   
	SRPA (Algorithm \ref{alg2}) is proposed in Section \ref{sec:r-SRPA}, where its global convergence to second-order d-stationary points and complexity bound for finding an $ \epsilon $-approximate second-order d-stationary point are established in Subsections \ref{subsec:1-d.1-r} and \ref{subsec:1-d.2-r}, respectively. 
    Numerical experiments are conducted in Section \ref{sec:4.4}.

\subsection{Notation and preliminaries}
\label{subsec:1.3}
For any differentiable vector function $f$, denote its Jacobian matrix as $J_{f}$ and define $ \nabla f (x) = J_{f}(x)^{\top} $. 
For brevity, we denote $ [m]:= \{ 1, \dots,m \} $ for any positive integer $m$, define $ c(z) :=( c_{1}(z), \dots, c_{J}(z) )^{\top} \in \R^{J} $ for all $z \in \R^{\bar{N}}  $, and use the following notations. 
 \begin{itemize}
	\item $ \mathbf{u}_{\ell} := (u_{1}^{\top}, \dots, u_{\ell}^{\top} )^{\top} $ for all $ \ell \in [L] $, $ u:= \mathbf{u}_{L} $ and $ \mathbf{u}_{0} $ is an empty placeholder. 
	\item For any direction $ d \in \R^{\bar{N}} $, break $d$ according to the blocks of $z$ as $ d= ( d_{\theta}^{\top} , d_{u_{1}}^{\top}, \dots, d_{u_{L}}^{\top} )^{\top} $ with $ d_{\theta} \in \R^{n}, d_{ u_{\ell} } \in \R^{N_{\ell}} $ for all $ \ell \in [L] $. And similarly define $ d_{\mathbf{u}_{\ell}} := (d_{u_{1}}^{\top}, \dots, d_{u_{\ell}}^{\top} )^{\top} $ for all $ \ell \in [L] $, $ d_{u}:= d_{\mathbf{u}_{L}} $ and $ d_{\mathbf{u}_{0}} $ as an empty placeholder. 
 \end{itemize}

The accumulative summation and multiplication are denoted by $ \sum $ and $\prod$, respectively.
For any sequence $ \{ a_{j} \geq 0, j\in \mathbb{Z}_{+}  \} $ and any $ j_{1},j_{2} \in \mathbb{Z}_{+}  $ with $ j_{1}>j_{2} $, denote $ \sum_{j=j_{1}}^{j_{2}} a_{j} :=0  $ and $ \prod_{j=j_{1}}^{j_{2}} a_{j} :=1  $.
For any vector sequence $ \{ u_{j}  , j\in \mathbb{Z}_{+}  \} $ and any $ j_{1},j_{2} \in \mathbb{Z}_{+}  $ with $ j_{1}>j_{2} $, denote $ (u_{j_{1}}, \dots, u_{j_{2}}  ) $ as an empty placeholder.
For any vector $a$ and positive integer $i$, $ [a]_{i} $ refers to the $i$th component of $a$.
We use $\|\cdot\|$ and $\| \cdot\|_{1}$ to denote the $\ell_2$-norm and $\ell_1$-norm of a vector or a matrix, respectively.
For any matrix $A$, we denote $\| A \|_{\rm F}$ as the Frobenius norm of $A$.
For any two sets $A,B \subseteq \R^{m} $, denote 
$ A \backslash B= \{ a \in A \mid a \notin  B \} $.
For any $ \bar{z} \in \R^{  \bar{N}  } $ and any $ r \in \R_{+} $, denote $\mathbb{B}(\bar{z};r):= \{ z\in \R^{  \bar{N}  } \mid \|  z - \bar{z} \| \leq r \} $.
For any $ m \in \mathbb{Z}_{++}, \gamma \in \R $ and any function $ f:\R^{m} \rightarrow \R\cup \{ + \infty \} $, the level set is defined as $ lev_{\leq \gamma} f:=\{ x \in \R^{m} \mid f(x) \leq \gamma \} $.
For any set $ A \subseteq \R^{m} $, denote the interior of $A$ as $ int(A) $. 
For any point $ x\in \R^{m} $ and any set $ A \subseteq \R^{m} $, denote the distance between $ x $ and $ A $ as $ \mathbf{d}(x,A):= \inf_{y\in A} \|x-y\| $.

Denote the global solution set of \eqref{l1pen} as $ \mathcal{S}_{1}:= \argmin_{ z \in \R^{\bar{N}} } \Theta(z) $. 
According to \cite[Lemma 2.2]{JWC2025}, $ \mathcal{S}_{1} $ is nonempty and compact. Hence, $  \Theta^{*}:= \min_{ z \in \R^{\bar{N}} } \Theta(z) $ is finite.
Before introducing the definitions of first-order and second-order d-stationarity, we present first-order and second-order directional derivatives as follows.
\begin{definition}[twice directional differentiability]  \label{def:dd}
	Given an open subset $ \mathcal{O} $ of $ \R^{n} $ and a scalar-valued function $ f : \mathcal{O}  \rightarrow \R $. The directional derivative of $ f $ at a point $ x\in    \mathcal{O}   $ along a direction $ d\in \R^{n} $ is defined as
	\begin{align}
		f^{\prime}(x;d):=\lim_{ \tau \downarrow 0}  \frac{f(x+\tau d ) - f(x) }{\tau}, \label{eq:1-dd}
	\end{align} if the limit exists.
	The function $ f $ is directionally differentiable at $ x $, if the limit \eqref{eq:1-dd} exists for all $ d \in \R^{n}$.
    The second-order directional derivative of $ f $ at a point $ x\in    \mathcal{O}   $ along a direction $ d\in \R^{n} $ is defined as
    \begin{align}
        f^{(2)} ( x; d ) := \lim_{ \tau \downarrow 0}  \frac{f(x+\tau d ) - f(x) - \tau f^{\prime}(x;d)  }{\tau^{2} /2 }, \label{eq:2-dd}
    \end{align}
    if the limit and the one for \eqref{eq:1-dd} exist. The function $ f $ is twice directionally differentiable at $ x $, if the limits \eqref{eq:1-dd} and \eqref{eq:2-dd} exist for all $ d \in \R^{n}$.
\\ \indent
    For a vector-valued function $ f: \mathcal{O} \rightarrow \R^{m} $ with component functions $ \{ f_{i}: \mathcal{O} \rightarrow \R, i\in [m] \} $, the directional derivative $ f^{\prime}( x;d ) $ is defined as
    $$ f^{\prime}( x;d ):=  ( f_{1}^{\prime}( x;d ) , \dots, f_{m}^{\prime}( x;d ) )^{\top}, $$
    if $ f_{i}^{\prime}( x;d ) , i\in [m]$ exist.
    Furthermore, if $ f_{i}^{(2)}( x;d ) , i\in [m]$ exist, then the second-order directional derivative  $ f^{(2)}( x;d ) $ is defined as
    $$ f^{(2)}( x;d ):=  ( f_{1}^{(2)}( x;d ) , \dots, f_{m}^{(2)}( x;d ) )^{\top} . $$
    Function $f$ is (twice) directionally differentiable at $x$, if all of its component functions are (twice) directionally differentiable at $x$.
\end{definition} 

Since $ g $ and $ \{ \psi_{\ell-1}, \ell=1,\dots,L \} $ are locally Lipschitz continuous and twice directionally differentiable, it follows from  Propositions 3.4 and 3.5 of \cite{JWC2025} that $ \Theta $ is twice directionally differentiable on $ \R^{\bar{N}} $. 
Thus, we can define first-order and second-order d-stationary points of \eqref{l1pen}, which encompasses \eqref{l1pen_r}, as follows. 
\begin{definition}[(second-order) d-stationary point]
	\label{def:D-SD}
	We call $ z \in \R^{\bar{N}} $ as a  d-stationary point of \eqref{l1pen}, if $ \Theta^{\prime}(z;d)\geq 0$ for all $ d $. 
	We call $ z \in \R^{\bar{N}} $ as a second-order d-stationary point of \eqref{l1pen}, if $ \Theta^{\prime}(z;d)\geq 0 $ for all $ d $ and $\Theta^{(2)}(z;d)\geq 0 $ for all $ d $ satisfying $\Theta^{\prime}(z;d) =  0 $.
	Denote the sets of d-stationary and second-order d-stationary points of \eqref{l1pen} as
	\begin{align*}
		&\mathcal{D}_{1}:= \{ z\in \R^{\bar{N}} \mid  \Theta^{\prime}(z;d)\geq 0 , \, \forall d\}, \\
	 	&  \mathcal{SD}_{1}:= \{ z\in \mathcal{D}_{1} \mid  \Theta^{(2)}(z;d)\geq 0 , \, \text{for all } d \text{ satisfying } \Theta^{\prime}(z;d) =  0 \} ,
	\end{align*}
	respectively.
\end{definition} 

According to \cite[Lemma 3.1]{JWC2025}, the second-order d-stationarity is necessary for the local optimality of \eqref{l1pen}, which implies that $ \mathcal{D}_{1}  \supseteq  \mathcal{SD}_{1}  \supseteq   \mathcal{S}_{1} \neq  \emptyset $. 
Correspondingly, the $ \epsilon $-approximate d-stationarity and $ \epsilon $-approximate second-order d-stationarity of \eqref{l1pen}, which encompasses \eqref{l1pen_r}, are defined as follows.
\begin{definition}[$ \epsilon $-approximate d-stationary point]
	\label{def:eps-1d}
	For any $ \epsilon>0 $, we call $ z \in \R^{\bar{N}} $ as an $ \epsilon $-approximate d-stationary point for \eqref{l1pen} if there exists $ y \in   \mathbb{B}(z;\epsilon)  $ such that
	\begin{align*}
		\Theta^{\prime}(y;d) \geq -  \epsilon, \, \forall d \text{ with } \|d\|=1.
	\end{align*} 
\end{definition}

When $  \epsilon =0 $, it implies that $ \Theta^{\prime}(z;d) \geq 0 $ for all $d$ with $ \|d\|=1 $. Together with the positive homogeneity of directional derivative in direction, it further implies that $ \Theta^{\prime}(z;d) \geq 0 $ for all $d$, i.e., $z$ is a d-stationary point of \eqref{l1pen}.

\begin{definition}[$ \epsilon $-approximate second-order d-stationary point]
	\label{def:eps-2d}
	For any $ \epsilon>0 $, we call $ z \in \R^{\bar{N}} $ as an $ \epsilon $-approximate second-order  d-stationary point for \eqref{l1pen} if there exists $ y \in   \mathbb{B}(z;\epsilon)  $ such that
	\begin{align*}
		&  \Theta^{\prime}(y;d) \geq -  \epsilon, \, \forall d \text{ with } \|d\|=1, \\
		&  \Theta^{(2)}(y;d) \geq -  \epsilon, \, \forall d \text{ with } \|d\|=1 \text{ and } \Theta^{\prime}(y;d) = -  \epsilon .
	\end{align*} 
\end{definition}

When $  \epsilon =0 $, it implies that $ \Theta^{\prime}(z;d) \geq 0 $ for all $d$ as discussed earlier. 
Similarly, it indicates that $ \Theta^{(2)}(y;d) \geq 0 $ for all $d$ with $ \|d\|=1$ and $ \Theta^{\prime}(y;d) = 0 $. 
Together with the positive homogeneity of $ \Theta^{\prime}(y;d) $ in $d$ and the positive homogeneity of degree $2$ of $ \Theta^{(2)}(y;d) $ in $d$, it further implies that $ \Theta^{(2)}(y;d) \geq 0 $ for all $d$ with $  \Theta^{\prime}(y;d) = 0 $, i.e., $z$ is a second-order d-stationary point of \eqref{l1pen}.

The following is the definition of piecewise affine functions. 
\begin{definition}[piecewise affine function, Definition 2.47 of \cite{rockafellar2009variational}]
    \label{def:PA}
    A function $ f:\mathcal{D} \subseteq \R^{n} \rightarrow \R $ is piecewise affine (PA),
	if $ \mathcal{D} $ can be represented as the union of finitely many polyhedral sets, relative to each of which $ f(x) $ is given by an expression of the form $ a^{\top} x + b $ for some $ a \in \R^{n},b\in \R $.
\end{definition} 
For continuous piecewise affine $ H $, there exists a finite number of polyhedra $  \{ P_{i} \}_{i=1}^{I}  $ and the corresponding $  \{ a_{i} \in \R^{\bar{N}},b_{i} \in \R \}_{i=1}^{I} $ such that $  \{ P_{i} \}_{i=1}^{I}  $ are nonempty and mutually distinct, $ \R^{\bar{N}} = \cup_{i \in [I]} P_{i} $ and relative to each $ P_{i} $, $ H(z)=a_{i}^{\top} z + b_{i} $.

\section{Relationship between first-order and second-order d-stationarity}
\label{sec:1d_2d}
For any $ \epsilon \in \R_{+} $, denote
\begin{equation}
	\label{def:eD_eSD}
	\begin{aligned}
	&\mathcal{D}_{1}^{\epsilon}:= \left\{  z \in \R^{\bar{N}} \mid \Theta^{\prime}(z;d) \geq -\epsilon , \forall d \text{ with } \|d\|=1 \right\}, \\
	&\mathcal{SD}_{1}^{\epsilon}:= \left\{  z \in \mathcal{D}_{1}^{\epsilon} \mid \Theta^{(2)}(z;d) \geq -\epsilon , \forall d \text{ with } \|d\|=1 \text{ and } \Theta^{\prime}(z;d) = -\epsilon \right\}.
\end{aligned}
\end{equation} 
Then we have $ \mathcal{D}_{1} = \mathcal{D}_{1}^{0} $ and $ \mathcal{SD}_{1} = \mathcal{SD}_{1}^{0}  $ from the positive homogeneity of $ \Theta^{\prime}(z;\cdot) $ and $ \Theta^{(2)}(z;\cdot) $. 
This section will analyze the relationship between $ \mathcal{D}_{1}^{\epsilon} $ and $ \mathcal{SD}_{1}^{\epsilon} $ for any $ \epsilon \in [0,1) $.

Firstly, it follows from \cite{JWC2025} that under certain conditions, any d-stationary point of \eqref{l1pen} is a second-order d-stationary point of \eqref{l1pen}.

\begin{corollary}
	\label{cor:D-SD}
    Set $ \{ \beta_{\ell}, \, \ell \in [L] \} $ satisfying 
	\begin{align} \label{eq:br-threshold-2}
				\beta_{\ell} >  K_{g}  \prod_{j=\ell+1}^{L} ( 1+ K_{j-1} ) , \forall \ell \in [L],
    \end{align}
	where $ K_g  $ and $ \{ K_{\ell} , \ell \in [L-1] \} $ are the positive constants satisfying  
	\begin{equation} 
		\label{eq:Lip}
		\begin{aligned}
			&| g^{\prime}(u;d_{u}) - g^{\prime}(u;\bar{d}_{u}) | \leq K_{g} \|d_{u}-\bar{d}_{u}  \|, \\
			& \| \psi_{\ell-1}^{\prime}( \theta, \mathbf{u}_{\ell-1} ; d_{\theta},  d_{ \mathbf{u}_{\ell-1} }  ) - \psi_{\ell-1}^{\prime}( \theta, \mathbf{u}_{\ell-1} ; \bar{d}_{\theta},  \bar{d}_{ \mathbf{u}_{\ell-1} }  ) \| 
			\\
				& ~
				\leq K_{\ell-1} \|  d_{ \mathbf{u}_{\ell-1} } - \bar{d}_{ \mathbf{u}_{\ell-1} }     \|,  \, \ell=2,\dots,L,
		\end{aligned}
	\end{equation}
	for any $ d,\bar{d} \in \R^{\bar{N}} $ with $ d_{\theta} = \bar{d}_{\theta} $, and any $z  \in lev_{\leq \Theta(z^{0}) } \Theta $ with 
	\begin{align}
		\label{def:z0}
		z^{0}:= ( \mathbf{0}, (u_{1}^{0})^{\top}, \dots, (u_{L}^{0})^{\top} )^{\top},~ 
		u_{\ell}^{0}:= \psi_{\ell-1} (\mathbf{0}, (u_{1}^{0})^{\top}, \dots,  u_{\ell-1}^{0} ), \, \forall \ell \in [L].
	\end{align} 
    Assuming that $g$ is convex, then we have $ \mathcal{D}_{1} \cap lev_{\leq \Theta(z^{0}) } \Theta \subseteq \mathcal{SD}_{1} $.
\end{corollary}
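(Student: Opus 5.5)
The plan is to show that at any $z\in\mathcal{D}_{1}\cap lev_{\leq \Theta(z^{0})}\Theta$, a direction $d$ with $\Theta^{\prime}(z;d)=0$ must have a very rigid structure. For such directions we will verify $\Theta^{(2)}(z;d)\geq 0$ directly.

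\textbf{Step 1: directional derivative.} By the chain rules for directional derivatives (Propositions 3.4 and 3.5 of \cite{JWC2025}),
\[
\Theta^{\prime}(z;d)=g^{\prime}(u;d_{u})+2\lambda\theta^{\top}d_{\theta}+\sum_{\ell=1}^{L}\beta_{\ell}\,\phi_{\ell}^{\prime}(z;d),
\qquad \phi_{\ell}(z):=\|u_{\ell}-\psi_{\ell-1}(\theta,\mathbf{u}_{\ell-1})\|_{1}.
\]
The $\ell_{1}$ term contributes $\sum_{i}|[d_{u_\ell}-\psi_{\ell-1}^{\prime}(\cdot;d_\theta,d_{\mathbf{u}_{\ell-1}})]_{i}|$ on the indices where the residual vanishes. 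On the remaining indices it contributes a signed linear term.

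\textbf{Step 2: the key reduction.} Fix $d$ with $\Theta^{\prime}(z;d)=0$. Define $\bar d$ by $\bar d_{\theta}=d_{\theta}$ and, recursively in $\ell$, $\bar d_{u_\ell}$ chosen so that the $\ell$-th constraint residual direction matches $\psi^{\prime}_{\ell-1}(\cdot;\bar d_\theta,\bar d_{\mathbf{u}_{\ell-1}})$ on the active indices. On the inactive indices we keep the same values as $d_{u_\ell}$.

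Using \eqref{eq:Lip} and induction over layers, $\|d_{\mathbf{u}_\ell}-\bar d_{\mathbf{u}_\ell}\|$ is controlled by $\sum_{j\le\ell}\prod_{k=j+1}^{\ell}(1+K_{k-1})\,\|r_{j}\|_{1}$, where $r_{j}$ is the layer-$j$ active residual in direction $d$. Combining this with the $K_g$-Lipschitz bound on $g^{\prime}(u;\cdot)$ and with \eqref{eq:br-threshold-2} gives
\[
\Theta^{\prime}(z;d)-\Theta^{\prime}(z;\bar d)\;\ge\;\sum_{\ell}\Big(\beta_\ell-K_g\prod_{j=\ell+1}^{L}(1+K_{j-1})\Big)\|r_\ell\|_{1}.
\]
Here the inactive contributions also need to be accounted for. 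They are exactly the terms that change linearly, and I expect they must be handled by choosing $\bar d$ so that they cancel. Since $\Theta^{\prime}(z;\bar d)\ge 0$ by d-stationarity, $\Theta^{\prime}(z;d)=0$ forces all $r_\ell=0$. This is the analogue of Remark 3.12 of \cite{JWC2025}, and the level-set restriction is used only so that the constants in \eqref{eq:Lip} apply at $z$.

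\textbf{Step 3: second order.} If all $r_\ell=0$, then along $z+\tau d$ the active penalty components behave to second order like $|\tau^{2}(\cdot)/2|\ge 0$. Hence $\Theta^{(2)}(z;d)$ equals $g^{(2)}(u;d_u)+2\lambda\|d_\theta\|^{2}$, plus the second-order terms from the inactive (smooth, sign-fixed) components, plus nonnegative absolute-value terms $\beta_\ell\sum_{i\,\rm active}|[\,\cdot\,]^{(2)}_i|$.

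The crucial point is to show that the second-order contribution of the inactive and $\psi$-curvature terms is dominated. I would mimic the first-order argument at second order. Convexity of $g$ gives $g^{(2)}(u;d_u)\ge 0$ in the direction sense, since $g(u+\tau d_u)-g(u)-\tau g^{\prime}(u;d_u)\ge 0$. The terms $\beta_\ell\,\mathrm{sign}\cdot\psi^{(2)}_{\ell-1,i}$ on inactive indices are to be absorbed using the same $\beta$-threshold. The argument is that, by \eqref{eq:br-threshold-2}, the signs on the inactive coordinates are fixed at a d-stationary point such that the Lagrange-type multipliers $\beta_\ell\,\mathrm{sign}(\cdot)$ cancel $\nabla g$. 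This lets the second-order terms reorganize into $g$ evaluated along the feasible curve, which is nonnegative by convexity.

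I expect this last step, controlling the curvature of $\psi_{\ell-1}$ through the inactive components at second order, to be the main obstacle. I would first try to invoke the corresponding second-order result of \cite{JWC2025} (Remark 3.12 or its underlying theorem) directly. The corollary would then follow by checking that its hypotheses are precisely \eqref{eq:br-threshold-2}, \eqref{eq:Lip} and convexity of $g$.
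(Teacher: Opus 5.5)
\textbf{There is a genuine gap, and it sits where you anticipated trouble.} The missing idea is that at any $z\in\mathcal{D}_{1}\cap lev_{\leq \Theta(z^{0})}\Theta$ there are \emph{no} inactive coordinates. That is, $z\in\mathcal{F}_{0}$, so $I_{+}^{\ell}(z)=I_{-}^{\ell}(z)=\emptyset$ for all $\ell$.

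Without this, Step 3 cannot be closed. A coordinate $i\in I_{\pm}^{\ell}(z)$ contributes $\mp\beta_{\ell}[\psi_{\ell-1}^{(2)}(\theta,\mathbf{u}_{\ell-1};d_{\theta},d_{\mathbf{u}_{\ell-1}})]_{i}$ to $\Theta^{(2)}(z;d)$. This term has arbitrary sign. Nothing in \eqref{eq:br-threshold-2} or \eqref{eq:Lip} controls it, since there is no bound on $\psi^{(2)}$, and convexity of $g$ does not touch it. Your proposed mechanism (multipliers $\beta_{\ell}\,\mathrm{sign}(\cdot)$ cancelling $\nabla g$, so that everything reorganizes into $g$ along a feasible curve) is not an argument. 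If $z\notin\mathcal{F}_{0}$, there is no feasible curve through $z$.

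Step 2, by contrast, is repairable along the lines you guessed. Keeping $\bar d_{u_{\ell}}=d_{u_{\ell}}$ on inactive coordinates fails, because the signed terms then change by an indefinite amount proportional to $\beta_{\ell}$, which \eqref{eq:br-threshold-2} does not dominate. Instead, on those coordinates set $[\bar d_{u_{\ell}}]_{i}:=[d_{u_{\ell}}-\psi_{\ell-1}^{\prime}(\theta,\mathbf{u}_{\ell-1};d_{\theta},d_{\mathbf{u}_{\ell-1}})+\psi_{\ell-1}^{\prime}(\theta,\mathbf{u}_{\ell-1};\bar d_{\theta},\bar d_{\mathbf{u}_{\ell-1}})]_{i}$. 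This leaves those terms unchanged, and your recursion for $\|d_{u}-\bar d_{u}\|$ then goes through.

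\textbf{How to prove feasibility.} Use backward induction over layers, exactly as in Lemma~\ref{lem:eD1-F0} with $\epsilon=0$; there the extra $+1$ of \eqref{eq:br-threshold-3} is not needed (cf.\ Lemma 3.8 of \cite{JWC2025}). Suppose layers $\ell+1,\dots,L$ are already feasible but $u_{\ell}\neq\psi_{\ell-1}(\theta,\mathbf{u}_{\ell-1})$. Take $d_{\theta}=\mathbf{0}$, $d_{\mathbf{u}_{\ell-1}}=\mathbf{0}$, $d_{u_{\ell}}=\psi_{\ell-1}(\theta,\mathbf{u}_{\ell-1})-u_{\ell}$, and $d_{u_{j}}=\psi_{j-1}^{\prime}(\theta,\mathbf{u}_{j-1};d_{\theta},d_{\mathbf{u}_{j-1}})$ for $j>\ell$. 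Then $\Theta^{\prime}(z;d)=g^{\prime}(u;d_{u})-\beta_{\ell}\|d_{u_{\ell}}\|_{1}\leq (K_{g}\prod_{j=\ell+1}^{L}(1+K_{j-1})-\beta_{\ell})\|d_{u_{\ell}}\|<0$, contradicting $z\in\mathcal{D}_{1}$. As you said, this is where the level set enters.

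\textbf{Finishing the proof.} With $z\in\mathcal{F}_{0}$, Step 2 becomes the argument of Lemma~\ref{lem:eD1-sd2}(a) at $\epsilon=0$. It yields $d\in\mathcal{T}_{\mathcal{F}_{0}}(z)$ whenever $\Theta^{\prime}(z;d)=0$, so every index lies in $I_{0,0}^{\ell}(z;d)$. Hence $\Theta^{(2)}(z;d)=F^{(2)}(z;d)+\sum_{\ell=1}^{L}\beta_{\ell}\|\psi_{\ell-1}^{(2)}(\theta,\mathbf{u}_{\ell-1};d_{\theta},d_{\mathbf{u}_{\ell-1}})\|_{1}\geq g^{(2)}(u;d_{u})+2\lambda\|d_{\theta}\|^{2}\geq 0$. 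The last inequality is your (correct) convexity argument, and no curvature of $\psi$ is left to control.

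This simplification is what the paper's proof imports from Remark 3.12 of \cite{JWC2025}, so your fallback of citing that remark is the paper's route. What your self-contained version lacks is the feasibility step that makes the simplification valid.
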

\begin{proof}
	For any $ z \in \mathcal{D}_{1} \cap lev_{\leq \Theta(z^{0}) } \Theta $, Remark 3.12 of \cite{JWC2025} implies that under \eqref{eq:br-threshold-2}, $ \Theta^{(2)}(z;d ) $ can be simplified as
	\begin{align}\label{eq:sim_T'2}
		\Theta^{(2)}(z;d )= F^{(2)}(z;d) + \sum_{\ell \in [L]} \beta_{\ell} \| \psi_{\ell-1}^{(2)} ( \theta, \mathbf{u}_{\ell-1} ; d_{\theta} , d_{\mathbf{u}_{\ell-1}} ) \|_{1}
	\end{align}
	for all $d$ with $ \Theta^{\prime}(z;d ) =0 $.
	Since $ g $ is convex and $ F(z)= g(u) + \lambda \|\theta \|^{2} $ for all $ z = ( \theta^{\top}, u^{\top} )^{\top} $, it follows from \eqref{eq:sim_T'2} that for all $d$ with $ \Theta^{\prime}(z;d ) =0 $,
	\begin{align*}
		\Theta^{(2)}(z;d ) \geq F^{(2)}(z;d) \geq 0,
	\end{align*}
	which completes the proof.
   
\end{proof}
  
However, under the same inequalities as \eqref{eq:br-threshold-2}, the convexity of $F$ and the condition that $ \Theta(z ) \leq \Theta(z^{0}) $ cannot guarantee that 
$ \mathcal{D}_{1}^{\epsilon} \cap lev_{\leq \Theta(z^{0}) } \Theta \subseteq \mathcal{SD}_{1}^{\epsilon} $ for $ \epsilon>0 $.
The key obstacle is to obtain  
\begin{align}
	\label{def:Feas0}
	 y \in \mathcal{F}_{0}:=\lbrace z \in \R^{\bar{N}} \mid u_{\ell}=\psi_{\ell-1}( \theta,  \mathbf{u}_{\ell-1} ), \ell \in [L] \rbrace
\end{align}
from $ y \in \mathcal{D}_{1}^{\epsilon} $.
Next we show that it can be overcome under a stronger version of \eqref{eq:br-threshold-2}.
\begin{lemma}\label{lem:eD1-F0}
    Let $ \{ \beta_{\ell}>0, \ell\in [L] \} $ satisfy
    \begin{align} \label{eq:br-threshold-3}
				\beta_{\ell} >  ( K_{g} +1 )   \prod_{j=\ell+1}^{L} ( 1+ K_{j-1} ) , \text{ for all } \ell \in [L],
    \end{align}
	where $ K_g  $ and $ \{ K_{\ell} , \ell \in [L-1] \} $ are the positive constants satisfying \eqref{eq:Lip}. For any $ \epsilon \in [0,1) $, if $z$ satisfies 
	\begin{align}
		\label{eq:con}
		\Theta^{\prime}(z;d) \geq -  \epsilon, \, \forall d \text{ with } \|d\|=1 , 
		\text{ and } \Theta(z) \leq   \Theta(z^{0}), 
	\end{align}
	then we have $ z \in \mathcal{F}_{0} $, where $ z^{0} $ and $ \mathcal{F}_{0} $ are defined in \eqref{def:z0} and \eqref{def:Feas0}, respectively.
\end{lemma}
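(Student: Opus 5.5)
We argue by contradiction and build a single descent direction $d$ that "repairs" the first violated constraint layer. Suppose $z$ satisfies \eqref{eq:con} but $z \notin \mathcal{F}_{0}$. Let $\ell_{0}$ be the largest index with $u_{\ell_{0}} \neq \psi_{\ell_{0}-1}(\theta,\mathbf{u}_{\ell_{0}-1})$. Alternatively, take the smallest such index and propagate forward; the choice is fixed below.

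The direction is built recursively. Set $d_{\theta}=0$, $d_{u_{\ell}}=0$ for $\ell<\ell_{0}$, and
\[
d_{u_{\ell_{0}}} = -\,\mathrm{sign}\bigl(u_{\ell_{0}}-\psi_{\ell_{0}-1}(\theta,\mathbf{u}_{\ell_{0}-1})\bigr)\ \text{restricted to the nonzero components}.
\]
For $\ell>\ell_{0}$, set $d_{u_{\ell}} = \psi_{\ell-1}^{\prime}(\theta,\mathbf{u}_{\ell-1};0,d_{\mathbf{u}_{\ell-1}})$ on the components where the residual $u_{\ell}-\psi_{\ell-1}$ vanishes. On components where the residual is nonzero, the $|\cdot|$ term is differentiable, and we also set them equal to the same value, so each later layer contributes at most $\beta_\ell\cdot 0$ on zero components. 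Better: take $\ell_{0}$ to be the \emph{smallest} violated index. Then all residuals for $\ell<\ell_{0}$ vanish, and with $d_{\mathbf{u}_{\ell}}=0$ for $\ell<\ell_0$ those terms contribute $0$.

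We then compute $\Theta^{\prime}(z;d)$ by the chain rule for directional derivatives. The $\ell_{0}$ term gives $-\beta_{\ell_{0}}\|d_{u_{\ell_{0}}}\|_{1}$ (note $\psi_{\ell_0-1}$ does not depend on $d_{u_{\ell_0}}$). Each term with $\ell>\ell_{0}$ gives $\beta_\ell\| d_{u_\ell}-\psi'_{\ell-1}(\cdot;0,d_{\mathbf u_{\ell-1}})\|_1$ on zero-residual components, which is $0$ by construction. On nonzero-residual components it gives a signed linear term. To avoid that, choose $d_{u_\ell}$ there also equal to $\psi'_{\ell-1}$, so the contribution is $0$ as well. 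The $g$ term is bounded by $|g^{\prime}(u;d_{u})|\le K_{g}\|d_{u}\|$ using \eqref{eq:Lip} with $\bar d=0$. The recursive bounds from \eqref{eq:Lip} with $d_\theta=0$ give
\[
\|d_{\mathbf{u}_{\ell}}\|\le \prod_{j=\ell_{0}+1}^{\ell}(1+K_{j-1})\,\|d_{u_{\ell_{0}}}\|,
\]
hence $\|d_u\|\le \prod_{j=\ell_0+1}^{L}(1+K_{j-1})\|d_{u_{\ell_0}}\|$. Combining these with $\|d_{u_{\ell_0}}\|\le\|d_{u_{\ell_0}}\|_1$,
\[
\Theta^{\prime}(z;d)\le -\Bigl(\beta_{\ell_{0}}-K_{g}\textstyle\prod_{j=\ell_{0}+1}^{L}(1+K_{j-1})\Bigr)\|d_{u_{\ell_{0}}}\|_{1}
< -\textstyle\prod_{j=\ell_0+1}^{L}(1+K_{j-1})\,\|d_{u_{\ell_0}}\|_{1}
\]
by \eqref{eq:br-threshold-3}.

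Next we normalize. We have $\|d\|=\|d_u\|\le \prod_{j=\ell_0+1}^{L}(1+K_{j-1})\|d_{u_{\ell_0}}\|_1$. Positive homogeneity then gives $\Theta^{\prime}(z;d/\|d\|)<-1\le-\epsilon$, contradicting \eqref{eq:con} since $\epsilon<1$. The extra $+1$ in \eqref{eq:br-threshold-3} is exactly what absorbs the normalization. The hypothesis $\Theta(z)\le\Theta(z^0)$ is needed only to ensure $z$ lies in the level set where the constants in \eqref{eq:Lip} are valid.

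The main obstacle is the chain rule step for the $\ell_1$ terms with $\ell>\ell_{0}$. On components with nonzero residual, the directional derivative of $|\cdot|$ is linear in the increment, and on zero components it is $|\cdot|$ of the increment. Choosing $d_{u_\ell}=\psi'_{\ell-1}(\theta,\mathbf u_{\ell-1};0,d_{\mathbf u_{\ell-1}})$ on every component makes both cases vanish. One must check that this choice is legitimate, i.e.\ that the directional derivative of the composition $u_\ell-\psi_{\ell-1}$ equals $d_{u_\ell}-\psi'_{\ell-1}(\cdot;\cdot)$. This follows from directional differentiability and local Lipschitz continuity of $\psi_{\ell-1}$, i.e.\ Hadamard directional differentiability.
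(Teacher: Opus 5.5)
Your proof is correct and is essentially the paper's argument. The paper uses the same direction ($d_\theta=\mathbf 0$, zero on the blocks below the violated layer, a repairing step on the violated block, and $d_{u_j}=\psi_{j-1}^{\prime}(\theta,\mathbf u_{j-1};d_\theta,d_{\mathbf u_{j-1}})$ above it), the same bound $\|d_u\|\le\prod_{j=\ell+1}^{L}(1+K_{j-1})\|d_{u_\ell}\|$ from \eqref{eq:Lip}, and the same use of the extra $+1$ in \eqref{eq:br-threshold-3} to beat $-\epsilon\|d\|$. The only differences are cosmetic: the paper sweeps $\ell=L,\dots,1$ and repairs with $d_{u_\ell}=\psi_{\ell-1}(\theta,\mathbf u_{\ell-1})-u_\ell$ instead of minus the sign of the residual, and, as you observed, the propagated direction annihilates every other layer's $\ell_1$ term regardless of residual signs, so a single violated index suffices.
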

\begin{proof}
    To show $ z \in \mathcal{F}_{0} $, we will prove $ u_{\ell} = \psi_{\ell-1} ( \theta, u_{1}, \dots, u_{\ell-1} ) $ in the order of $ \ell=L,\dots,1 $ similar to Lemma 3.8 of \cite{JWC2025}. For all $ \ell \in [L] $, denote 
	\begin{equation}
		\label{eq:I_+-0}
		\begin{aligned}
			&
			I_{+}^{\ell}(z):=\{ i \in [ N_{\ell}] \mid [ u_{\ell} - \psi_{\ell-1}( \theta, \mathbf{u}_{\ell-1} ) ]_{i}  >0 \}, \\
			&
			I_{-}^{\ell}(z):=\{ i \in [ N_{\ell}] \mid [ u_{\ell} - \psi_{\ell-1}( \theta, \mathbf{u}_{\ell-1} ) ]_{i}  <0 \} ,  \\
			&
			I_{0}^{\ell}(z):= [N_{\ell}] \backslash \left( I_{+}^{\ell}(z)  \cup I_{-}^{\ell}(z) \right)  . 
		\end{aligned}
	\end{equation} 
	To show $ u_{L} = \psi_{L-1} ( \theta, u_{1}, \dots, u_{L-1} ) $ by contradiction, we use $ I_{+}^{L}(z) , I_{-}^{L}(z) $ and $I_{0}^{L}(z)$ defined in \eqref{eq:I_+-0}.  
	Suppose $ u_{L} \neq \psi_{L-1} ( \theta, u_{1}, \dots, u_{L-1} ) $, then $ I_{+}^{L}(z) \cup I_{-}^{L}(z)  \neq \emptyset $. Let $ \bar{z}:=( \bar{\theta}^{\top},  \bar{u}_{1}^{\top}, \dots, \bar{u}_{L}^{\top} )^{\top} $ with
    \begin{align*}
        \bar{\theta}:=\theta, ~
        \bar{u}_{1} := u_{1}, ~ \dots, ~ \bar{u}_{L-1} := u_{L-1}, 
        ~ \bar{u}_{L} : = \psi_{L-1} ( \theta, u_{1}, \dots, u_{L-1} ) \neq u_{L},
    \end{align*}
    and $ d:= \bar{z} -z= ( \mathbf{0}, \dots, \mathbf{0}, d_{u_{L}} ) \neq \mathbf{0} $. Then it follows from \cite[Proposition 3.4]{JWC2025} 
    that
    \begin{align}
        \Theta^{\prime}( z;d )
        &= F^{\prime}(u;d_{u} ) + \beta_{L} \left(
        \sum_{i \in I_{+}^{L}(z) } [ d_{u_{L}} ]_{i}  -  \sum_{i \in I_{-}^{L}(z) } [ d_{u_{L}} ]_{i} + \sum_{i \in I_{0}^{L}(z) } | [ d_{u_{L}} ]_{i} |
        \right) \notag \\
        &= g^{\prime}(u;d_{u} ) - \beta_{L}  \| d_{u_{L}} \|_{1} \label{eq:4-0.5new} \\
        &\leq ( K_{g}  - \beta_{L} )  \| d_{u_{L}} \|  , \notag \\
		&  <   - \| d_{u_{L}} \| , \notag  
	\end{align}
    where the two equalities use the definitions of $d$ and $ I_{+}^{L}(z), I_{-}^{L}(z), I_{0}^{L}(z)  $, the first inequality holds due to \eqref{eq:Lip} at $ \bar{d}=\mathbf{0} $ and $ \|\cdot\| \leq \|\cdot\|_{1} $, and the last inequality follows from \eqref{eq:br-threshold-3} and $ d_{u_{L}} \neq \mathbf{0} $. However, it follows from the positive homogeneity of $ \Theta^{\prime}(z;\cdot) $ and $ \Theta^{\prime}(z;\tilde{d}) \geq -  \epsilon $ for all $\tilde{d}$ with $ \|\tilde{d}\|=1 $ that 
	\begin{align*}
		\Theta^{\prime}(z;d) 
		= \Theta^{\prime}\left(z; \frac{d}{\|d\|} \right)  \|d\| 
		\geq -  \epsilon  \|d\| 
		= -  \epsilon  \|d_{u_{L}}\|
		> - \|d_{u_{L}}\|,
	\end{align*}
	where the last inequality comes from $ d_{u_{L}} \neq \mathbf{0} $ and $ \epsilon \in [0,1) $. It contradicts \eqref{eq:4-0.5new}, which implies that $ u_{L} = \psi_{L-1} ( \theta, u_{1}, \dots, u_{L-1} ) $.  

    For any $ \ell=L-1,\dots,1 $, we next show $ u_{\ell} = \psi_{\ell-1} ( \theta, u_{1}, \dots, u_{\ell-1} ) $ using $ I_{+}^{\ell}(z) $, $ I_{-}^{\ell}(z) $ and $I_{0}^{\ell}(z)$ in \eqref{eq:I_+-0}.  
    Suppose $ u_{\ell} \neq \psi_{\ell-1} ( \theta, u_{1}, \dots, u_{\ell-1} ) $, then $ I_{+}^{\ell}(z) \cup I_{-}^{\ell}(z) \neq \emptyset $. Let $ \bar{z}:=( \bar{\theta}^{\top},  \bar{u}_{1}^{\top}, \dots, \bar{u}_{L}^{\top} )^{\top} $ with
    \begin{align*}
        &\bar{\theta}:=\theta, ~
        \bar{u}_{1} := u_{1}, ~ \dots, ~ \bar{u}_{\ell-1} := u_{\ell-1}, 
        ~ \bar{u}_{\ell} = \psi_{\ell-1} ( \theta, u_{1}, \dots, u_{\ell-1} ) \neq u_{\ell}, \\
        & \bar{u}_{\ell+1}:= u_{\ell+1} + \psi_{\ell}^{\prime} ( \theta, u_{1}, \dots, u_{\ell}; \mathbf{0}, \dots, \mathbf{0}, \bar{u}_{\ell} - u_{\ell}  ), \\
        & \vdots \\
        & \bar{u}_{L}:= u_{L} + \psi_{L-1}^{\prime} ( \theta, u_{1}, \dots, u_{L-1}; \mathbf{0}, \dots, \mathbf{0}, \bar{u}_{\ell} - u_{\ell} , \dots , \bar{u}_{L-1} - u_{L-1}  ),
    \end{align*}
    and $ d:= \bar{z} -z= ( \mathbf{0}, \dots, \mathbf{0}, d_{ u_{\ell} }, \dots, d_{u_{L}} ) \neq \mathbf{0} $. Then it can be checked that
    \begin{equation}
        \label{eq:4-0new}
        \begin{aligned}
            d_{u_{\ell+1}} = \psi_{\ell}^{\prime} ( \theta,  \mathbf{u}_{\ell}; d_{\theta} ,   d_{ \mathbf{u}_{\ell} }  ), ~
            \dots, ~
            d_{u_{L}} = \psi_{L-1}^{\prime} ( \theta, \mathbf{u}_{L-1}; d_{\theta} ,  d_{ \mathbf{u}_{L-1} }  ).
        \end{aligned}
    \end{equation}
    Together with \cite[Proposition 3.4]{JWC2025}, 
    it implies that
    \begin{align}
        \Theta^{\prime}( z;d )
        &= F^{\prime}(u;d_{u} )  + \beta_{\ell}  \left(
        \sum_{i \in I_{+}^{\ell}(z) } [ d_{u_{\ell}} ]_{i}  -  \sum_{i \in I_{-}^{\ell}(z) } [ d_{u_{\ell}} ]_{i} + \sum_{i \in I_{0}^{\ell}(z) } | [ d_{u_{\ell}} ]_{i} | \right) \notag \\
        &= g^{\prime}(u;d_{u} )  - \beta_{\ell} \| d_{u_{\ell}} \|_{1} \notag \\
        &\leq K_{g} \|d_{u} \| - \beta_{\ell} \| d_{u_{\ell}} \|_{1}, \label{eq:4-1new}
    \end{align}
    where the two equalities use the definitions of $d$ and $ I_{+}^{\ell}(z), I_{-}^{\ell}(z), I_{0}^{\ell}(z)  $, the inequality holds due to \eqref{eq:Lip} at $ \bar{d}=\mathbf{0} $. Next we give an upper bound of $ \| d_{u} \| $ by estimating $ \{ d_{u_{j}} , j \in [L] \}$. Since $ d_{u_{1}}= \mathbf{0}, \dots, d_{u_{\ell-1}}= \mathbf{0} $, we only need to analyze $ \{ d_{u_{j}} , j = \ell,\dots,L \}$.
    For $j=\ell$, it follows from the definitions of $ d_{u_{\ell}} $ and $ \bar{u}_{\ell} $ that
    \begin{align}
        \label{eq:4-2new}
        \| d_{u_{\ell}} \| =  \| u_{\ell} - \psi_{\ell-1} ( \theta, u_{1}, \dots,u_{\ell-1} ) \| \neq  0.
    \end{align}
    For $ j=\ell+1 $, it follows from \eqref{eq:4-0new} that
    \begin{align}
        \label{eq:4-3new}
        \| d_{u_{\ell+1}} \|
        = \| \psi_{\ell}^{\prime} ( \theta, \mathbf{u}_{\ell}; d_{\theta} ,   d_{ \mathbf{u}_{\ell} }  ) \|
         \leq K_{\ell} (   \| d_{u_{1}}\| +  \dots +  \| d_{ u_{\ell} } \| )
         = K_{\ell} \| d_{ u_{\ell} } \|,
    \end{align}
    where the inequality holds due to \eqref{eq:Lip} at $ \bar{d}=\mathbf{0}, d_{\theta}=\mathbf{0} $,
    the last equality uses $ d_{u_{1}}= \mathbf{0}, \dots, d_{u_{\ell-1}}= \mathbf{0} $. And for $ j\geq \ell+2 $, assume that
    \begin{align}\label{eq:4-4new}
        \|d_{p}\| \leq \left( K_{p-1}\prod_{i=\ell+1}^{p-1} (1+ K_{i-1} ) \right) \|d_{u_{\ell}} \|
    \end{align}
    for all $ p=\ell+1, \dots, j-1 $.
    Then we can obtain that \eqref{eq:4-4new} also holds at $ p=j $:
    \begin{align*}
        \|d_{u_{j}}\|
        = \| \psi_{j-1}^{\prime} ( \theta, \mathbf{u}_{j-1}; d_{\theta} ,   d_{ \mathbf{u}_{j-1} }  ) \|  
        &\leq   K_{j-1} (   \| d_{u_{1}}\| +  \dots +  \| d_{ u_{j-1} } \| )  \\
        &=  K_{j-1} (   \| d_{u_{\ell}}\|+ \| d_{u_{\ell+1}}\| +  \dots +  \| d_{ u_{j-1} } \| ) \notag \\
        & \leq \left( K_{j-1}\prod_{i=\ell+1}^{j-1} (1+ K_{i-1} ) \right) \|d_{u_{\ell}} \| ,\notag
    \end{align*}
    where the first inequality holds due to \eqref{eq:Lip} at $ \bar{d}=\mathbf{0}, d_{\theta}=\mathbf{0} $,
    the second equality uses $  d_{u_{1}}= \mathbf{0}, \dots, d_{u_{\ell-1}}= \mathbf{0} $, and the second inequality uses \eqref{eq:4-4new} at $ p=\ell+1, \dots, j-1 $. By induction and \eqref{eq:4-3new}, it implies that \eqref{eq:4-4new} holds for all $ p=\ell+1,\dots,L $. 
	Based on these upper bounds for $ \{ \| d_{u_{j}} \| , j \in [L] \}$, we have 
	\begin{align}
		\|d_{u}\| 
		 &= \| ( \mathbf{0}, \dots, \mathbf{0}, d_{ u_{\ell} }, \dots, d_{u_{L}} ) \| \notag \\
		&\leq \|d_{u_{\ell}} \| + \|d_{u_{\ell+1}} \| + \dots + \| d_{u_{L}} \| \notag \\
		& \leq  \left( 1 + K_{\ell}
               + \dots +  K_{L-1}\prod_{i=\ell+1}^{L-1} (1+ K_{i-1} )  \right) \|d_{u_{\ell}} \| \notag \\
		& = \prod_{i=\ell+1}^{L} (1+ K_{i-1} )   \|d_{u_{\ell}} \|. \label{eq:d-bound}
	\end{align} 
	Applying \eqref{eq:d-bound} and $ \|\cdot \| \leq \|\cdot\|_{1} $ in \eqref{eq:4-1new}, we have
    \begin{align}
        \Theta^{\prime}( z;d )
        \leq  K_{g}   \|d_{u} \|  -\beta_{\ell} \|d_{u_{\ell}} \|   
        &\leq  \left( K_{g} \prod_{i=\ell+1}^{L} (1+ K_{i-1} )    - \beta_{\ell}  \right) \|d_{u_{\ell}} \| \notag \\
        & < - \prod_{i=\ell+1}^{L} (1+ K_{i-1} ) \|d_{u_{\ell}} \| , \label{eq:2.26}
    \end{align}
    where the last inequality uses \eqref{eq:br-threshold-3} and \eqref{eq:4-2new}. However, it follows from the positive homogeneity of $ \Theta^{\prime}(z;\cdot) $ and $ \Theta^{\prime}(z;\tilde{d}) \geq -  \epsilon $ for all $\tilde{d}$ with $ \|\tilde{d}\|=1 $ that 
	\begin{align}
		\Theta^{\prime}(z;d) 
		= \Theta^{\prime}\left(z; \frac{d}{\|d\|} \right)  \|d\| 
		\geq -  \epsilon  \|d\| 
		= -  \epsilon  \|d_{u }\|
		&> - \|d_{u }\| \notag \\
		&\geq - \prod_{i=\ell+1}^{L} (1+ K_{i-1} )   \|d_{u_{\ell}} \|, \label{eq:2.27}
	\end{align}
	where the second inequality uses \eqref{eq:4-2new} and $ \epsilon \in [0,1) $, 
	the last inequality comes from \eqref{eq:d-bound}. 
	The strict inequalities \eqref{eq:2.26}-\eqref{eq:2.27} lead to a contradiction, which implies that $ u_{\ell}=\psi_{\ell-1} ( \theta,u_{1}, \dots,u_{\ell-1} ) $. It yields the result due to the arbitrariness of $\ell$. 
   
\end{proof}
 
Lemma \ref{lem:eD1-F0} shows that larger values of $ (\beta_{1}, \beta_{2}) $ can ensure $ \mathcal{D}_{1}^{\epsilon} \cap lev_{\leq \Theta(z^{0})} \Theta \subseteq \mathcal{F}_{0} $, which plays an important role in the simplification of  $ \Theta^{(2)}(z;d) $ along certain directions as follows.

\begin{lemma}\label{lem:eD1-sd2}
    Set $ \{ \beta_{\ell} >0, \ell \in [L] \} $ satisfying \eqref{eq:br-threshold-3}. Then for any $ \epsilon \in [0,1) $, if $z$ satisfies \eqref{eq:con}, 
	we have
    \begin{itemize}
        \item[(a)] $ \Theta^{\prime}(z;d) > -  \epsilon \|d\| $ for all 
			$
				d \notin \mathcal{T}_{\mathcal{F}_{0}}(z)
        		:= \{ d \in \R^{\bar{N}} \mid  d_{u_{\ell}} = \psi_{\ell-1}^{\prime} ( \theta,  \mathbf{u}_{\ell-1}; d_{\theta}, \\ d_{\mathbf{u}_{\ell-1}} ),\, \ell\in[L]  \}  
			$, and
        \item[(b)] for any $ d $ with $ \|d\|=1 $ and $ \Theta^{\prime}(z;d) = -  \epsilon   $, $ \Theta^{(2)}(z;d) $ can be simplified as in \eqref{eq:sim_T'2}.
    \end{itemize} 
\end{lemma}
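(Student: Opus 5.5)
By Lemma \ref{lem:eD1-F0}, any $z$ satisfying \eqref{eq:con} lies in $\mathcal{F}_{0}$. So every residual $u_{\ell}-\psi_{\ell-1}(\theta,\mathbf{u}_{\ell-1})$ vanishes, $I_{\pm}^{\ell}(z)=\emptyset$, and $I_{0}^{\ell}(z)=[N_{\ell}]$ for all $\ell$. By \cite[Proposition 3.4]{JWC2025}, the directional derivative then takes the form
\begin{align*}
\Theta^{\prime}(z;d)=F^{\prime}(z;d)+\sum_{\ell\in[L]}\beta_{\ell}\|e_{\ell}(d)\|_{1},\qquad e_{\ell}(d):=d_{u_{\ell}}-\psi_{\ell-1}^{\prime}(\theta,\mathbf{u}_{\ell-1};d_{\theta},d_{\mathbf{u}_{\ell-1}}).
\end{align*}
By definition, $d\in\mathcal{T}_{\mathcal{F}_{0}}(z)$ exactly when $e_{\ell}(d)=\mathbf{0}$ for all $\ell$.

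For (a), I take $d\notin\mathcal{T}_{\mathcal{F}_{0}}(z)$ and project it onto $\mathcal{T}_{\mathcal{F}_{0}}(z)$ layer by layer. Set $\bar d_{\theta}:=d_{\theta}$ and recursively $\bar d_{u_{\ell}}:=\psi_{\ell-1}^{\prime}(\theta,\mathbf{u}_{\ell-1};d_{\theta},\bar d_{\mathbf{u}_{\ell-1}})$. Then $\bar d\in\mathcal{T}_{\mathcal{F}_{0}}(z)$, and because $\bar d_{\theta}=d_{\theta}$, the Lipschitz bounds \eqref{eq:Lip} apply.

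An induction exactly like the one giving \eqref{eq:d-bound} yields $\|d_{u_{\ell}}-\bar d_{u_{\ell}}\|\le\|e_{\ell}(d)\|+K_{\ell-1}\|d_{\mathbf{u}_{\ell-1}}-\bar d_{\mathbf{u}_{\ell-1}}\|$. Consequently
\begin{align*}
\|d_{u}-\bar d_{u}\|\le\sum_{\ell\in[L]}\prod_{j=\ell+1}^{L}(1+K_{j-1})\|e_{\ell}(d)\|.
\end{align*}
Since $F^{\prime}(z;\cdot)=g^{\prime}(u;\cdot)+2\lambda\theta^{\top}(\cdot)$, the same $\theta$-part cancels, and we get
\begin{align*}
\Theta^{\prime}(z;d)-\Theta^{\prime}(z;\bar d)\ge\sum_{\ell}\Bigl(\beta_{\ell}-K_{g}\prod_{j=\ell+1}^{L}(1+K_{j-1})\Bigr)\|e_{\ell}(d)\|_{1}>\sum_{\ell}\prod_{j=\ell+1}^{L}(1+K_{j-1})\|e_{\ell}(d)\|\ge\|d-\bar d\|.
\end{align*}
The strict inequality uses \eqref{eq:br-threshold-3} and the fact that some $e_{\ell}(d)\neq\mathbf{0}$.

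Applying \eqref{eq:con} to $\bar d$ gives $\Theta^{\prime}(z;\bar d)\ge-\epsilon\|\bar d\|$. Therefore
\begin{align*}
\Theta^{\prime}(z;d)>-\epsilon\|\bar d\|+\|d-\bar d\|\ge-\epsilon\|d\|+(1-\epsilon)\|d-\bar d\|\ge-\epsilon\|d\|,
\end{align*}
using $\epsilon<1$ and the triangle inequality. If $\bar d=\mathbf{0}$, the same chain works directly. This proves (a). The main thing to verify is that the $\ell_1$-to-$\ell_2$ comparison and the product constants line up with the $+1$ margin in \eqref{eq:br-threshold-3}.

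For (b), take $\|d\|=1$ with $\Theta^{\prime}(z;d)=-\epsilon$. Part (a) forces $d\in\mathcal{T}_{\mathcal{F}_{0}}(z)$, so $e_{\ell}(d)=\mathbf{0}$ for every $\ell$. I then invoke the second-order formula of \cite[Proposition 3.5]{JWC2025} at $z\in\mathcal{F}_{0}$. Along such $d$, each $\ell_1$ term contributes to second order like $\beta_{\ell}\|d^{(2)}\text{-part}\|_{1}$ minimized over second-order corrections, exactly as in Remark 3.12 of \cite{JWC2025}.

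The main obstacle is that Remark 3.12 was derived under $\Theta^{\prime}(z;d)=0$ at a d-stationary point. I need to check that its argument only uses $z\in\mathcal{F}_{0}$, $d\in\mathcal{T}_{\mathcal{F}_{0}}(z)$, and the threshold \eqref{eq:br-threshold-2}, which is implied by \eqref{eq:br-threshold-3}. If so, the same computation gives \eqref{eq:sim_T'2} verbatim.
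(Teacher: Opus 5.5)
Your part (a) is correct and follows the paper's argument: the same layer-by-layer $\bar d$ with $\bar d_\theta=d_\theta$, the same induction giving $\sum_\ell\|d_{u_\ell}-\bar d_{u_\ell}\|\le\sum_\ell\prod_{j=\ell+1}^{L}(1+K_{j-1})\|e_\ell(d)\|$, and the same use of the $+1$ margin in \eqref{eq:br-threshold-3}. Your closing step uses $\|\bar d\|\le\|d\|+\|d-\bar d\|$ and $(1-\epsilon)\|d-\bar d\|\ge 0$. This is a tidier version of the paper's bound on $\|\bar d\|$, which instead folds $\epsilon$ into the coefficient $K_g+\epsilon$.

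Part (b), however, is left unproved. You end on a conditional (\emph{if so}), and the phrase ``minimized over second-order corrections'' misdescribes the object. By Definition \ref{def:dd}, $\Theta^{(2)}(z;d)$ is the straight-line second-order directional derivative; there is no inner minimization, and nothing of that kind enters \eqref{eq:sim_T'2}.

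The missing step is a direct evaluation that needs neither Remark 3.12 nor any threshold. Proposition 3.5 of \cite{JWC2025} gives, for every $d$, $\Theta^{(2)}(z;d)=F^{(2)}(z;d)+\sum_\ell\beta_\ell\bigl(-\sum_{i\in I_+^\ell(z)\cup I_{0,+}^\ell(z;d)}[\psi_{\ell-1}^{(2)}]_i+\sum_{i\in I_-^\ell(z)\cup I_{0,-}^\ell(z;d)}[\psi_{\ell-1}^{(2)}]_i+\sum_{i\in I_{0,0}^\ell(z;d)}|[\psi_{\ell-1}^{(2)}]_i|\bigr)$. Here the arguments $(\theta,\mathbf{u}_{\ell-1};d_\theta,d_{\mathbf{u}_{\ell-1}})$ are suppressed, and $I_{0,\pm}^\ell(z;d)$ split $I_0^\ell(z)$ by the sign of $[e_\ell(d)]_i$.

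Since $z\in\mathcal{F}_0$, $I_\pm^\ell(z)=\emptyset$. Part (a) gives $d\in\mathcal{T}_{\mathcal{F}_0}(z)$, i.e.\ $e_\ell(d)=\mathbf{0}$, so also $I_{0,\pm}^\ell(z;d)=\emptyset$. Hence $I_{0,0}^\ell(z;d)=[N_\ell]$, and the $\ell$th penalty term is exactly $\beta_\ell\|\psi_{\ell-1}^{(2)}(\theta,\mathbf{u}_{\ell-1};d_\theta,d_{\mathbf{u}_{\ell-1}})\|_1$, which is \eqref{eq:sim_T'2}.

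Equivalently, expand $\psi_{\ell-1}$ to second order along $d$. With zero residual and $e_\ell(d)=\mathbf{0}$, the vector $u_\ell+\tau d_{u_\ell}-\psi_{\ell-1}(\theta+\tau d_\theta,\mathbf{u}_{\ell-1}+\tau d_{\mathbf{u}_{\ell-1}})$ equals $-\frac{\tau^2}{2}\psi_{\ell-1}^{(2)}+o(\tau^2)$.

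The thresholds are needed only upstream: to get $z\in\mathcal{F}_0$ via Lemma \ref{lem:eD1-F0}, and $d\in\mathcal{T}_{\mathcal{F}_0}(z)$ via part (a). So the obstacle you flag about Remark 3.12 being stated only for $\Theta^{\prime}(z;d)=0$ never arises.
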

\begin{proof}
    $ (a) $. Firstly, it follows from Lemma \ref{lem:eD1-F0} that $ z \in \mathcal{F}_{0} $. Hence, the tangent cone $ \mathcal{T}_{\mathcal{F}_{0}}(\cdot) $ is well-defined at $ z $ and has the closed-form according to \cite[Theorem 3.6]{JWC2025}. 
	Together with \cite[Proposition 3.4]{JWC2025}, 
     $   z \in \mathcal{F}_{0} $  implies that for all $ d \in \R^{\bar{N}} $,
	\begin{equation}
		\begin{aligned}
			\Theta^{\prime}(z;d)
			= \, & F^{\prime}(z;d) + \sum_{\ell=1}^{L} \beta_{\ell} \| d_{u_{\ell}} - \psi_{\ell-1}^{\prime}( \theta, \mathbf{u}_{\ell-1} ; d_{\theta}, d_{\mathbf{u}_{\ell-1}} ) \|_{1}  .
		\end{aligned}
		\label{eq:D0=D1-1new}
	\end{equation}
    For any $ d \notin \mathcal{T}_{\mathcal{F}_{0} }(z)  $, we can construct a direction $\bar{d} \in \mathcal{T}_{\mathcal{F}_{0} }(z) $ as follows: set $ \bar{d}_{\theta}:= d_{\theta} $ and
    $ \bar{d}_{u_{\ell}}:= \psi_{\ell-1}^{\prime} ( \theta, u_{1},  \dots,  u_{\ell-1} ; \\  \bar{d}_{\theta}, \bar{d}_{u_{1}},  \dots , \bar{d}_{u_{\ell-1}} ) $
    in the order of $ \ell=1,\dots, L $. Then, it follows from \eqref{eq:D0=D1-1new} and $\Theta^{\prime}(z; \tilde{d}) \geq -\epsilon $ for all $ \tilde{d} $ with $ \|\tilde{d}\|=1 $  that
    \begin{align}
         & \Theta^{\prime}(z;d)  \notag \\
        = \, & \Theta^{\prime}(z;d) - \Theta^{\prime}(z;\bar{d}) + \Theta^{\prime}(z;\bar{d}) \notag \\
        \geq \, & \Theta^{\prime}(z;d) - \Theta^{\prime}(z;\bar{d})  - \epsilon \| \bar{d} \|   \notag \\
        = \, & \underbrace{F^{\prime}(z;d) - F^{\prime}(z;\bar{d})
            +  \sum_{\ell=1}^{L} \beta_{\ell} \| d_{u_{\ell}} - \psi_{\ell-1}^{\prime}( \theta, \mathbf{u}_{\ell-1} ; d_{\theta},   d_{\mathbf{u}_{\ell-1}} ) \|_{1}}_{ (I) }
			 - \epsilon \| \bar{d} \|  . \label{eq:D0=D1-3new}
    \end{align}
    Next we show that the right hand side of \eqref{eq:D0=D1-3new} is larger than $ -\epsilon \|d\| $. For $ (I) $, note that
    \begin{align}
        F^{\prime}(z;d) - F^{\prime}(z;\bar{d})
        = g^{\prime}(u;d_{u}) - g^{\prime}(u;\bar{d}_{u})
		  &\geq   -K_{g}  \| d_{u } - \bar{d}_{u } \| \notag \\ 
        & \geq -K_{g} \sum_{\ell=1}^{L} \| d_{u_{\ell}} - \bar{d}_{u_{\ell}} \|,\label{eq:4-4.5new}
    \end{align}
    where the equality follows from $ \bar{d}_{\theta} = d_{\theta} $ and \cite[(3.2)]{JWC2025}, 
    and the inequality uses \eqref{eq:Lip}. We only need to estimate $ \{  \| d_{u_{\ell}} - \bar{d}_{u_{\ell}} \| , \ell \in [L] \} $ by induction in the following.
    For $\ell=1$, it follows from the definition of $ \bar{d}_{u_{1}} $ and $ \bar{d}_{\theta} = d_{\theta} $ that
    \begin{align}\label{eq:4-5new}
        \| d_{u_{1}} - \bar{d}_{u_{1}} \| = \| d_{u_{1}} - \psi_{0}^{\prime} ( \theta;d_{\theta} ) \|.
    \end{align}
    For $ \ell=2,\dots,L $, assume that
    \begin{align}
        \label{eq:4-6new}
         &\|d_{u_{j}}- \bar{d}_{u_{j}} \| \\
        \leq \, & \|d_{u_{j}} -  \psi_{j-1}^{\prime}( \theta, \mathbf{u}_{j-1} ; d_{\theta} ,  d_{ \mathbf{u}_{j-1} } ) \| \notag \\
        & + K_{j-1} \sum_{p=1}^{j-1}  \left( \prod_{i=p+1}^{j-1} ( 1+K_{i-1} )  \right)
         \| d_{u_{p}} - \psi_{p-1}^{\prime} ( \theta, \mathbf{u}_{p-1}; d_{\theta},   d_{\mathbf{u}_{p-1}} ) \| \notag
    \end{align}
    holds for all $ j=1,\dots,\ell-1 $. Then we can deduce that \eqref{eq:4-6new} also holds at $j=\ell$:
    \begin{align*}
        &\|d_{u_{\ell}}- \bar{d}_{u_{\ell}} \| \notag \\
        = \, &  \|d_{u_{\ell}} -  \psi_{\ell-1}^{\prime}( \theta, \mathbf{u}_{\ell-1} ; \bar{d}_{\theta} ,   \bar{d}_{ \mathbf{u}_{\ell-1} } ) \| \notag\\
        \leq \, &  \|d_{u_{\ell}} -  \psi_{\ell-1}^{\prime}( \theta, \mathbf{u}_{\ell-1} ; d_{\theta} ,   d_{ \mathbf{u}_{\ell-1} } ) \|  \notag\\
        &+ K_{\ell-1} (   \| d_{u_{1}} - \bar{d}_{u_{1}} \| + \dots  + \| d_{u_{\ell-1}} - \bar{d}_{u_{\ell-1}} \| )  \\
        \leq \, & \|d_{u_{\ell}} -  \psi_{\ell-1}^{\prime}( \theta, \mathbf{u}_{\ell-1} ; d_{\theta} ,   d_{ \mathbf{u}_{\ell-1} } ) \| \notag \\
        &+ K_{\ell-1}
            \begin{pmatrix}
                \|d_{u_{1}}- \psi_{0}^{\prime}( \theta;d_{\theta} ) \| \cdot \left(  1 + K_{1} + \cdots+ K_{\ell-2} \prod_{i=2}^{\ell-2} ( 1+K_{i-1} ) \right) \\
                                +\dots + \|d_{u_{\ell-1}} -  \psi_{\ell-2}^{\prime}( \theta, \mathbf{u}_{\ell-2} ; d_{\theta} ,  d_{ \mathbf{u}_{\ell-2} } ) \|
            \end{pmatrix}\notag\\
        = \, & \|d_{u_{\ell}} -  \psi_{\ell-1}^{\prime}( \theta, \mathbf{u}_{\ell-1} ; d_{\theta} ,  d_{ \mathbf{u}_{\ell-1} } ) \|  \notag\\
        &+ K_{\ell-1}  
            \begin{pmatrix}
                \|d_{u_{1}}- \psi_{0}^{\prime}( \theta;d_{\theta} ) \| \cdot \prod_{i=2}^{\ell-1} ( 1+K_{i-1} ) +\dots\\
                                 + \|d_{u_{\ell-1}} -  \psi_{\ell-2}^{\prime}( \theta, \mathbf{u}_{\ell-2} ; d_{\theta} ,   d_{ \mathbf{u}_{\ell-2} } ) \|
            \end{pmatrix} , \notag
    \end{align*}
    where the first equality comes from the definition of $ \bar{d}_{u_{\ell}} $, the first inequality uses $ d_{\theta} = \bar{d}_{\theta} $ and \eqref{eq:Lip}, the last inequality follows from \eqref{eq:4-6new} at $ j=1,\dots,\ell-1 $.
    Together with \eqref{eq:4-5new}, it implies that \eqref{eq:4-6new} holds for all $ \ell\in [L] $. 
	Summing up these upper bounds for $ \{ \|d_{u_{\ell}} - \bar{d}_{ u_{\ell} }  \|, \ell \in [L]  \} $, we have 
	\begin{align}
		\sum_{\ell =1}^{L}\|d_{u_{\ell}} - \bar{d}_{ u_{\ell} }  \|
		\leq \, &\|d_{u_{1}}- \psi_{0}^{\prime}( \theta;d_{\theta} ) \| \cdot \left(  1 + K_{1} + \cdots+ K_{L-1 } \prod_{i=2}^{L-1} ( 1+K_{i-1} ) \right)   \notag \\
		& +\dots+ \|d_{u_{L }} -  \psi_{L-1 }^{\prime}( \theta, \mathbf{u}_{L-1} ; d_{\theta} ,  d_{ \mathbf{u}_{L-1} } ) \| \notag \\
		= \, & \sum_{\ell=1}^{L} \left( \prod_{j=\ell+1}^{L} (1+ K_{j-1}) \right) \| d_{u_{\ell }} -  \psi_{\ell-1 }^{\prime}( \theta, \mathbf{u}_{\ell-1} ; d_{\theta} ,  d_{ \mathbf{u}_{\ell-1} } )\| . \label{eq:d-dbar}
	\end{align}
	Plugging \eqref{eq:d-dbar} into \eqref{eq:4-4.5new}, we have
    \begin{align*}
        & F^{\prime}(z;d)  -  F^{\prime}(z;\bar{d})  \\
        \geq \,&  - K_{g} \sum_{\ell=1}^{L} \left( \prod_{j=\ell+1}^{L} (1+ K_{j-1}) \right)
        \| d_{u_{\ell }} -  \psi_{\ell-1 }^{\prime}( \theta, \mathbf{u}_{\ell-1} ; d_{\theta} ,  d_{ \mathbf{u}_{\ell-1} } )\|,
    \end{align*}
	which implies that 
    \begin{align*}
        (I) \geq \sum_{\ell=1}^{L} \left( \beta_{\ell} - K_{g}  \left( \prod_{j=\ell+1}^{L} (1+ K_{j-1}) \right) \right)
        \cdot  \| d_{u_{\ell }} -  \psi_{\ell-1 }^{\prime}( \theta, \mathbf{u}_{\ell-1} ; d_{\theta} ,  d_{ \mathbf{u}_{\ell-1} } )\| .
    \end{align*} 
	On the other hand, it follows from the definition of $ \bar{d} $ and \eqref{eq:d-dbar} that
	\begin{align*}
		\| \bar{d} \|  
		& \leq \| d\| + \| d-\bar{d} \| \\
		& \leq  \| d\| + \sum_{\ell =1}^{L}\|d_{u_{\ell}} - \bar{d}_{ u_{\ell} }  \| \\
		& \leq  \| d\| + \sum_{\ell=1}^{L} \left( \prod_{j=\ell+1}^{L} (1+ K_{j-1}) \right) \| d_{u_{\ell }} -  \psi_{\ell-1 }^{\prime}( \theta, \mathbf{u}_{\ell-1} ; d_{\theta} ,  d_{ \mathbf{u}_{\ell-1} } )\| .
	\end{align*} 
    Together with \eqref{eq:D0=D1-3new}, it implies that
    \begin{align*}
        & \Theta^{\prime}(z;d) \\
        \geq \, &\sum_{\ell=1}^{L} \left( \beta_{\ell} -  (K_{g} +\epsilon )   \left( \prod_{j=\ell+1}^{L} (1+ K_{j-1}) \right) \right)
          \| d_{u_{\ell }} -  \psi_{\ell-1 }^{\prime}( \theta, \mathbf{u}_{\ell-1} ; d_{\theta} ,  d_{ \mathbf{u}_{\ell-1} } )\|    \\ 
          & - \epsilon \|d\|   \\
		> \, &  - \epsilon \|d\|,  
    \end{align*}
    where the last inequality uses \eqref{eq:br-threshold-3}, $ \epsilon \in [0,1) $, $d \notin \mathcal{T}_{\mathcal{F}_{0}}(z) $.  

	(b). It follows from \cite[Proposition 3.5]{JWC2025} 
    that for any $ d \in \R^{\bar{N}} $, 
	\begin{align*}
        \Theta^{(2)} (z;d)  
         =\,& F^{(2)}(z;d) 
            + \sum_{\ell=1}^{L} \beta_{\ell} \left(
             - \sum_{i \in I_{+}^{\ell}(z) \cup I_{0,+}^{\ell}(z;d)  } [  \psi_{\ell-1}^{(2)} ( \theta, \mathbf{u}_{\ell-1} ; d_{\theta},  d_{\mathbf{u}_{\ell-1}} )  ]_{i} \right. \notag \\
             &  + \sum_{i \in I_{-}^{\ell}(z) \cup I_{0,-}^{\ell}(z;d) } [   \psi_{\ell-1}^{(2)} ( \theta, \mathbf{u}_{\ell-1} ; d_{\theta},   d_{\mathbf{u}_{\ell-1}} )  ]_{i} 
			   \notag \\
             &\left. 
				+\sum_{i \in I_{0,0}^{\ell}(z;d)  } \left| [ \psi_{\ell-1}^{(2)} ( \theta, \mathbf{u}_{\ell-1} ; d_{\theta},   d_{\mathbf{u}_{\ell-1}} )  ]_{i} \right|
             \right)  , \notag
     \end{align*}
where for all $ \ell \in [L] $, $ I_{+}^{\ell}(z), I_{-}^{\ell}(z), I_{0}^{\ell}(z) $ are defined in \eqref{eq:I_+-0}, and 
	\begin{align*}
		& I_{0,+}^{\ell}(z;d):= \{ i \in I_{0}^{\ell}(z) \mid [ d_{u_{\ell}} - \psi_{\ell-1}^{\prime} ( \theta, \mathbf{u}_{\ell-1} ; d_{\theta},  d_{\mathbf{u}_{\ell-1}} )  ]_{i} >0 \} , \\
		& I_{0,-}^{\ell}(z;d):= \{ i \in I_{0}^{\ell}(z) \mid [ d_{u_{\ell}} - \psi_{\ell-1}^{\prime} ( \theta, \mathbf{u}_{\ell-1} ; d_{\theta},   d_{\mathbf{u}_{\ell-1}} )  ]_{i} <0 \} ,  \\
    & I_{0,0}^{\ell}(z;d):= I_{0}^{\ell}(z) \backslash \left( I_{0,+}^{\ell}(z;d)  \cup I_{0,-}^{\ell}(z;d) \right) .
	\end{align*}
Based on Lemma \ref{lem:eD1-F0}, we have $ I_{+}^{\ell}(z) = I_{-}^{\ell}(z) = \emptyset $ for all $ \ell \in [L] $. Furthermore, for any $ d $ with $ \|d\|=1 $ and $ \Theta^{\prime}(z;d) = -  \epsilon   $, it follows from the result (a) that $ d \in \mathcal{T}_{\mathcal{F}_{0}}(z)
= \{ d \in \R^{\bar{N}} \mid  d_{u_{\ell}} = \psi_{\ell-1}^{\prime} ( \theta,  \mathbf{u}_{\ell-1}; d_{\theta},   d_{\mathbf{u}_{\ell-1}} ), \ell\in[L]  \} $, which implies that $ I_{0,+}^{\ell}(z;d) = I_{0,-}^{\ell}(z;d) = \emptyset $ for all $ \ell \in [L] $. Thus, $ I_{0,0}^{\ell}(z;d) = [N_{\ell}] $ for all $ \ell \in [L] $, which completes the proof.
   
\end{proof}

Finally, we obtain $ \mathcal{D}_{1}^{\epsilon} \cap lev_{\leq \Theta(z^{0}) } \Theta \subseteq \mathcal{SD}_{1}^{\epsilon} $ for $ \epsilon \in [0,1) $, where $ \mathcal{D}_{1}^{\epsilon} , \mathcal{SD}_{1}^{\epsilon} $ are defined in \eqref{def:eD_eSD}, and $ z^{0} $ is defined in \eqref{def:z0}.
\begin{theorem}
	\label{thm:eD-eSD}
	Set $ \{ \beta_{\ell} >0, \ell \in [L] \} $ satisfying \eqref{eq:br-threshold-3} and assume that $ g $ is convex. 
	Then for any $ \epsilon \in [0,1) $, we have $ \mathcal{D}_{1}^{\epsilon} \cap lev_{\leq \Theta(z^{0}) } \Theta \subseteq \mathcal{SD}_{1}^{\epsilon} $.
\end{theorem}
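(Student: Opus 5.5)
The plan is to combine Lemma \ref{lem:eD1-F0} and Lemma \ref{lem:eD1-sd2} with the convexity argument already used in Corollary \ref{cor:D-SD}. Fix $\epsilon\in[0,1)$ and take any $z\in\mathcal{D}_{1}^{\epsilon}\cap lev_{\leq \Theta(z^{0})}\Theta$. By the definition \eqref{def:eD_eSD} of $\mathcal{D}_{1}^{\epsilon}$, this $z$ satisfies \eqref{eq:con}. Since the $\beta_\ell$ satisfy \eqref{eq:br-threshold-3}, both lemmas apply at $z$.

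\emph{Step 1.} By Lemma \ref{lem:eD1-F0}, $z\in\mathcal{F}_0$. It remains to check the second-order condition in $\mathcal{SD}_1^\epsilon$, namely $\Theta^{(2)}(z;d)\ge-\epsilon$ for every $d$ with $\|d\|=1$ and $\Theta'(z;d)=-\epsilon$. Take such a $d$. Lemma \ref{lem:eD1-sd2}(b) gives the simplified form
\begin{align*}
\Theta^{(2)}(z;d)=F^{(2)}(z;d)+\sum_{\ell\in[L]}\beta_\ell\|\psi_{\ell-1}^{(2)}(\theta,\mathbf{u}_{\ell-1};d_\theta,d_{\mathbf{u}_{\ell-1}})\|_1 .
\end{align*}

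\emph{Step 2.} I will show that $F^{(2)}(z;d)\ge0$. Write $F(z)=g(u)+\lambda\|\theta\|^2$. The quadratic term contributes $2\lambda\|d_\theta\|^2\ge0$. For the convex, locally Lipschitz $g$, the second-order directional derivative is nonnegative. Indeed, the difference quotient $(g(u+\tau d_u)-g(u))/\tau$ is nondecreasing in $\tau$ and its limit as $\tau\downarrow0$ is $g'(u;d_u)$. Hence $g(u+\tau d_u)-g(u)-\tau g'(u;d_u)\ge0$ for all $\tau>0$, and the limit defining $g^{(2)}(u;d_u)$, which exists by assumption, is $\ge0$.

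\emph{Step 3.} Since the $\ell_1$ terms are nonnegative, $\Theta^{(2)}(z;d)\ge F^{(2)}(z;d)\ge 0\ge-\epsilon$. Therefore $z\in\mathcal{SD}_1^\epsilon$.

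The substantive obstacle is Step 1, reducing $\Theta^{(2)}$ to the form \eqref{eq:sim_T'2} along directions with $\Theta'(z;d)=-\epsilon$. This rests on $z\in\mathcal{F}_0$ and on those directions lying in $\mathcal{T}_{\mathcal{F}_0}(z)$. Both facts are supplied by the preceding lemmas, using the strict inequality in part (a), so the remaining work is routine.
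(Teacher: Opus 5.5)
Your proof is correct and follows the paper's argument. You invoke Lemma \ref{lem:eD1-sd2}(b) to reduce $\Theta^{(2)}(z;d)$ to the form \eqref{eq:sim_T'2} along unit directions with $\Theta'(z;d)=-\epsilon$, drop the nonnegative $\ell_1$ terms, and use convexity of $g$ to get $F^{(2)}(z;d)\ge 0\ge-\epsilon$. Your monotone difference-quotient argument for $g^{(2)}(u;d_u)\ge 0$, together with the $2\lambda\|d_\theta\|^2$ term, just spells out what the paper attributes to convexity; the separate appeal to Lemma \ref{lem:eD1-F0} is harmless but redundant, since Lemma \ref{lem:eD1-sd2} already relies on it.
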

\begin{proof}
	For any $ z \in \mathcal{D}_{1}^{\epsilon} \cap lev_{\leq \Theta(z^{0}) } \Theta $,  it follows from Lemma \ref{lem:eD1-sd2} that for any $ d $ with $ \|d\|=1 $ and $ \Theta^{\prime}(z;d) = -  \epsilon   $, 
	\begin{align*}
		\Theta^{(2)}(z;d) \geq  F^{(2)}(z;d)  
		\geq 0
		\geq -\epsilon,
	\end{align*}
	where the second inequality holds due to the convexity of $ g $ and $ F(z)= g(u) + \lambda \|\theta \|^{2} $ for all $ z = ( \theta^{\top}, u^{\top} )^{\top} $. 
   
\end{proof} 

Theorem \ref{thm:eD-eSD} demonstrates that the inequalities \eqref{eq:br-threshold-3} and the convexity of $ g $ allow $ y \in \mathcal{SD}_{1}^{\epsilon} $ to be verified simply by checking $ \Theta(y) \leq \Theta(z^{0}) $ and $ y \in \mathcal{D}_{1}^{\epsilon} $.
It enables us to obtain an $ \epsilon $-approximate second-order d-stationary point via the first-order algorithm presented in subsequent sections.

\section{SRPA}
\label{sec:r-SRPA}  

To find an $ \epsilon $-approximate d-stationary point of \eqref{l1pen_r} with complexity bounds, we propose the algorithm SRPA in Algorithm \ref{alg2}. 

In $k$th iteration, a piecewise affine approximation of $ \Theta $ is constructed by substituting the smooth functions $ F  $ and $c$ with their linear expansions at $ z^{k} $, respectively. By incorporating a quadratic regularization term, we obtain a function $ \Phi_{k}(s) \approx \Theta(z^{k} +s ) $ for sufficiently small $ s $. 
Then, $ \Phi_{k} $ is minimized within a neighborhood of $ \mathbf{0} $ to generate a trial step.  
If $ \Phi_{k} $ has the same value at the trial step and $ \mathbf{0}  $, SRPA terminates. 
Otherwise, the algorithm checks whether the value of $ \Theta $ decreases sufficiently and updates the iteration point and the regularization parameter correspondingly. 

The selection of the neighborhood for each subproblem is critical for computational efficiency.
Inspired by the idea of active-set in \cite{PRA2017B-sta}, we select the neighborhood $ \mathcal{F}_{k}^{\delta} := \mathcal{F}^{\delta}(z^{k}) $ for a given $ \delta>0 $ as follows.
For any $ z \in \R^{\bar{N}} $, denote
\begin{equation}
	\label{def:AdFd}
	\begin{aligned}
		  \mathcal{A}^{\delta}(z) := \{ i \in [I] \mid \mathbf{d}(z,P_{i}) \leq \delta \}, \,
		  \mathcal{F}^{\delta}(z) := 
		\{ \bar{z}-z \mid \bar{z} \in \cup_{ i \in \mathcal{A}^{\delta}(z) } P_{i} \}, 
	\end{aligned}
\end{equation}
where the polyhedra $ P_{i}, \, i \in [I] $ are defined in the discussion after Definition \ref{def:PA}.
It can be observed that for any $ z $, $ \mathcal{A}^{\delta}(z) $ and $ \mathcal{F}^{\delta}(z) $ contain
\begin{equation}
	\label{def:AF}
	\begin{aligned}
		 \mathcal{A}(z) := \{ i \in [I] \mid   z \in P_{i}   \}, \,
		 \mathcal{F} (z) := 
		\{ \bar{z}-z \mid \bar{z} \in \cup_{ i \in \mathcal{A}(z) } P_{i} \} ,
	\end{aligned}
\end{equation}
respectively, which implies that $ \mathcal{F}^{\delta}(z) $ constitutes a neighborhood of $ \mathbf{0}  $ as follows.
\begin{lemma}
	\label{lem:intFd}
	For any $ z \in \R^{\bar{N}} $, $ z \in int (\cup_{ i \in \mathcal{A}(z) } P_{i} ) \subseteq int (\cup_{ i \in \mathcal{A}^{\delta}(z) } P_{i} ) $ and 
	$ \mathbf{0} \in int ( \mathcal{F}(z) ) \subseteq int ( \mathcal{F}^{\delta}(z) ) $.
\end{lemma}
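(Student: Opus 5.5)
The claim is that the union of the finitely many closed polyhedra containing $z$ covers a whole ball around $z$; everything else follows from set inclusions and a translation by $-z$.

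\emph{Main step.} Since each $P_i$ is a polyhedron, it is closed. Hence the set
\[
C := \bigcup_{i \in [I]\setminus \mathcal{A}(z)} P_i
\]
is a finite union of closed sets and is therefore closed. By definition of $\mathcal{A}(z)$ in \eqref{def:AF}, $z \notin C$. I would then set $r := \mathbf{d}(z,C) > 0$, with the convention $r = +\infty$ if the index set is empty. Closedness of $C$ is exactly what gives $r>0$.

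\emph{Consequence.} Take any $y$ with $\|y - z\| < r$. Then $y \notin C$. Because $\R^{\bar{N}} = \cup_{i\in[I]} P_i$, the point $y$ must lie in some $P_i$ with $i \in \mathcal{A}(z)$. So the open ball of radius $r$ about $z$ is contained in $\cup_{i \in \mathcal{A}(z)} P_i$, which gives $z \in int(\cup_{i \in \mathcal{A}(z)} P_i)$. This step is the only real content, and it is mild.

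\emph{Inclusions.} If $i \in \mathcal{A}(z)$, then $\mathbf{d}(z,P_i) = 0 \le \delta$, so $\mathcal{A}(z) \subseteq \mathcal{A}^{\delta}(z)$. Hence
\[
\cup_{i \in \mathcal{A}(z)} P_i \subseteq \cup_{i \in \mathcal{A}^{\delta}(z)} P_i .
\]
Interior is monotone under inclusion, so
\[
int(\cup_{i \in \mathcal{A}(z)} P_i) \subseteq int(\cup_{i \in \mathcal{A}^{\delta}(z)} P_i).
\]

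\emph{Translation.} By \eqref{def:AdFd}–\eqref{def:AF}, the sets $\mathcal{F}(z)$ and $\mathcal{F}^{\delta}(z)$ are the translates by $-z$ of these two unions. Translation is a homeomorphism, so it commutes with taking interiors and preserves inclusions. Applying it to the statements above gives
\[
\mathbf{0} \in int(\mathcal{F}(z)) \subseteq int(\mathcal{F}^{\delta}(z)),
\]
which completes the argument.
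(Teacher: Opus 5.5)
Your proof is correct and follows essentially the same route as the paper's. Both arguments rest on the finitely many closed polyhedra not containing $z$ staying a positive distance from $z$, after which $\mathcal{A}(z)\subseteq\mathcal{A}^{\delta}(z)$, monotonicity of interiors, and translation by $-z$ finish the proof. The only difference is packaging: the paper argues by contradiction (a sequence outside $\cup_{i\in\mathcal{A}(z)}P_i$ converging to $z$, a pigeonhole subsequence in one $P_{\bar i}$, then closedness), whereas you argue directly via the closed finite union $C$ and $\mathbf{d}(z,C)>0$.
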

\begin{proof} 
	If $ z \notin int (\cup_{ i \in \mathcal{A}(z) } P_{i} ) $, then there exists a sequence $ \{ z_{t} 
	\in \R^{\bar{N}} \backslash \cup_{ i \in \mathcal{A}(z) } P_{i}   \} $ converging to $ z $.
	Together with $\R^{\bar{N}} = \cup_{i \in [I]} P_{i}$, it implies that $ \{z_{t} \} \subseteq \cup_{ i \in [I] \backslash \mathcal{A}(z) } P_{i} $. 
	Since $ | [I] \backslash \mathcal{A}(z) | $ is finite, there exists an $ \bar{i} \in [I] \backslash \mathcal{A}(z) $ and a subsequence $ \{z_{t_{j}} \} $ such that $ \{z_{t_{j}} \} \subseteq P_{\bar{i}} $. 
	Together with the closedness of $ P_{\bar{i}} $ and $ z_{t_{j}} \rightarrow z $, it implies that $ z \in P_{\bar{i}} $.
	According to \eqref{def:AF}, it indicates that $ \bar{i} \in \mathcal{A}(z) $, which leads to a contradiction. 
	Hence, $ z \in int (\cup_{ i \in \mathcal{A}(z) } P_{i} ) $.
	Together with $ \cup_{ i \in \mathcal{A}(z) } P_{i}  \subseteq  \cup_{ i \in \mathcal{A}^{\delta}(z) } P_{i} $ and definitions of $ \mathcal{F} (z), \mathcal{F}^{\delta}(z) $, we yield the result. 
   
\end{proof} 
By the definition of $ \mathcal{F}_{k}^{\delta} $, finding a solution of $ \min_{s \in \mathcal{F}_{k}^{\delta} } \Phi_{k}(s) $ is equivalent to minimizing $ \Phi_{k} $ on polyhedron 
$ \{z-   z^{k} \mid z\in P_{i} \} $ for all
$ i \in \mathcal{A}^{\delta}(z^{k})  $ and selecting the solution at which $ \Phi_{k} $ has the minimum value. 
Besides, $ \Phi_{k} $ is strongly convex on each polyhedron $ \{z-   z^{k} \mid z\in P_{i} \} $  since $ H $ is affine on each $ P_{i} $ and $ ( \Phi_{k}(\cdot) - H(z^{k} +\cdot ) ) $ is strongly convex. 
Consequently, the subproblem is split as $ | \mathcal{A}^{\delta}(z^{k}) | $ strongly convex optimization problems that can be effectively solved by existing algorithms. 
Thus, a small $ \delta $ can reduce the computational cost for each subproblem. 
However, the subsequent convergence analysis demonstrates that an excessively small $ \delta $ may increase the number of iterations. 
Therefore, $ \delta $ should be set properly to balance the computational cost of each subproblem and the number of iterations.

\begin{algorithm}[htbp] 
  \caption{SRPA for \eqref{l1pen_r}}
  \label{alg2} 
  \begin{algorithmic}[1] 
    \REQUIRE Given $ z^{1} \in \R^{\bar{N}} $, $\rho_{1}>0 $, $ \eta_{1} \in (0,1), \eta_{2} >1 $, $ \delta>0 $. 
	\FOR{$ k=1,\dots $}
		\STATE Identify the $ \delta $-active index set $ \mathcal{A}_{k}^{\delta}:= \mathcal{A}^{\delta}(z^{k}) $ by \eqref{def:AdFd}.
		\STATE For each $ i \in \mathcal{A}_{k}^{\delta} $, calculate
            \begin{align*}
                s^{k,i} = \argmin_{s \in  \{z-   z^{k} \mid z\in P_{i} \}    } \Phi_{k}(s):=\, & F(z^{k}) + \nabla F (z^{k})^{\top} s + H (z^{k} + s ) \\
				          &+ \sum_{j \in \mathcal{J}} | c_{j}( z^{k} )  + \nabla c_{j}( z^{k} ) ^{\top} s  |+ \frac{\rho_{k}}{2} \|s\|^{2} .
            \end{align*} 
		\STATE Set $ s^{k} \in \argmin_{ i \in \mathcal{A}_{k}^{\delta} } \Phi_{k}( s^{k,i} ) $.
		\STATE If $ \Phi_{k}(s^{k})=\Phi_{k}( \mathbf{0} ) $, then terminate and output $ z^{k} $.
		\STATE Update $ z^{k+1} $ and $ \rho_{k+1} $ by
			\begin{align*}
				(z^{k+1} , \rho_{k+1} ):=
        \left\{
				\begin{array}{lll}
					 (z^{k} +s^{k} , \rho_{k} ), & \text{if } \Theta(z^{k}) -  \Theta(z^{k} +s^{k}) \geq \frac{\eta_{1} \rho_{k} }{2 } \|s^{k} \|^{2},    \\
					 (z^{k}  , \eta_{2} \rho_{k} ), & \text{otherwise}.   
				\end{array} 
				\right.
			\end{align*} 
	\ENDFOR 
  \end{algorithmic} 
\end{algorithm}

\begin{remark}
	\label{remark:subp}
	In practice, the process of expressing $ H $ as an affine function on each polyhedron can be simplified. 
	Consider $ H(z):=\| [z]_{1:M}- [[z]_{M+1:2M}]_{+} \|_{1} $ for example, where $ M\in \mathbb{Z}_{++}, z \in \R^{2M} $. 
	For any $ \boldsymbol{\nu}  :=(\nu_{1},\dots, \nu_{M})^{\top} \in \{ -1,1 \}^{M} $, we have 
	\begin{align*}
		H(z)= \bar{H}_{\boldsymbol{\nu}}(z)
		:= \sum_{m=1}^{M} \left| z_{m} - [\nu_{m}]_{+} z_{M+m}  \right|
	\end{align*}
	for all $ z \in \bar{P}_{\boldsymbol{\nu}}:= \{ z \mid \nu_{m} \,  z_{M+m} \geq 0 \text{ for all } m \in [M] \} $, which expresses $ H $ as a convex function on each polyhedron $ \bar{P}_{\boldsymbol{\nu}} $.
	 
	Correspondingly, the process of selecting positive polyhedron can also be simplified. 
	Denote the indicator set at $ k $th iteration as $ \bar{\mathcal{A}}_{k}^{\delta}:=\{ \boldsymbol{\nu} \in \{ -1,1 \}^{M} \mid \mathbf{d}( z^{k}, \bar{P}_{\boldsymbol{\nu}} ) \leq \delta \} $. 
	It follows from the definition of $ \bar{P}_{\boldsymbol{\nu}} $ that for any $ \boldsymbol{\nu} \in \{ -1,1 \}^{M} $, 
	\begin{align*}
		&\mathbf{d}( z^{k}, \bar{P}_{\boldsymbol{\nu}} ) \\
		=\, &  \min_{ \bar{z} \in \bar{P}_{\boldsymbol{\nu}} } \left(\sum_{i=1}^{2M} (z_{i}^{k} - \bar{z}_{i}  )^{2}\right)^{\frac{1}{2}} \\
		=\, & \min \left\{ \left(\sum_{m=1}^{M} (z_{M+m}^{k} - \bar{z}_{M+m}  )^{2}\right)^{\frac{1}{2}} \mid  \nu_{m} \,  \bar{z}_{M+m} \geq 0 \text{ for all } m \in [M] \right\} \\
		=\, & \left( \sum_{m=1}^{M} \min \{ (z_{M+m}^{k} - \bar{z}_{M+m}  )^{2} \mid \nu_{m} \,  \bar{z}_{M+m} \geq 0 \}  \right)^{\frac{1}{2}}.
	\end{align*}
	Therefore, if $ | z_{M+m}^{k} | >\delta $, then for any $ \bar{P}_{\bar{\boldsymbol{\nu}}} $ corresponding to $ \bar{\boldsymbol{\nu}} $ with $ \bar{\nu}_{M+m}= - sign( z_{M+m}^{k} ) $, we have $ \mathbf{d}( z, \bar{P}_{\bar{\boldsymbol{\nu}}} ) > \delta $, which implies that $ \bar{\boldsymbol{\nu}} \notin \bar{\mathcal{A}}_{k}^{\delta} $. 
	Thus, 
	\begin{align*}
		\bar{\mathcal{A}}_{k}^{\delta}
		\subseteq
		\left\{ \boldsymbol{\nu} \in \R^{M} \Bigg|  
			\nu_{m} = \left\{
			\begin{array}{ll}
				1, & z_{M+m}^{k} >\delta, \\
				-1, & z_{M+m}^{k} < -\delta, \\
				\pm 1 , &  | z_{M+m}^{k} | \leq \delta, 
			\end{array}
			\,\right.
			\forall m \in [M]
		\right\}
		=: \hat{\mathcal{A}}_{k}^{\delta}.
	\end{align*}
	Through more refined calculations, one can further identify $ \bar{\mathcal{A}}_{k}^{\delta} $ within $ \hat{\mathcal{A}}_{k}^{\delta} $. In fact, the subsequent proof demonstrates that convergence still holds when $ \bar{\mathcal{A}}_{k}^{\delta} $ is replaced by its superset in each iteration. Therefore, in $k$th iteration, we can directly consider the subproblem
	$
		\min_{ s \in \{ z-z^{k} \mid z \in \bar{P}_{\boldsymbol{\nu}} \} } \Phi_{k}(s)
	$
	for all $ \boldsymbol{\nu} \in \hat{\mathcal{A}}_{k}^{\delta} $. 
	It is equivalent to solving the strongly convex optimization problem
	\begin{align} 
		\label{eq:rem_sub}
		\min_{s  }\, \Phi_{k}(s), ~ \text{s.t. } \nu_{m}  \,  [z^{k} + s ]_{M+m} \geq 0, \, m \in [M],
	\end{align}
	for all $ \boldsymbol{\nu} \in \R^{M}  $ with $ \nu_{m} = \left\{
			\begin{array}{ll}
				1, & z_{M+m}^{k} >\delta, \\
				-1, & z_{M+m}^{k} < -\delta, \\
				1 \text{ or } -1 , &  | z_{M+m}^{k} | \leq \delta, 
			\end{array}
			\,\right.$ for all $m \in [M] $, and finding the one solution at which $ \Phi_{k} $ has the minimum value. 
	Furthermore, \eqref{eq:rem_sub} corresponding to different $ \boldsymbol{\nu} $ can be solved in parallel to save time. 
\end{remark}

\subsection{Global convergence}
\label{subsec:1-d.1-r}
Next we show the convergence of the sequence generated by Algorithm \ref{alg2}.
Firstly, it can be observed that if Algorithm \ref{alg2} terminates at $k$th iteration, $ z^{k} $ is a d-stationary point of \eqref{l1pen_r}. 
\begin{lemma}
	\label{lem:2.1-r}
	If Algorithm \ref{alg2} terminates at $ k $th iteration, then $ \Theta^{\prime} (z^{k} ;d )\geq 0 $ for all $ d $.
\end{lemma}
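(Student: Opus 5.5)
We argue by contradiction, exploiting that $\mathbf{0}$ is a minimizer of $\Phi_k$ over $\mathcal{F}_k^{\delta}$ when the algorithm stops. Termination means $\Phi_k(s^k)=\Phi_k(\mathbf{0})$. Since $\mathbf{0}\in\{z-z^k\mid z\in P_i\}$ for every $i\in\mathcal{A}(z^k)\subseteq\mathcal{A}_k^{\delta}$, and each $s^{k,i}$ is the minimizer on its piece, we get
\[
\Phi_k(\mathbf{0})=\Phi_k(s^k)\le \Phi_k(s)\qquad\text{for all } s\in\mathcal{F}_k^{\delta}.
\]
By Lemma \ref{lem:intFd}, $\mathbf{0}\in int(\mathcal{F}_k^{\delta})$, so $\mathbf{0}$ is a local minimizer of $\Phi_k$ on $\R^{\bar{N}}$. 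It follows that $\Phi_k^{\prime}(\mathbf{0};d)\ge 0$ for all $d$.

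Next we compute $\Phi_k^{\prime}(\mathbf{0};d)$ and compare it with $\Theta^{\prime}(z^k;d)$. The quadratic term contributes $\rho_k\langle \mathbf{0},d\rangle=0$. The linear term contributes $\nabla F(z^k)^{\top}d=F^{\prime}(z^k;d)$. The term $H(z^k+\cdot)$ contributes $H^{\prime}(z^k;d)$ exactly. For each $j$, the function $s\mapsto|c_j(z^k)+\nabla c_j(z^k)^{\top}s|$ has directional derivative at $\mathbf{0}$ equal to $\mathrm{sign}(c_j(z^k))\nabla c_j(z^k)^{\top}d$ if $c_j(z^k)\neq0$, and $|\nabla c_j(z^k)^{\top}d|$ if $c_j(z^k)=0$. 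By the chain rule for $|\cdot|$ composed with the smooth $c_j$, this equals $f_j^{\prime}(z^k;d)$. Summing gives
\[
\Theta^{\prime}(z^k;d)=\Phi_k^{\prime}(\mathbf{0};d)\ge 0\qquad\text{for all } d.
\]
All these directional derivatives exist because $H$ is piecewise affine, hence directionally differentiable, and $F$ and $c$ are smooth.

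The main point needing care is the first step: a minimizer of $\Phi_k$ over a union of polyhedra is a local minimizer in $\R^{\bar{N}}$. This relies on Lemma \ref{lem:intFd} and on the fact that $\mathbf{0}$ is feasible for every subproblem indexed by $\mathcal{A}(z^k)$. The matching of directional derivatives in the second step is routine.
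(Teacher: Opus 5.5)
Your proposal is correct and follows essentially the same route as the paper. Both arguments deduce from termination that $\mathbf{0}$ minimizes $\Phi_k$ over $\mathcal{F}_k^{\delta}$, and both invoke Lemma \ref{lem:intFd} ($\mathbf{0}$ is interior, so the tangent cone is all of $\R^{\bar{N}}$) to get $\Phi_k^{\prime}(\mathbf{0};d)\ge 0$ for every $d$. Both then identify $\Phi_k^{\prime}(\mathbf{0};d)$ with $\Theta^{\prime}(z^k;d)$ term by term. Your opening phrase ``we argue by contradiction'' is vestigial, since the argument you give is direct.
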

\begin{proof}
	It follows from $ \Phi_{k}(s^{k})=\Phi_{k}( \mathbf{0} ) $, $ s^{k} \in \argmin_{  s \in \mathcal{F}_{k}^{\delta}  } \Phi_{k}(s) $ and $ \mathbf{0} \in \mathcal{F}_{k}^{\delta} $ from Lemma \ref{lem:intFd} that 
    \begin{align*}
        \mathbf{0} \in \argmin_{  s \in \mathcal{F}_{k}^{\delta}  } \Phi_{k}(s) .
    \end{align*}
    Hence, it follows from the optimality condition and \cite[Proposition 4.1.2]{CP-book} that for all $ d \in   \mathcal{T}_{\mathcal{F}_{k}^{\delta}} (\mathbf{0})  $,
	\begin{align*}
		0 \leq \Phi_{k}^{\prime} ( \mathbf{0} ; d ) 
		=  \nabla F (z^{k})^{\top} d + H^{\prime} (z^{k} ;d ) + \sum_{j \in \mathcal{J}} f_{j}^{\prime} (z^{k} ;d ) +0
		= \Theta^{\prime} (z^{k} ;d ),
	\end{align*}
	which yields the result by $ \mathcal{T}_{\mathcal{F}_{k}^{\delta}} (\mathbf{0}) = \R^{\bar{N}} $ from Lemma \ref{lem:intFd}.
   
\end{proof}

Next, we will focus on the convergence of Algorithm \ref{alg2} in the case where it does not terminate in a finite number of steps. 
It follows from the continuity of $  \nabla F   $ and the level-boundedness of $ \Theta $ \cite[proof of Lemma 2.2]{JWC2025} that, there exists $ K_{F}  >0 $ such that 
	\begin{align}
		\label{eq:KF-r}
		 \| \nabla F(z) \| \leq K_{F}  
		\text{ for all } z \in lev_{\leq \Theta(z^{1}) } \Theta.
	\end{align}   
	Since $ \nabla F$ and $ \nabla c$ are locally Lipschitz continuous on $ \R^{ \bar{N} }  $, there exist $  \bar{K}_{F}, \bar{K}_{c}>0 $ such that 
	\begin{align}
		\label{eq:Kbar-r}
		\|\nabla F(z) - \nabla F( y )  \| \leq \bar{K}_{F} \|z-y\|, \,
		\|\nabla c(z) - \nabla c( y )  \| \leq \bar{K}_{c} \|z-y\|   
	\end{align}
	for all $ z, y  \in lev_{\leq \Theta(z^{1}) } \Theta + \mathbb{B}(\mathbf{0} ; 2 \rho_{1}^{-1} K_{F} + \sqrt{  2 \rho_{1}^{-1} \Theta(z^{1}) } ) $, where $ \sqrt{  2 \rho_{1}^{-1} \Theta(z^{1}) }   $ is well-defined since $ \rho_{1}>0,\Theta(z^{1}) \geq 0 $. 
	The following lemma shows that \eqref{eq:KF-r} and \eqref{eq:Kbar-r} hold for some sequences generated by Algorithm \ref{alg2}.
\begin{lemma}
	\label{lem:2.2-r}
	For $ \{ z^{k} \} $ and $ \{ s^{k} \} $ generated by Algorithm \ref{alg2}, we have  
	\begin{align*} 
		& \{ z^{k} \}  \subseteq lev_{\leq \Theta(z^{1}) } \Theta, \text{ and }\\
		& \{ z^{k} + \tau s^{k} \mid \tau \in [0,1], k\geq 1 \} \subseteq lev_{\leq \Theta(z^{1}) } \Theta + \mathbb{B}(\mathbf{0} ; 2 \rho_{1}^{-1} K_{F} + \sqrt{  2 \rho_{1}^{-1} \Theta(z^{1}) } ) .
	\end{align*}  
\end{lemma}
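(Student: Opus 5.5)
The first inclusion follows by induction on $k$ from the acceptance rule in Algorithm \ref{alg2}. The second inclusion follows by bounding $\|s^{k}\|$ uniformly, comparing $\Phi_{k}(s^{k})$ with $\Phi_{k}(\mathbf{0})$.

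\emph{First inclusion.} Suppose $z^{k}\in lev_{\leq \Theta(z^{1})}\Theta$. If the step is rejected, then $z^{k+1}=z^{k}$. If it is accepted, then
\[
\Theta(z^{k+1})\leq \Theta(z^{k})-\tfrac{\eta_{1}\rho_{k}}{2}\|s^{k}\|^{2}\leq \Theta(z^{k}).
\]
In both cases $\Theta(z^{k+1})\leq\Theta(z^{k})\leq\Theta(z^{1})$. The same two cases show $\rho_{k}\geq\rho_{1}$ for all $k$, since $\rho$ is either kept or multiplied by $\eta_{2}>1$.

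\emph{Bound on $\|s^{k}\|$.} Since $\mathbf{0}\in\mathcal{F}_{k}^{\delta}$ by Lemma \ref{lem:intFd} and $s^{k}$ minimizes $\Phi_{k}$ over $\mathcal{F}_{k}^{\delta}$, we have $\Phi_{k}(s^{k})\leq \Phi_{k}(\mathbf{0})=\Theta(z^{k})\leq\Theta(z^{1})$. Recall $H\geq 0$ and each term $|c_{j}(z^{k})+\nabla c_{j}(z^{k})^{\top}s|\geq 0$. Dropping these terms gives
\[
F(z^{k})+\nabla F(z^{k})^{\top}s^{k}+\tfrac{\rho_{k}}{2}\|s^{k}\|^{2}\leq \Theta(z^{1}).
\]
Now use $F(z^{k})\geq 0$ (because $g\geq0$ and $\lambda\|\theta\|^{2}\geq0$), the bound $\|\nabla F(z^{k})\|\leq K_{F}$ from \eqref{eq:KF-r} (valid by the first inclusion), and Cauchy--Schwarz. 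This yields
\[
\tfrac{\rho_{k}}{2}\|s^{k}\|^{2}-K_{F}\|s^{k}\|-\Theta(z^{1})\leq 0.
\]
Solving the quadratic inequality in $t=\|s^{k}\|$ gives
\[
\|s^{k}\|\leq \rho_{k}^{-1}\Bigl(K_{F}+\sqrt{K_{F}^{2}+2\rho_{k}\Theta(z^{1})}\Bigr)\leq 2\rho_{k}^{-1}K_{F}+\sqrt{2\rho_{k}^{-1}\Theta(z^{1})},
\]
using $\sqrt{a+b}\leq\sqrt a+\sqrt b$. With $\rho_{k}\geq\rho_{1}$ this becomes $\|s^{k}\|\leq 2\rho_{1}^{-1}K_{F}+\sqrt{2\rho_{1}^{-1}\Theta(z^{1})}$.

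\emph{Second inclusion.} For $\tau\in[0,1]$ we have $\|\tau s^{k}\|\leq\|s^{k}\|$, and $z^{k}$ lies in the level set by the first inclusion. Hence $z^{k}+\tau s^{k}$ lies in the stated enlarged set.

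The only delicate point is the step bounding $\|s^{k}\|$. It requires invoking \eqref{eq:KF-r} at $z^{k}$, which needs the first inclusion, and it uses the nonnegativity of $H$ and $F$ to discard terms. Doing the induction first resolves this ordering.
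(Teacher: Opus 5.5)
Your proposal is correct and follows essentially the same route as the paper: monotonicity of $\Theta(z^{k})$ and $\rho_{k}$ from step 6, then $\Phi_{k}(s^{k})\le\Phi_{k}(\mathbf{0})=\Theta(z^{k})\le\Theta(z^{1})$, dropping the nonnegative terms, and using $\|\nabla F(z^{k})\|\le K_{F}$ and $\rho_{k}\ge\rho_{1}$ to bound $\|s^{k}\|$. The only cosmetic difference is that you apply Cauchy--Schwarz and solve a scalar quadratic inequality, whereas the paper completes the square in vector form and then uses the triangle inequality; both give the same bound $\rho_{k}^{-1}K_{F}+\sqrt{2\rho_{k}^{-1}\Theta(z^{1})+\rho_{k}^{-2}K_{F}^{2}}$.
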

\begin{proof}
	Firstly, it follows from step 6 of Algorithm \ref{alg2} that $ \Theta( z^{k} ) \geq \Theta(z^{k+1}) $ and $ \rho_{k}\leq \rho_{k+1} $ for all $k$. Hence, $ \Theta( z^{k} ) \leq \Theta (z^{1}) $ and $ \rho_{k}\geq \rho_{1} $ for all $k$.
	Together with $ \Phi_{k}(\mathbf{0})= \Theta(z^{k}) $, $ s^{k} \in \argmin_{s \in \mathcal{F}_{k}^{\delta}  } \Phi_{k}(s) $ and $ \mathbf{0} \in \mathcal{F}_{k}^{\delta} $, it implies that for any $k$,
	\begin{align*}
		\Theta(z^{1}) \geq \Phi_{k}(\mathbf{0}) 
		\geq \Phi_{k}(s^{k})
		&\geq \nabla F(z^{k})^{\top} s^{k} + \frac{\rho_{k}}{2} \|s^{k} \|^{2} \\
		&= \frac{\rho_{k}}{2} \|s^{k} + \frac{1}{\rho_{k}} \nabla F(z^{k}) \|^{2} - \frac{1}{2 \rho_{k}} \| \nabla F(z^{k})  \|^{2},
	\end{align*}
	where the last inequality uses the nonnegativity of $F, H $ and $ |\cdot| $. After reorganization, we have
	\begin{equation}
		\label{eq:sk_bound-r}
		\begin{aligned}
			\|s^{k}\|
            & \leq \|s^{k} + \frac{1}{\rho_{k}} \nabla F(z^{k}) \| + \frac{1}{\rho_{k}} \|  - \nabla F(z^{k}) \| \\
			& \leq \sqrt{ \frac{2}{\rho_{k}} \Theta(z^{1}) + \frac{1}{\rho_{k}^{2}} \| \nabla F(z^{k})  \|^{2} } + \frac{1}{\rho_{k}} \|   \nabla F(z^{k}) \| \\
			& \leq \sqrt{ \frac{2}{\rho_{1}} \Theta(z^{1}) + \frac{1}{\rho_{1}^{2}} K_{F}^{2} } + \frac{1}{\rho_{1}} K_{F} \\
			& \leq \sqrt{ 2 \rho_{1}^{-1} \Theta(z^{1})} + 2 \rho_{1}^{-1} K_{F} ,
		\end{aligned}
	\end{equation}
	where the third inequality follows from $ \rho_{k} \geq \rho_{1} , 0 \leq \Theta( z^{k} ) \leq \Theta (z^{1}) $ and \eqref{eq:KF-r}, the last inequality uses $ \sqrt{a+b}  \leq \sqrt{a} +\sqrt{b} $ for all $ a,b \in \R_{+} $. Thus, for any $ \tau \in[0,1] $ and any $k$, we have $ z^{k} \in lev_{\leq \Theta(z^{1})} \Theta $ and $ \| \tau s^{k}\| \leq 2 \rho_{1}^{-1} K_{F} + \sqrt{ 2 \rho_{1}^{-1} \Theta(z^{1})}    $, which completes the proof. 
   
\end{proof}

Lemma \ref{lem:2.2-r} implies that the infinite sequence $ \{ z^{k} \} $ generated by Algorithm \ref{alg2} is bounded. Thus, there is at least one accumulation point.
For any $ k\geq 1 $, denote the sets of successful and unsuccessful iterations as
\begin{align*}
	& S_{k} := \left\{ \hat{k} \in \mathbb{Z}_{++} \mid \hat{k} \leq k, \Theta(z^{\hat{k}}) -  \Theta(z^{\hat{k}} +s^{\hat{k}}) \geq \frac{\eta_{1} \rho_{\hat{k}} }{2 } \|s^{\hat{k}} \|^{2} \right\}, \\ 
	& \bar{S}_{k} := \{ 1, \dots, k \} \backslash S_{k} = \left\{ \hat{k} \in \mathbb{Z}_{++} \mid  \hat{k} \leq k, \Theta(z^{\hat{k}}) -  \Theta(z^{\hat{k}} +s^{\hat{k}}) < \frac{\eta_{1} \rho_{\hat{k}} }{2 } \|s^{\hat{k}} \|^{2} \right\}.
\end{align*}
In the following lemma, we show that $ | \bar{S}_{k} | $ is uniformly bounded, i.e., the number of unsuccessful iterations is finite. 
\begin{lemma}
	\label{lem:2.3-r}
	For any $ k \geq 1 $, $ |  \bar{S}_{k} | \leq    [  \lceil \log_{\eta_{2}}  ( \frac{\bar{K}_{F} + \sqrt{J} \bar{K}_{c} }{\rho_{1} ( 1-\eta_{1}  ) }  )  \rceil  ]_{+} +1 =: \kappa_{1}  $.
\end{lemma}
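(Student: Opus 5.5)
The plan is to show that once $\rho_k$ exceeds the threshold $\bar\rho:=\frac{\bar K_F+\sqrt J\bar K_c}{1-\eta_1}$, every iteration is successful. Since $\rho$ is multiplied by $\eta_2$ only at unsuccessful iterations and is never decreased, this caps how many unsuccessful iterations can occur before the threshold is passed.

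\emph{Step 1: an upper bound on $\Theta(z^k+s^k)$ by the model.} On a generic iteration we have $\Phi_k(\mathbf{0})=\Theta(z^k)$, and $H(z^k+s^k)$ appears exactly in $\Phi_k(s^k)$. The two remaining pieces are handled separately, using the Lipschitz bounds \eqref{eq:Kbar-r}; these are valid along the segment $[z^k,z^k+s^k]$ by Lemma \ref{lem:2.2-r}.
\begin{itemize}
\item[(i)] The descent lemma gives $F(z^k+s^k)\le F(z^k)+\nabla F(z^k)^\top s^k+\frac{\bar K_F}{2}\|s^k\|^2$.
\item[(ii)] The bound $|c_j(z^k+s^k)-c_j(z^k)-\nabla c_j(z^k)^\top s^k|\le \frac{\bar K_{c_j}}{2}\|s^k\|^2$, combined with the reverse triangle inequality for $|\cdot|$, and then summed over $j$ via Cauchy–Schwarz with $\sum_j \bar K_{c_j}\le \sqrt J\,\|(\bar K_{c_j})_j\|\le\sqrt J\bar K_c$, gives an extra $\frac{\sqrt J\bar K_c}{2}\|s^k\|^2$. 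The exact constant in this step depends on how $\|\nabla c(z)-\nabla c(y)\|$ is read: each row is bounded by the matrix norm, so $J\bar K_c$ is the crude bound. I would use the Frobenius/row structure to obtain $\sqrt J$, matching the stated constant.
\end{itemize}
Combining (i) and (ii),
\[
\Theta(z^k+s^k)\le \Phi_k(s^k)-\frac{\rho_k}{2}\|s^k\|^2+\frac{\bar K_F+\sqrt J\bar K_c}{2}\|s^k\|^2 .
\]

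\emph{Step 2: the model decrease.} Because $s^k$ minimizes $\Phi_k$ over $\mathcal F_k^\delta\ni\mathbf{0}$, we have $\Phi_k(s^k)\le\Phi_k(\mathbf{0})=\Theta(z^k)$. Hence
\[
\Theta(z^k)-\Theta(z^k+s^k)\ge \frac{\rho_k-\bar K_F-\sqrt J\bar K_c}{2}\|s^k\|^2 .
\]
If $\rho_k\ge\bar\rho$, then $\rho_k-\bar K_F-\sqrt J\bar K_c\ge\eta_1\rho_k$, and the iteration is successful. (If $s^k=\mathbf{0}$ the algorithm would have terminated, so the acceptance test is not vacuous; in either case the test holds.)

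\emph{Step 3: counting.} Only unsuccessful iterations multiply $\rho$ by $\eta_2$, so after $m$ unsuccessful iterations $\rho=\eta_2^m\rho_1$. Let $m_0=[\lceil\log_{\eta_2}(\bar\rho/\rho_1)\rceil]_+$; after $m_0$ unsuccessful iterations $\rho\ge\bar\rho$. From that point every iteration is successful, so no further unsuccessful iterations occur. This gives $|\bar S_k|\le m_0\le\kappa_1$, with the extra $+1$ in $\kappa_1$ serving as slack (for instance to absorb a non-strict comparison). The only delicate step is the constant in (ii); the rest is routine.
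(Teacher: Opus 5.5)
Your proof has the same architecture as the paper's. You use the one-step estimate $\Theta(z^k)-\Theta(z^k+s^k)\ge\frac{\rho_k-\bar K_F-\sqrt J\bar K_c}{2}\|s^k\|^2$ (the paper's \eqref{eq:new2.3-r}). You use the threshold $\bar\rho=\frac{\bar K_F+\sqrt J\bar K_c}{1-\eta_1}$, beyond which every iteration is successful. You then count how many increases of $\rho$ can precede it. The paper does the count by contradiction: it looks at the two largest unsuccessful indices $k_1<k_2$ and spends the extra factor $\eta_2$ to get $\rho_{k_2}>\bar\rho$ strictly. Your direct count is cleaner, and since the acceptance test is non-strict it actually yields $|\bar S_k|\le\kappa_1-1$.

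The one step that does not work as written is 1(ii). The inequality $\|(\bar K_{c_j})_j\|\le\bar K_c$ does not follow from \eqref{eq:Kbar-r}, under either the paper's operator $\ell_2$-norm on matrices or the Frobenius norm. The reason is that the componentwise Lipschitz constants are suprema that may be attained at different pairs of points. For example, for $c_j(z)=\frac12 z_j^2$, $j\in[J]$, one can take $\bar K_c=1$ while every $\bar K_{c_j}=1$, so $\|(\bar K_{c_j})_j\|=\sqrt J$. Estimating component by component therefore only gives $J\bar K_c$, i.e.\ a larger $\kappa_1$ than the one stated.

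The fix is to bound the vector remainder before splitting it into components. Set $r:=c(z+s)-c(z)-\nabla c(z)^\top s=\int_0^1(\nabla c(z+ts)-\nabla c(z))^\top s\,dt$. Then \eqref{eq:Kbar-r} and Lemma \ref{lem:2.2-r} give $\|r\|\le\frac{\bar K_c}{2}\|s\|^2$, and hence $\sum_{j}\bigl|\,|c_j(z+s)|-|c_j(z)+\nabla c_j(z)^\top s|\,\bigr|\le\|r\|_1\le\sqrt J\,\|r\|$. The $\sqrt J$ is just the $\ell_1$--$\ell_2$ comparison on $\R^J$; no Frobenius or row structure is needed. This is exactly the paper's estimate, and with it your proof is complete.
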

\begin{proof}
	It follows from \eqref{eq:Kbar-r} and Lemma \ref{lem:2.2-r} that  
	\begin{align*}
		& \left| F(z+s) - F(z) - \nabla F(z)^{\top} s  \right| \leq \frac{\bar{K}_{F}}{2} \|s\|^{2}, \\
		& \left| \sum_{j \in \mathcal{J}} (|c_{j}(z+s)| - | c_{j}( z )  + \nabla c_{j}( z  ) ^{\top} s  |) \right| 
			\leq \|  c(z+s) - c(z) - \nabla c(z) ^{\top} s \|_{1} \\
            &~~~~~~~~~~~~~~~~~~~~~~~~~~~~~~~~~~~~~~~~~~~~~~~~~\,
			\leq \frac{\sqrt{J} \bar{K}_{c}}{2} \|s\|^{2},  
	\end{align*}
	for all $ (z,s) = ( z^{k} , s^{k} ), \, k\geq 1  $. Hence, for all $k \geq 1$, 
	\begin{equation}
		\label{eq:new2.3-r}
		\begin{aligned}
			& \Theta(z^{k}) - \Theta(   z^{k}+s^{k}   ) \\
			= \,& \Phi_{k}(\mathbf{0}) - \Phi_{k}( s^{k} ) + \Phi_{k}( s^{k} ) - \Theta(z^{k} + s^{k} ) \\
			> \,&  0 
				+  \left( F(z^{k}) + \nabla F (z^{k})^{\top} s^{k} - F( z^{k} + s^{k} ) \right)   \\
				&+ \sum_{j \in \mathcal{J}} \left( | c_{j}( z^{k} )  + \nabla c_{j}( z^{k} ) ^{\top} s^{k}  | - | c_{j}( z^{k} + s^{k} ) |  \right)+ \frac{\rho_{k}}{2} \|s^{k}\|^{2} 
				 \\
			\geq \,&  \frac{ \rho_{k}-\bar{K}_{F} - \sqrt{J} \bar{K}_{c} }{2} \|s^{k} \|^{2}.
		\end{aligned}
	\end{equation}
	If $ | \bar{S}_{k} | >   [  \lceil \log_{\eta_{2}}  ( \frac{\bar{K}_{F} + \sqrt{J} \bar{K}_{c} }{\rho_{1} ( 1-\eta_{1}  ) }  )  \rceil  ]_{+} +1 $, we have $ | \bar{S}_{k} | \geq  [  \lceil \log_{\eta_{2}}  ( \frac{\bar{K}_{F} + \sqrt{J} \bar{K}_{c} }{\rho_{1} ( 1-\eta_{1}  ) }  )  \rceil  ]_{+} +2   $ since $  [ \lceil \log_{\eta_{2}}  ( \frac{\bar{K}_{F} + \sqrt{J} \bar{K}_{c} }{\rho_{1} ( 1-\eta_{1}  ) }  )  \rceil  ]_{+}   $ and  $ | \bar{S}_{k} | $ are integers, which implies that $ | \bar{S}_{k} | \geq 2 $. Denoting the second largest and the largest elements in $ \bar{S}_{k} $ as $ k_{1} $ and $ k_{2} $ respectively, we have $ 1\leq k_{1} <k_{2} \leq k $ and $ | \bar{S}_{k_{1}} | = | \bar{S}_{k} \backslash \{ k_{2} \} | = | \bar{S}_{k} |-1 \geq  [  \lceil \log_{\eta_{2}}  ( \frac{\bar{K}_{F} + \sqrt{J} \bar{K}_{c} }{\rho_{1} ( 1-\eta_{1}  ) }  )  \rceil  ]_{+} +1 $. Together with step 6 of Algorithm \ref{alg2}, it implies that
	\begin{align*}
		\rho_{k_{2}}= \dots = \rho_{k_{1}+1} 
		&= \rho_{1} \eta_{2}^{ |  \bar{S}_{k_{1}} | } \\
		&\geq \rho_{1} \eta_{2} \frac{\bar{K}_{F} + \sqrt{J} \bar{K}_{c} }{\rho_{1} ( 1-\eta_{1}  ) } 
		= \eta_{2} \frac{\bar{K}_{F} + \sqrt{J} \bar{K}_{c} }{  ( 1-\eta_{1}  ) }
		> \frac{\bar{K}_{F} + \sqrt{J} \bar{K}_{c} }{  ( 1-\eta_{1}  ) }.
	\end{align*}
	Combining it with \eqref{eq:new2.3-r}, we have
	\begin{align*}
		\Theta(z^{k_{2}}) - \Theta( z^{k_{2}} + s^{k_{2}}  )  
		\geq   \frac{ \rho_{k_{2}}-\bar{K}_{F} - \sqrt{J} \bar{K}_{c} }{2} \|s^{k_{2}} \|^{2}
		> \frac{\eta_{1} \rho_{k_{2}} }{ 2 } \|s^{k_{2}} \|^{2}.
	\end{align*}
	According to step 6 of Algorithm \ref{alg2}, it indicates that the $  k_{2} $th iteration is successful, which contradicts $ k_{2} \in \bar{S}_{k} $. 
   
\end{proof}

Actually, it can be observed from \eqref{eq:new2.3-r} that if the values of $ ( \bar{K}_{F} ,\bar{K}_{c} ) $ are available, we can set $ \rho_{1}= \frac{ \bar{K}_{F} + \sqrt{J} \bar{K}_{c} }{ 1-\eta_{1} } $ to simplify Algorithm \ref{alg2} as an algorithm with fixed regularization coefficient since the iteration is always successful. 
In general, the step 6 of Algorithm \ref{alg2}, \eqref{eq:sk_bound-r} and  Lemma \ref{lem:2.3-r} imply the following corollary.
\begin{corollary}
	\label{cor:2.1-r}
	For any $ k\geq 1 $, 
	\begin{align}
		& \rho_{1} \leq  \rho_{k} \leq  \eta_{2} \max\left\{ \rho_{1} , \frac{ \eta_{2} ( \bar{K}_{F} + \sqrt{J} \bar{K}_{c} ) }{ 1 - \eta_{1} }  \right\} =: \rho  , \label{eq:rhok_bound-r} \\
		&   \sum_{   \hat{k} \in  S_{k}   } \|s^{\hat{k}} \|^{2}   \leq \frac{ 2 \left( \Theta(z^{1}) - \Theta^{*} \right) }{ \eta_{1} \rho_{1} }, \label{eq:ssk2_bound-r}
	\end{align}
	where $ \Theta^{*}:=\min_{z} \Theta(z) $. And $ \lim_{k \rightarrow \infty} s^{k}  = \mathbf{0} $.
\end{corollary}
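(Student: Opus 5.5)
The argument splits into three parts: the bounds on $\rho_k$ in \eqref{eq:rhok_bound-r}, the summability bound \eqref{eq:ssk2_bound-r}, and then $s^k\to\mathbf{0}$.

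\emph{Bounds on $\rho_k$.} The lower bound $\rho_k\ge\rho_1$ is immediate, because step 6 of Algorithm \ref{alg2} never decreases $\rho$. For the upper bound, set $\bar\rho:=\frac{\bar K_F+\sqrt J\bar K_c}{1-\eta_1}$. Inequality \eqref{eq:new2.3-r} shows that any iteration with $\rho_k\ge\bar\rho$ is successful, since then $\rho_k-\bar K_F-\sqrt J\bar K_c\ge\eta_1\rho_k$. Hence $\rho$ can be multiplied by $\eta_2$ only at iterations where $\rho_k<\bar\rho$.
\begin{itemize}
\item If $\rho_1\ge\bar\rho$, every iteration is successful and $\rho_k=\rho_1$ for all $k$.
\item Otherwise, the last increase happens from a value below $\bar\rho$, so $\rho_k<\eta_2\bar\rho$ for every $k$.
\end{itemize}
In both cases $\rho_k\le\eta_2\max\{\rho_1,\eta_2\bar\rho\}=\rho$. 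This is an even weaker statement than what the argument gives, so the extra factor $\eta_2$ in $\rho$ causes no difficulty. Alternatively, one can use Lemma \ref{lem:2.3-r} directly: $\rho_k\le\rho_1\eta_2^{\kappa_1}$, and $\eta_2^{\lceil\log_{\eta_2} x\rceil}\le\eta_2 x$, which yields the same bound.

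\emph{Summability bound \eqref{eq:ssk2_bound-r}.} I would telescope over successful iterations. For $\hat k\in S_k$ we have $z^{\hat k+1}=z^{\hat k}+s^{\hat k}$ and
\[
\Theta(z^{\hat k})-\Theta(z^{\hat k+1})\ge \tfrac{\eta_1\rho_{\hat k}}{2}\|s^{\hat k}\|^2\ge\tfrac{\eta_1\rho_1}{2}\|s^{\hat k}\|^2 .
\]
At unsuccessful iterations $z^{\hat k+1}=z^{\hat k}$, so those terms contribute zero decrease. Summing over $\hat k\le k$ therefore gives
\[
\tfrac{\eta_1\rho_1}{2}\sum_{\hat k\in S_k}\|s^{\hat k}\|^2\le\Theta(z^1)-\Theta(z^{k+1})\le\Theta(z^1)-\Theta^*,
\]
where finiteness of $\Theta^*$ was established earlier.

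\emph{Convergence $s^k\to\mathbf{0}$.} By Lemma \ref{lem:2.3-r}, the set of unsuccessful iterations is finite, of size at most $\kappa_1$. So for all sufficiently large $k$, iteration $k$ is successful. The partial sums in \eqref{eq:ssk2_bound-r} are uniformly bounded, so the series $\sum_{k\in S_\infty}\|s^k\|^2$ converges, and its terms tend to zero. Since all large $k$ belong to $S_\infty$, this gives $s^k\to\mathbf{0}$.

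The only step needing any care is the upper bound on $\rho_k$, and that reduces to the observation from \eqref{eq:new2.3-r} that an iteration is always successful once $\rho_k\ge\bar\rho$.
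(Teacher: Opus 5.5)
Your proof is correct. It shares the paper's skeleton (monotonicity of $\rho_k$ from step 6, telescoping over $S_k$ with $\rho_{\hat k}\ge\rho_1$, finiteness of the unsuccessful set), but two steps are argued differently.

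For the upper bound on $\rho_k$, the paper inserts $|\bar S_{k-1}|\le\kappa_1$ from Lemma~\ref{lem:2.3-r} into $\rho_k=\rho_1\eta_2^{|\bar S_{k-1}|}$ and uses $\lceil a\rceil\le a+1$; this is your ``alternative''. Your main argument instead applies the threshold observation from \eqref{eq:new2.3-r} directly: with $\bar\rho:=(\bar K_F+\sqrt J\bar K_c)/(1-\eta_1)$, increases of $\rho$ occur only from values below $\bar\rho$. This avoids the counting lemma altogether. It also gives the sharper bound $\rho_k\le\max\{\rho_1,\eta_2\bar\rho\}$, so the outer factor $\eta_2$ in the definition of $\rho$ is not actually needed.

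For $s^k\to\mathbf{0}$, the paper bounds the unsuccessful contributions by $|\bar S_k|$ times the square of the uniform step bound \eqref{eq:sk_bound-r}. This gives an explicit $k$-independent bound on the full sum $\sum_{\hat k\le k}\|s^{\hat k}\|^2$. Your tail argument (all sufficiently large $k$ are successful, and the series over successful iterations converges) needs neither \eqref{eq:sk_bound-r} nor an explicit constant.

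Both routes are valid. The paper's yields a quantitative bound on the whole series of step lengths, while yours is shorter and more elementary.
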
 
\begin{proof}
    Firstly, $ \rho_{k}\geq \rho_{1} $ for all $ k \geq 1 $ as established in the proof of Lemma \ref{lem:2.2-r}. 
	Since $ \eta_{2}>1 $, \eqref{eq:rhok_bound-r} naturally holds at $ k=1 $. For $ k \geq 2 $, step 6 of Algorithm \ref{alg2} and Lemma \ref{lem:2.3-r} imply that 
	\begin{align*}
		\rho_{k} 
		= \rho_{1} \eta_{2}^{  | \bar{S}_{ k-1 } | }  
		&\leq \rho_{1} \eta_{2}^{  \left[ \left\lceil \log_{\eta_{2}} \left( \frac{\bar{K}_{F} + \sqrt{J} \bar{K}_{c} }{\rho_{1} ( 1-\eta_{1}  ) } \right) \right\rceil \right]_{+} +1  } \\
		& = \rho_{1} \eta_{2} \max \left\{ 1,  \, \eta_{2}^{   \left\lceil \log_{\eta_{2}} \left( \frac{\bar{K}_{F} + \sqrt{J} \bar{K}_{c} }{\rho_{1} ( 1-\eta_{1}  ) } \right) \right\rceil   }\right\}  \\
		& \leq \rho_{1} \eta_{2} \max \left\{ 1,  \,   \frac{ \eta_{2}  ( \bar{K}_{F} + \sqrt{J} \bar{K}_{c}) }{\rho_{1} ( 1-\eta_{1}  ) }    \right\}, 
	\end{align*} 
	where the second inequality follows from $ \lceil a \rceil \leq a+1 $ for all $ a \in \R $. 

	Then, it follows from \eqref{eq:sk_bound-r} and Lemma \ref{lem:2.3-r} that 
	\begin{align}
		& \sum_{   \hat{k} \in \bar{S}_{k}   } \|s^{\hat{k}} \|^{2} \notag \\
		\leq \, & | \bar{S}_{k} | \cdot \left( \sqrt{ 2 \rho_{1}^{-1} \Theta(z^{1})} + 2 \rho_{1}^{-1} K_{F} \right)^{2} \notag \\
		\leq \, & \left(  \left[ \left\lceil \log_{\eta_{2}} \left( \frac{\bar{K}_{F} + \sqrt{J} \bar{K}_{c} }{\rho_{1} ( 1-\eta_{1}  ) } \right) \right\rceil \right]_{+} +1  \right) \left( \sqrt{ 2 \rho_{1}^{-1} \Theta(z^{1})} + 2 \rho_{1}^{-1} K_{F} \right)^{2}. \label{eq:2.7-r}
	\end{align}
	At the same time, $ \rho_{1} \leq \rho_{k} $ and the step 6 of Algorithm \ref{alg2} indicate that 
	\begin{align*}
		\frac{ \eta_{1} \rho_{1} }{ 2 } \sum_{   \hat{k} \in  S_{k}   } \|s^{\hat{k}} \|^{2}
		\leq  \sum_{   \hat{k} \in  S_{k}   } \frac{ \eta_{1} \rho_{\hat{k}} }{ 2 } \|s^{\hat{k}} \|^{2} 
		&\leq \sum_{   \hat{k} \in  S_{k}   } \left( \Theta(z^{\hat{k}}) - \Theta(z^{\hat{k}+1}) \right) \\
		&= \sum_{ \hat{k} \leq k   } \left( \Theta(z^{\hat{k}}) - \Theta(z^{\hat{k}+1}) \right) \\
		&= \Theta(z^{1}) - \Theta(z^{k+1}) \\
		& \leq \Theta(z^{1}) - \Theta^{*}, 
	\end{align*}
	which yields \eqref{eq:ssk2_bound-r}.
	Together with \eqref{eq:2.7-r}, it implies that 
		\begin{align*}
			\sum_{ \hat{k} \leq k } \|s^{\hat{k}} \|^{2} 
			\leq \, & \left(  \left[ \left\lceil \log_{\eta_{2}} \left( \frac{\bar{K}_{F} + \sqrt{J} \bar{K}_{c} }{\rho_{1} ( 1-\eta_{1}  ) } \right) \right\rceil \right]_{+} +1  \right) \left( \sqrt{ 2 \rho_{1}^{-1} \Theta(z^{1})} + 2 \rho_{1}^{-1} K_{F} \right)^{2} \\
                &+ \frac{ 2 (\Theta(z^{1}) - \Theta^{*}) }{\eta_{1} \rho_{1} },
		\end{align*} 
	which further deduces $ \lim_{k \rightarrow \infty} s^{k}  = \mathbf{0} $, since its righthand side is independent of $k$.
   
\end{proof}

Note that $\rho$ defined in \eqref{eq:rhok_bound-r} is independent of $k$, since it is determined by the input parameters $ (\rho_{1},\eta_{1}, \eta_{2}) $ and the parameters $ ( \bar{K}_{F}, J,\bar{K}_{c} ) $ concerning \eqref{l1pen_r}. 
Define 
\begin{align*}
	\Phi(z,s):=  F(z ) + \nabla F (z )^{\top} s + H(z  + s ) + \sum_{j \in \mathcal{J}} | c_{j}( z  )  + \nabla c_{j}( z  ) ^{\top} s  |+ \frac{\rho }{2} \|s\|^{2}.
\end{align*}
Similar to $ \Phi_{k}(s) $, $ \Phi(z,s) $ is proper, continuous and level-bounded in $s$ for any $z$. 
Together with nonemptiness and closedness of $ \mathcal{F}(z) $ defined in \eqref{def:AF}, $ \argmin_{ s \in \mathcal{F}(z) } \Phi(z ,s) $ is nonempty and compact for any $ z $.  
Next we show that $ \mathbf{0} \in \argmin_{s \in \mathcal{F}(z) } \Phi(z,s)  $ and $ \Theta^{\prime}(z;d)\geq 0 $ for all $d$ at any accumulation point.
\begin{theorem}
	\label{th:2.1-r}
	For any accumulation point $ \bar{z} $ of the infinite sequence $ \{ z^{k} \} $ generated by Algorithm \ref{alg2}, it holds that
	\begin{align*}
		\Theta^{\prime}(\bar{z};d)\geq 0, \, \forall d.
	\end{align*}
\end{theorem}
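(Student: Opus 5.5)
Let $\{z^{k}\}_{k\in\mathcal{K}}$ be a subsequence converging to $\bar z$. The plan has two parts. First, show that $\mathbf{0}\in\argmin_{s\in\mathcal{F}(\bar z)}\Phi(\bar z,s)$. Then convert this minimality into d-stationarity exactly as in the proof of Lemma \ref{lem:2.1-r}.

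\textbf{Step 1 (index sets).} Since there are finitely many polyhedra, we may pass to a further subsequence on which $\mathcal{A}_{k}^{\delta}\equiv\mathcal{A}^{*}$ and $\rho_k\equiv\rho^{*}\in[\rho_1,\rho]$. The latter is possible because, by Lemma \ref{lem:2.3-r}, $\rho_k$ is eventually constant. By closedness of the $P_i$, every $i$ with $\bar z\in P_i$ satisfies $\mathbf{d}(z^{k},P_i)\le\|z^k-\bar z\|\le\delta$ for large $k$. Hence $\mathcal{A}(\bar z)\subseteq\mathcal{A}^{*}$. This is the place where the relaxation $\delta>0$ is used.

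\textbf{Step 2 (limiting minimality).} Fix $i\in\mathcal{A}(\bar z)$ and any $\bar s$ with $\bar z+\bar s\in P_i$. Put $s:=\bar z+\bar s-z^{k}$. Then $z^k+s\in P_i$, so $s$ is feasible for the $i$th subproblem at iteration $k$. Therefore $\Phi_k(s^k)\le\Phi_k(\bar z+\bar s-z^k)$.

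We now let $k\to\infty$ in $\mathcal{K}$. By Corollary \ref{cor:2.1-r}, $s^k\to\mathbf{0}$. Moreover $F,\nabla F,c,\nabla c$ and $H$ are continuous. So the left side tends to $\Phi_k(\mathbf{0})$-limit, namely $F(\bar z)+H(\bar z)+\sum_j|c_j(\bar z)|=\Theta(\bar z)$. The right side tends to $\Phi^{*}(\bar z,\bar s)$, where $\Phi^*$ denotes $\Phi$ with $\rho$ replaced by $\rho^*$. This gives
\begin{align*}
\Theta(\bar z)\le F(\bar z)+\nabla F(\bar z)^{\top}\bar s+H(\bar z+\bar s)+\sum_{j\in\mathcal{J}}|c_j(\bar z)+\nabla c_j(\bar z)^{\top}\bar s|+\frac{\rho^{*}}{2}\|\bar s\|^{2}
\end{align*}
for all $\bar s\in\mathcal{F}(\bar z)$. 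Since $\rho^*\le\rho$, the same inequality holds with $\rho$, i.e.\ $\mathbf{0}\in\argmin_{s\in\mathcal{F}(\bar z)}\Phi(\bar z,s)$.

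An alternative to fixing $\rho^*$ is to use $\rho_k\le\rho$ directly, bounding $\frac{\rho_k}{2}\|s\|^2\le\frac{\rho}{2}\|s\|^2$ before passing to the limit. Either route works.

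\textbf{Step 3 (stationarity).} By Lemma \ref{lem:intFd}, $\mathbf{0}\in int(\mathcal{F}(\bar z))$, so $\mathcal{T}_{\mathcal{F}(\bar z)}(\mathbf{0})=\R^{\bar N}$. The optimality condition for a local minimizer of the directionally differentiable function $\Phi(\bar z,\cdot)$ then gives, for all $d$,
\begin{align*}
0\le\Phi(\bar z,\cdot)^{\prime}(\mathbf{0};d)=\nabla F(\bar z)^{\top}d+H^{\prime}(\bar z;d)+\sum_{j\in\mathcal{J}}f_j^{\prime}(\bar z;d)=\Theta^{\prime}(\bar z;d).
\end{align*}
The quadratic term contributes nothing, and the derivative of $|c_j(\bar z)+\nabla c_j(\bar z)^\top s|$ at $s=\mathbf{0}$ coincides with $f_j^{\prime}(\bar z;d)$, as in Lemma \ref{lem:2.1-r}.

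The main obstacle is Step 1–2: making sure the limit of the restricted subproblems still sees all polyhedra active at $\bar z$. This is exactly where a fixed $\delta>0$ is needed, together with the feasibility shift $\bar z+\bar s-z^k$. The remaining limits are routine continuity arguments.
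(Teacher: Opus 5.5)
Your proposal is correct and follows the paper's proof essentially step for step. You show $\mathcal{A}(\bar z)\subseteq\mathcal{A}_{k}^{\delta}$ along the subsequence, compare $s^{k}$ with the feasible shifted point for $P_i$, and pass to the limit using $s^{k}\to\mathbf{0}$ and $\rho_k\le\rho$ to obtain $\mathbf{0}\in\argmin_{s\in\mathcal{F}(\bar z)}\Phi(\bar z,s)$. You then read off $\Theta^{\prime}(\bar z;d)\ge 0$ from $\mathbf{0}\in int(\mathcal{F}(\bar z))$.

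The only differences are harmless. You let $\Phi_{k}(s^{k})$ tend to $\Theta(\bar z)$ directly, whereas the paper routes the left side through $\Theta(z^{k}+s^{k})$ and the Lipschitz estimate from \eqref{eq:new2.3-r}; your route is slightly more direct. The extra subsequence fixing $\mathcal{A}_{k}^{\delta}$ and $\rho_k$ is unnecessary.
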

\begin{proof}
	First we show $ \mathbf{0} \in \argmin_{s \in \mathcal{F}(\bar{z}) } \Phi(\bar{z},s)  $.
		Denote the subsequence converging to $ \bar{z} $ as $ \{ z^{k_{t}}, t \geq 1 \} $. 
		For any $ i \in \mathcal{A}(\bar{z}) = \{ i \in [I] \mid \bar{z} \in P_{i} \} $, we have $ \mathbf{d}(\bar{z} , P_{i} )=0 $. 
		By the continuity of $ \mathbf{d}( \cdot , P_{i} ) $  and $ z^{k_{t}} \rightarrow \bar{z} $, there exists $ \bar{t}_{i} \geq 1 $ such that $ \mathbf{d}( z^{k_{t}} , P_{i} ) \leq \delta $ for all $ t \geq \bar{t}_{i} $. 
		Define $ \bar{t}:= \max_{i \in \mathcal{A}(\bar{z})} \bar{t}_{i} < \infty $. Then we have $ \mathbf{d}( z^{k_{t}} , P_{i} ) \leq \delta $ for any $ t \geq \bar{t}  $ and any $ i \in \mathcal{A}(\bar{z}) $, which implies that $ \mathcal{A}(\bar{z}) \subseteq \mathcal{A}_{k_{t}}^{\delta}= \{ i \in [I] \mid \mathbf{d}( z^{k_{t}} , P_{i} ) \leq \delta  \} $ for any $ t \geq \bar{t}  $. 
		For any $ k \geq 1 $, it is similar to \eqref{eq:new2.3-r} that 
		\begin{align*}
			\Theta ( z^{k} + s^{k} )
			& \leq \Phi_{k}(s^{k}) + \frac{ \bar{K}_{F} + \sqrt{J} \bar{K}_{c} - \rho_{k} }{2} \|s^{k} \|^{2} \\
			& \leq \Phi_{k}(s ) + \frac{ \bar{K}_{F} + \sqrt{J} \bar{K}_{c} - \rho_{k} }{2} \|s^{k} \|^{2} 
		\end{align*}
		for all $ s \in  \{z-   z^{k} \mid z\in P_{i} \}   $ and all $ i \in \mathcal{A}_{k}^{\delta} $,
		where the last inequality follows from steps 2-4 of Algorithm \ref{alg2}.
		It implies that for any $ t \geq \bar{t} $ and any $ i \in \mathcal{A}(\bar{z}) \subseteq \mathcal{A}_{k_{t}}^{\delta} $, 
		\begin{align*}
			&\Theta ( z^{k_{t}} + s^{k_{t}} ) \\
			 \leq \, & \Phi_{k_{t}}(s ) + \frac{ \bar{K}_{F} + \sqrt{J} \bar{K}_{c} - \rho_{k_{t}} }{2} \|s^{k_{t}} \|^{2} \\
			= \, & F(z^{k_{t}}) + \nabla F (z^{k_{t}})^{\top} s + H (z^{k_{t}} + s ) + \sum_{j \in \mathcal{J}} | c_{j}( z^{k_{t}} )  + \nabla c_{j}( z^{k_{t}} ) ^{\top} s  | \\
       & + \frac{\rho_{k_{t}}}{2} \|s\|^{2} 
				+ \frac{ \bar{K}_{F} + \sqrt{J} \bar{K}_{c} - \rho_{k_{t}} }{2} \|s^{k_{t}} \|^{2}
		\end{align*}
		for all $ s \in  \{z-   z^{k_{t}} \mid z\in P_{i} \}  $.
		Hence, for any $ t \geq \bar{t} $ and any $ i \in \mathcal{A}(\bar{z})   $, 
		\begin{align*}
			&\Theta ( z^{k_{t}} + s^{k_{t}} ) \\
			 \leq   \, & F(z^{k_{t}}) + \nabla F (z^{k_{t}})^{\top} ( z- z^{k_{t}} ) + H (z ) + \sum_{j \in \mathcal{J}} | c_{j}( z^{k_{t}} )  + \nabla c_{j}( z^{k_{t}} ) ^{\top} ( z- z^{k_{t}} )  | \\
				& + \frac{\rho_{k_{t}}}{2} \| z- z^{k_{t}} \|^{2}+ \frac{ \bar{K}_{F} + \sqrt{J} \bar{K}_{c} - \rho_{k_{t}} }{2} \|s^{k_{t}} \|^{2} \\
			\leq   \, & F(z^{k_{t}}) + \nabla F (z^{k_{t}})^{\top} ( z- z^{k_{t}} ) + H (z ) + \sum_{j \in \mathcal{J}} | c_{j}( z^{k_{t}} )  + \nabla c_{j}( z^{k_{t}} ) ^{\top} ( z- z^{k_{t}} )  | \\
				& + \frac{\rho }{2} \| z- z^{k_{t}} \|^{2}+ \frac{ \bar{K}_{F} + \sqrt{J} \bar{K}_{c} - \rho_{k_{t}} }{2} \|s^{k_{t}} \|^{2}
		\end{align*}
		for all $ z \in P_{i} $, where the last inequality uses $ \rho_{k} \leq \rho $ from Corollary \ref{cor:2.1-r}.
		Take the limit $ t \rightarrow \infty $, we yield that for any $ i \in \mathcal{A}(\bar{z})   $ and any $ z \in P_{i} $, 
		\begin{align*}
			\Theta ( \bar{z} )  
			 \leq \, &    F(\bar{z}) + \nabla F (\bar{z})^{\top} ( z- \bar{z} ) + H (z ) \\
       & + \sum_{j \in \mathcal{J}} | c_{j}( \bar{z} )  + \nabla c_{j}( \bar{z} ) ^{\top} ( z- \bar{z} )  |+ \frac{\rho }{2} \| z- \bar{z} \|^{2} 
		\end{align*}
		by $ z^{k_{t}} \rightarrow \bar{z} $, $ s^{k_{t}} \rightarrow \mathbf{0} $ from Corollary \ref{cor:2.1-r}, the continuity of $ \Theta,F, \nabla F, c_{j}, \nabla c_{j}, |\cdot|, \|\cdot \|^{2} $, and the boundedness of $ \rho_{k} $ from Corollary \ref{cor:2.1-r}. 
		It implies that for any $ i \in \mathcal{A}(\bar{z})   $ and any $ s \in 
		\{ z -\bar{z} \mid z \in P_{i} \}$, 
		\begin{align*}
			\Theta ( \bar{z} )  
			 \leq \, &    F(\bar{z}) + \nabla F (\bar{z})^{\top} s + H (\bar{z} + s )  + \sum_{j \in \mathcal{J}} | c_{j}( \bar{z} )  + \nabla c_{j}( \bar{z} ) ^{\top} s  |+ \frac{\rho }{2} \| s \|^{2} \\
			 = \, & \Phi(\bar{z},s).
		\end{align*} 
		Together with the arbitrariness of $ i \in \mathcal{A}(\bar{z})   $ and $ s \in 
		\{ z -\bar{z} \mid z \in P_{i} \} $, we have $ \Theta(\bar{z}) 
		\leq \min_{s \in 
		\{ z -\bar{z} \mid z \in \cup_{i \in  \mathcal{A}(\bar{z})}  P_{i} \}
		} \Phi(\bar{z},s) 
		= \min_{s \in \mathcal{F}(\bar{z}) }\Phi(\bar{z},s) $. Combining it with $ \Theta(\bar{z})= \Phi(\bar{z}, \mathbf{0}) $ and $ \mathbf{0} \in \mathcal{F}(\bar{z}) $ from Lemma \ref{lem:intFd}, we yield $ \mathbf{0} \in \argmin_{s \in \mathcal{F}(\bar{z}) } \Phi(\bar{z},s)  $.

	Hence, similar to Lemma \ref{lem:2.1-r}, it follows from the optimality condition and \cite[Proposition 4.1.2]{CP-book} that for all $d \in   \mathcal{T}_{\mathcal{F}(\bar{z})}(\mathbf{0})   $, 
	\begin{align*}
		0 \leq \left( \Phi(\bar{z}, \cdot) \right)^{\prime}( \mathbf{0};d ) 
		= \Theta^{\prime}( \bar{z} ; d ),
	\end{align*}
	which completes the proof by $ \mathcal{T}_{\mathcal{F}(\bar{z})}(\mathbf{0}) = \R^{\bar{N}} $ from Lemma \ref{lem:intFd}.
   
\end{proof}

By the proof of Theorem \ref{th:2.1-r}, we can also figure out the reason why using $ \mathcal{A}^{\delta}(z^{k}) $ rather than $ \mathcal{A} (z^{k}) $ in Algorithm \ref{alg2}.
Essentially, we require $ \mathcal{A} (\bar{z}) $ to be covered by the $ k $th index set used in each subproblem when the $ z^{k} $ approaches $ \bar{z} $.
Take $ H(z):=|z| $ for all $ z \in \R $, where $ I=2 , P_{1}=\R_{+}, P_{2}=\R_{-}  $, as an example. For $ z^{k}:=1/k \rightarrow 0 =: \bar{z} $, $ \mathcal{A} (z^{k}) \equiv \{ 1 \} \not\supseteq \{1,2\} = \mathcal{A} (\bar{z}) $. 
In the next subsection, we could further illustrate the influence of $ \delta>0 $ by complexity bounds. 

Finally, we extend the convergence results to second-order d-stationarity under additional assumptions.
\begin{corollary}
	Let $ \{ \beta_{\ell}, \, \ell \in [L] \} $ satisfy \eqref{eq:br-threshold-2}.
	Set the initial point $ z^{1} $ satisfying $ \Theta(z^{1}) \leq \Theta(z^{0}) $ with $ z^{0} $ defined in \eqref{def:z0}.  
    Assuming that $g$ is convex.
	\begin{itemize}
        \item[(a)] If Algorithm \ref{alg2} terminates at $k$th iteration, then $ z^{k} $ is a second-order d-stationary point of \eqref{l1pen_r}.
        \item[(b)] If Algorithm \ref{alg2} does not terminate in a finite number of steps, then any accumulation point of the infinite sequence $ \{ z^{k} \} $ is a second-order d-stationary point of \eqref{l1pen_r}.
    \end{itemize} 
\end{corollary}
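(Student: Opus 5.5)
The plan is to reduce both parts to Corollary \ref{cor:D-SD}. That corollary says that, under \eqref{eq:br-threshold-2} and convexity of $g$, every point of $\mathcal{D}_{1}\cap lev_{\leq \Theta(z^{0})}\Theta$ lies in $\mathcal{SD}_{1}$. Since \eqref{l1pen_r} is merely a reorganization of \eqref{l1pen}, $\Theta$, $\Theta^{\prime}$ and $\Theta^{(2)}$ are the same objects in both formulations. Hence second-order d-stationarity for \eqref{l1pen_r} coincides with membership in $\mathcal{SD}_{1}$. It therefore suffices to show two things for the candidate point: it is d-stationary, and its objective value does not exceed $\Theta(z^{0})$.

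For (a), suppose Algorithm \ref{alg2} terminates at iteration $k$. Lemma \ref{lem:2.1-r} gives $\Theta^{\prime}(z^{k};d)\geq 0$ for all $d$, so $z^{k}\in\mathcal{D}_{1}$. By Lemma \ref{lem:2.2-r}, $z^{k}\in lev_{\leq\Theta(z^{1})}\Theta$, and by assumption $\Theta(z^{1})\leq\Theta(z^{0})$. Thus $z^{k}\in\mathcal{D}_{1}\cap lev_{\leq\Theta(z^{0})}\Theta$, and Corollary \ref{cor:D-SD} gives $z^{k}\in\mathcal{SD}_{1}$.

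For (b), let $\bar z$ be an accumulation point of $\{z^{k}\}$, say $z^{k_t}\to\bar z$. Theorem \ref{th:2.1-r} gives $\bar z\in\mathcal{D}_{1}$. Lemma \ref{lem:2.2-r} gives $\Theta(z^{k_t})\leq\Theta(z^{1})\leq\Theta(z^{0})$ for every $t$. Since $\Theta$ is continuous (it is built from locally Lipschitz $g$, $\psi_{\ell-1}$ and the $\ell_1$ norm), the level set is closed. Passing to the limit yields $\Theta(\bar z)\leq\Theta(z^{0})$. Corollary \ref{cor:D-SD} then applies again.

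I do not expect a genuine obstacle. The only points to check are the following:
\begin{itemize}
\item the constants $K_g, K_\ell$ in \eqref{eq:Lip} are taken uniformly over $lev_{\leq\Theta(z^{0})}\Theta$, which contains every point we feed to Corollary \ref{cor:D-SD};
\item the structural assumptions behind \eqref{l1pen_r} (smooth $g$, and components of $\psi$ that are smooth or PA) are compatible with the twice directional differentiability used in Corollary \ref{cor:D-SD}, which was noted in the introduction;
\item the monotonicity $\Theta(z^{k+1})\leq\Theta(z^{k})$ from step 6 of the algorithm, already recorded in Lemma \ref{lem:2.2-r}.
\end{itemize}
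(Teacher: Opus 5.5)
Your proposal is correct and matches the paper's proof essentially step for step. You obtain d-stationarity from Lemma \ref{lem:2.1-r} (termination case) or Theorem \ref{th:2.1-r} (accumulation points), get $\Theta(z)\leq\Theta(z^{1})\leq\Theta(z^{0})$ from Lemma \ref{lem:2.2-r} together with continuity of $\Theta$, and then conclude with Corollary \ref{cor:D-SD}.
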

\begin{proof}
	According to Theorem \ref{th:2.1-r} and Lemmas \ref{lem:2.1-r}, any termination point $z$ after a finite number of iterations or accumulation point $z$ of the infinite sequence satisfies $ \Theta^{\prime}(z;d)\geq 0 $ for all $d$.
	Additionally, it follows from Lemma \ref{lem:2.2-r} and  the continuity of $ \Theta $ that any termination point $z$ after a finite number of iterations or accumulation point $z$ of the infinite sequence satisfies 
    \begin{align*}
        \Theta(z) \leq  \Theta(z^{1}) \leq \Theta(z^{0}).
    \end{align*}
    Together with Corollary \ref{cor:D-SD}, we yield the result.
   
\end{proof}

\subsection{Complexity bounds}
\label{subsec:1-d.2-r}
In this subsection, we give the complexity bounds for finding $\epsilon$-approximate first-order and second-order d-stationary points of \eqref{l1pen_r} defined in Definitions \ref{def:eps-1d} and \ref{def:eps-2d}, respectively. 
The following theorem provides the complexity bound for finding an $\epsilon$-approximate first-order d-stationary point.
\begin{theorem}
	\label{th:2.2-r}
	For any $ \epsilon >0 $, there exists $ k \leq  \bar{k}(\epsilon) $, where
	\begin{align}
		\label{def:kbar_eps-r}
		\bar{k}(\epsilon) := \left\lceil \kappa_{1} + 6  \gamma^{2} \eta_{1}^{-1} \rho_{1}^{-1} \left( \Theta(z^{1})  - \Theta^{*} \right) \epsilon^{-2} \right\rceil, 
	\end{align}  
	such that $ z^{k} $ is an $ \epsilon $-approximate d-stationary point for \eqref{l1pen_r}, where the positive integer $ \kappa_{1} $ is defined in Lemma \ref{lem:2.3-r} and
	 \begin{align*} 
		\gamma:= \max & \Biggl\{  
	 		\frac{ 2 \left( \Theta(z^{1}) - \Theta^{*} \right) }{ \left( \sqrt{ 2 \rho_{1}^{-1} \Theta(z^{1}) } + 2 \rho_{1}^{-1} K_{F} \right)^{2}} ,\,
			 \frac{ 8   \left( \Theta(z^{1}) - \Theta^{*} \right)  }{ \delta^{2} }  , \,\Biggr. \\
	 		&~ \Biggl.  
			\rho_{1} \eta_{2} + 2( \bar{K}_{F} + \sqrt{J} \bar{K}_{c} ), \, 
      \left( \frac{\eta_{2}^{2}}{1-\eta_{1}} +2 \right)( \bar{K}_{F} + \sqrt{J} \bar{K}_{c} ), \, 
			1
			\Biggr\} . 
	 \end{align*} 
\end{theorem}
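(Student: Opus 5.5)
The plan is a standard counting argument. I first show that a successful iteration with a short step yields an $\epsilon$-approximate d-stationary point, and then use \eqref{eq:ssk2_bound-r} to show that such an iteration occurs within $\bar{k}(\epsilon)$ iterations.

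\emph{Step 1: a short step certifies approximate stationarity.} Fix an iteration $k$ with $\|s^{k}\| \leq \min\{\delta/2, \epsilon/\gamma\}$ and set $y:=z^{k}+s^{k}$; the first and second terms in $\gamma$ are presumably there to guarantee $\epsilon/\gamma\leq\delta/2$ and $\epsilon/\gamma$ below the bound \eqref{eq:sk_bound-r}. Then $y\in\mathbb{B}(z^{k};\epsilon)$, since $\gamma\geq 1$.

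Next I show that the constraint $s\in\mathcal{F}_{k}^{\delta}$ is locally inactive at $s^{k}$. If $i\in\mathcal{A}(y)$, then $\mathbf{d}(z^{k},P_{i})\leq\|y-z^{k}\|=\|s^{k}\|\leq\delta$, so $\mathcal{A}(y)\subseteq\mathcal{A}_{k}^{\delta}$. Lemma~\ref{lem:intFd} gives $y\in int(\cup_{i\in\mathcal{A}(y)}P_{i})$, hence $s^{k}\in int(\mathcal{F}_{k}^{\delta})$. Since $s^{k}$ minimizes $\Phi_{k}$ over $\mathcal{F}_{k}^{\delta}$, this yields $\Phi_{k}^{\prime}(s^{k};d)\geq 0$ for every $d$, as in the proof of Lemma~\ref{lem:2.1-r}.

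It then remains to bound $\Phi_{k}^{\prime}(s^{k};d)-\Theta^{\prime}(y;d)$ for unit $d$.
\begin{itemize}
\item The $H$ terms coincide, because $\Phi_{k}$ evaluates $H$ at $z^{k}+s^{k}=y$.
\item The smooth part contributes $(\nabla F(z^{k})-\nabla F(y)+\rho_{k}s^{k})^{\top}d$. By \eqref{eq:Kbar-r}, Lemma~\ref{lem:2.2-r} and $\rho_{k}\leq\rho$, this is at most $(\bar{K}_{F}+\rho)\|s^{k}\|$.
\item The $|c_{j}|$ terms are the delicate part.
\end{itemize}

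\emph{Main obstacle: the kinks of $|c_{j}|$.} The linearized value $c_{j}(z^{k})+\nabla c_{j}(z^{k})^{\top}s^{k}$ and the true value $c_{j}(y)$ differ by only $\frac{\bar{K}_{c}}{2}\|s^{k}\|^{2}$, but they may have different signs. In that case the directional derivatives of $|\cdot|$ at the two points differ by an $O(1)$ amount rather than $O(\|s^{k}\|)$.

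My first attempt is to exploit the freedom in choosing $y$ in Definition~\ref{def:eps-1d}. For each index $j$ with linearized value $0$ but $c_{j}(y)\neq 0$, perturb $y$ by $O(\|s^{k}\|^{2})$ along $\nabla c_{j}$ so that the kink is reproduced, while staying inside the same pieces of $H$. When the signs differ but the linearized value is nonzero, both values are $O(\|s^{k}\|^{2})$, and the one-sided derivatives agree up to the gradient difference $\bar{K}_{c}\|s^{k}\|$.

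If this perturbation is unworkable, the fallback is to compare function values instead of derivatives. Use $\Phi_{k}(s^{k}+\tau d)\geq\Phi_{k}(s^{k})$ together with the two-sided model error $\frac{\bar{K}_{F}+\sqrt{J}\bar{K}_{c}}{2}\|\cdot\|^{2}$ from the proof of Lemma~\ref{lem:2.3-r}, taking $\tau$ proportional to $\|s^{k}\|$. The constants $\rho_{1}\eta_{2}+2(\bar{K}_{F}+\sqrt{J}\bar{K}_{c})$ and $(\frac{\eta_{2}^{2}}{1-\eta_{1}}+2)(\bar{K}_{F}+\sqrt{J}\bar{K}_{c})$ in $\gamma$ (note $\rho\leq$ their maximum) suggest an estimate of the form $\Theta^{\prime}(y;d)\geq-\gamma\|s^{k}\|\geq-\epsilon$.

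\emph{Step 2: counting.} By Lemma~\ref{lem:2.3-r}, at most $\kappa_{1}$ of the first $\bar{k}(\epsilon)$ iterations are unsuccessful. Hence more than $6\gamma^{2}\eta_{1}^{-1}\rho_{1}^{-1}(\Theta(z^{1})-\Theta^{*})\epsilon^{-2}$ of them are successful. Suppose every successful iteration had $\|s^{k}\|>\epsilon/\gamma$ (up to the constant factor absorbed in the $6$). Then $\sum_{\hat{k}\in S_{k}}\|s^{\hat{k}}\|^{2}$ would exceed the bound $\frac{2(\Theta(z^{1})-\Theta^{*})}{\eta_{1}\rho_{1}}$ in \eqref{eq:ssk2_bound-r}, a contradiction. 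So some $k\leq\bar{k}(\epsilon)$ satisfies the hypothesis of Step~1, and $z^{k}$ is an $\epsilon$-approximate d-stationary point.
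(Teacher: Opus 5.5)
\textbf{Gap in Step~1: the witness $y=z^{k}+s^{k}$ does not work.} Your Step~2 is the paper's counting argument: take the successful iteration with the smallest $\|s^k\|$, so that $\|s^k\|\le\epsilon/(\sqrt3\gamma)$. Step~1, however, fails, and the problem is not only one of proof technique. With $y=z^k+s^k$ the desired bound is false in general. Take $\Theta(z)=z^2+|z+z^2|$ on $\R$, so $H\equiv0$ and $J=1$. For small $z^k>0$ the model minimizer sits exactly on the linearized kink, $s^k=-z^k(1+z^k)/(1+2z^k)$. Hence $y=z^k+s^k=(z^k)^2/(1+2z^k)>0$ lies just off the true kink, and $\Theta^{\prime}(y;-1)=-(1+4y)<-1$ even though $\|s^k\|\to0$.

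None of your repairs closes this:
\begin{itemize}
\item When the signs differ and the linearized value is nonzero, the derivative contributions are $\pm\nabla c_j^{\top}d$ with opposite signs. They differ by about $2|\nabla c_j(z^k)^{\top}d|$, not by $\bar K_c\|s^k\|$.
\item Moving $y$ onto the common zero set of several $c_j$, while staying in the same pieces of $H$, needs a constraint qualification (e.g.\ linear independence of those gradients). No such condition is assumed.
\item The fallback $\Phi_k(s^k+\tau d)\ge\Phi_k(s^k)$ with $\tau\sim\|s^k\|$ only controls secant slopes at scale $\|s^k\|$. As $\tau\downarrow0$ the $\mathcal{O}(\|s^k\|^2)$ model errors dominate, so you get no bound on $\Theta^{\prime}(y;d)$. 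The example above has secant slopes near $+1$ at that scale but derivative below $-1$.
\end{itemize}

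\textbf{Missing idea: use a proximal point of the true $\Theta$ as the witness.} Define
\[
y^{k+1}\in\argmin_{y\in\cup_{i\in\mathcal{A}_k^{\delta}}P_i}\Bigl(\Theta(y)+\tfrac{\gamma}{2}\|y-z^k\|^2\Bigr).
\]
Your function-value comparison is the right tool, but the paper uses it to locate $y^{k+1}$, not to estimate a derivative. Let $L:=\bar K_F+\sqrt J\bar K_c$. Compare with the feasible point $z^k+s^k$, use both sides of the model error and the minimality of $s^k$ for $\Phi_k$. This gives $(\gamma-\rho_k-L)\|y^{k+1}-z^k\|^2\le(\gamma-\rho_k+L)\|s^k\|^2$. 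Since $\gamma\ge\rho+2L$, it follows that $\|y^{k+1}-z^k\|\le\sqrt3\|s^k\|\le\epsilon/\gamma$.

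The roles of the first two entries of $\gamma$ are also different from what you guessed. The $\delta$-entry does not give $\epsilon/\gamma\le\delta/2$, which fails for large $\epsilon$. It gives the $\epsilon$-independent bound $\|y^{k+1}-z^k\|\le(2(\Theta(z^1)-\Theta^*)/\gamma)^{1/2}\le\delta/2$, so $y^{k+1}$ is interior to $\cup_{i\in\mathcal{A}_k^{\delta}}P_i$. The first entry keeps the comparison points inside the region where \eqref{eq:Kbar-r} holds.

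First-order optimality of the prox problem at an interior point then gives directly $\Theta^{\prime}(y^{k+1};d)\ge-\gamma(y^{k+1}-z^k)^{\top}d\ge-\epsilon$ for all $\|d\|=1$. This is a derivative of the true $\Theta$ at a minimizer of a problem built from the true $\Theta$, so the linearized-versus-true kink mismatch never arises.
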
 
\begin{proof}
	For any $ \epsilon >0 $, we denote $ \bar{k}(\epsilon) $ as $ \bar{k} $ for brevity.  
	If $ \Theta(z^{1})  = \Theta^{*} $, then it follows from the optimality condition that $ z^{1} $ is a d-stationary point for \eqref{l1pen_r}. Thus, the conclusion holds naturally since $ \kappa_{1} \geq 1 $.

	If $ \Theta(z^{1})  > \Theta^{*} $, by the definition of $ \bar{k} $ and Lemma \ref{lem:2.3-r}, we have 
		\begin{align}
			\label{eq:2.11-r}
			| S_{\bar{k}} |
			= \bar{k} - | \bar{S}_{\bar{k}} | 
			\geq \bar{k} - \kappa_{1}
			= \left\lceil    6    \gamma^{2}  \eta_{1}^{-1} \rho_{1}^{-1} \left( \Theta(z^{1})  - \Theta^{*} \right) \epsilon^{-2}   \right\rceil  \geq 1,
		\end{align}
	which indicates $ S_{\bar{k}} \neq \emptyset $. Denote $ k \in  \argmin_{t \in S_{\bar{k}} }  \|s^{t} \|^{2}    $. Then it follows from \eqref{eq:ssk2_bound-r} that
	\begin{align*}
		\|s^{k} \|^{2} 
		= \min_{t \in S_{\bar{k}} }   \|s^{t} \|^{2}   
		\leq | S_{\bar{k}} |^{-1} \sum_{t \in S_{\bar{k}} } \|s^{t} \|^{2}   
		\leq  2 | S_{\bar{k}} |^{-1}   \eta_{1}^{-1} \rho_{1}^{-1}  \left( \Theta(z^{1})  - \Theta^{*} \right)  
		 \leq \frac{ \epsilon^{2} }{ 3   \gamma^{2}   },
	\end{align*} 
	where the last inequality is derived from \eqref{eq:2.11-r}. Taking the square root of both sides yields 
	\begin{align}
		\label{eq:(a)}
		\| s^{k} \| \leq \frac{\epsilon}{\sqrt{3} \gamma}  .
	\end{align}
	For $ k \in  \argmin_{t \in S_{\bar{k}} }  \|s^{t} \|^{2}   $, define 
		\begin{align}
      \label{def:yk}
			 y^{k+1} \in \argmin_{   y \in \cup_{ i \in \mathcal{A}_{k}^{\delta} } P_{i}   } \left( \Theta(y) + \frac{\gamma }{2} \| y - z^{k} \|^{2}  \right) , 
      ~ \Delta:= \left( \frac{2 \left( \Theta(z^{1}) - \Theta^{*} \right) }{ \gamma } \right)^{\frac{1}{2}}.
		\end{align}
		Since $ \cup_{ i \in \mathcal{A}_{k}^{\delta} } P_{i} $ is nonempty and closed, and $  ( \Theta(\cdot) + \frac{\gamma }{2} \| \cdot - z^{k} \|^{2}  ) $ is proper, continuous and level-bounded, we know that $ y^{k+1} $ is well-defined.  
		Then it follows from the definition of $ y^{k+1} $ and $ s^{k} \in \mathcal{F}_{k}^{\delta} = \{ z-z^{k} \mid z \in \cup_{ i \in \mathcal{A}_{k}^{\delta} } P_{i} \} $ that
		\begin{align}
			& \Theta(y^{k+1}) + \frac{ \gamma }{2} \| y^{k+1} - z^{k} \|^{2} \notag \\
			\leq \,& \Theta(z^{k} + s^{k} ) + \frac{ \gamma }{2} \|s^{k} \|^{2} \notag\\
			\leq \,& \Phi_{k}(s^{k} ) + \frac{ \gamma  - \rho_{k} + \bar{K}_{F} +\sqrt{J} \bar{K}_{c} }{2} \|s^{k} \|^{2} \notag \\
			\leq \,& \min_{  s \in \mathcal{F}_{k}^{\delta} ,  \|s\|\leq  \Delta }\Phi_{k}(s  ) + \frac{ \gamma  - \rho_{k} + \bar{K}_{F} +\sqrt{J} \bar{K}_{c} }{2} \|s^{k} \|^{2} \notag \\
			\leq \,& \min_{   s \in \mathcal{F}_{k}^{\delta} ,  \|s\|\leq  \Delta }\left( \Theta ( z^{k} + s  ) + \frac{ \rho_{k} + \bar{K}_{F} +\sqrt{J} \bar{K}_{c} }{2} \|s  \|^{2}\right) \notag \\ 
        &+ \frac{ \gamma  - \rho_{k} + \bar{K}_{F} +\sqrt{J} \bar{K}_{c} }{2} \|s^{k} \|^{2} \notag \\
			\leq \,&  \Theta ( y^{k+1}   ) + \frac{ \rho_{k} + \bar{K}_{F} +\sqrt{J} \bar{K}_{c} }{2} \|y^{k+1} - z^{k}  \|^{2} + \frac{ \gamma  - \rho_{k} + \bar{K}_{F} +\sqrt{J} \bar{K}_{c} }{2} \|s^{k} \|^{2}, \label{eq:2.12-r}
		\end{align}
		where the second inequality uses \eqref{eq:new2.3-r}, 
		the third inequality uses 
		$$ \Phi_{k}(s^{k} ) = \min_{   s \in \mathcal{F}_{k}^{\delta}   }\Phi_{k}(s  ) \leq \min_{   s \in \mathcal{F}_{k}^{\delta} ,  \|s\|\leq \Delta  }\Phi_{k}(s  ) $$ 
		from the definition of $ s^{k} $, 
		the fourth inequality applies \eqref{eq:Kbar-r}, Lemma \ref{lem:2.2-r} and 
		\begin{align*}
			 \Delta  
			&\leq \left( \frac{2 \left( \Theta(z^{1}) - \Theta^{*} \right) }{ 2 \left( \Theta(z^{1}) - \Theta^{*} \right) } \left( \sqrt{ 2 \rho_{1}^{-1} \Theta(z^{1}) } + 2 \rho_{1}^{-1} K_{F} \right)^{2} \right)^{\frac{1}{2}} \\
			&= \sqrt{ 2 \rho_{1}^{-1} \Theta(z^{1}) } + 2 \rho_{1}^{-1} K_{F}
		\end{align*}
		from definitions of $ ( \Delta ,  \gamma   ) $, 
		the last inequality uses $( y^{k+1} - z^{k}) \in \mathcal{F}_{k}^{\delta} $ and  
		\begin{align}
			\label{eq:ykzyD}
			\| y^{k+1} - z^{k} \| \leq \Delta
		\end{align}
		from
		$
			\Theta^{*}  + \frac{ \gamma }{2} \| y^{k+1} - z^{k} \|^{2}
			\leq \Theta( y^{k+1} )  + \frac{  \gamma  }{2} \| y^{k+1} - z^{k} \|^{2}
			\leq \Theta (z^{k})
			\leq \Theta (z^{1}) 
		$.
		Reorganizing \eqref{eq:2.12-r} and applying $   \gamma \geq \rho  + 2( \bar{K}_{F} +\sqrt{J} \bar{K}_{c})    \geq \rho_{k} + 2( \bar{K}_{F} +\sqrt{J} \bar{K}_{c}) $, we have
		\begin{align}
			\label{eq:2.13-r}
			\|y^{k+1} - z^{k} \|
			\leq \left( \frac{  \gamma  - \rho_{k} + \bar{K}_{F} +\sqrt{J} \bar{K}_{c} }{  \gamma  - \rho_{k} - \bar{K}_{F} -\sqrt{J} \bar{K}_{c} } \right)^{\frac{1}{2}} \|s^{k} \|   
			\leq \sqrt{3} \|s^{k} \| 
			\leq \frac{ \epsilon  }{   \gamma   }
			\leq \epsilon ,
		\end{align}
		where the third inequality is derived from result (a), the last inequality comes from $ \gamma\geq 1 $.
		The remaining is to show $ \Theta^{\prime}( y^{k+1} ;d ) \geq -\epsilon $ for all $d$ with $ \|d\|=1 $. It follows from the optimality condition of $ y^{k+1} $ \cite[Lemma 3.1]{JWC2025} 
        that for any $ d \in   \mathcal{T}_{ \cup_{ i \in \mathcal{A}_{k}^{\delta} } P_{i} } ( y^{k+1} )   $, 
		\begin{align}
			\label{eq:4.15}
			0 \leq \Theta^{\prime}( y^{k+1} ; d ) +  \gamma  ( y^{k+1} - z^{k} )^{\top} d.
		\end{align}
			We can simplify $ \mathcal{T}_{ \cup_{ i \in \mathcal{A}_{k}^{\delta} } P_{i} } ( y^{k+1} )    $ as $ \R^{\bar{N}} $
			by proving $ y^{k+1} \in   int( \cup_{i \in \mathcal{A}_{k}^{\delta} } P_{i} )  $ as follows. 
			For any $ z \in \mathbb{B}( z^{k} ; \delta ) $, it follows from the definition of $ \{ P_{i}, \, i \in [I] \} $ that there exists $ \bar{i} \in [I] $ such that $ z \in P_{\bar{i}} $. 
			Thus, we have $ \mathbf{d}( z^{k} , P_{\bar{i}} ) \leq \| z^{k} - z \| \leq \delta  $, which implies that $ \bar{i} \in \mathcal{A}_{k}^{\delta}= \{ i \in [I] \mid \mathbf{d}( z^{k} , P_{i} ) \leq \delta \} $ and $ z \in   \cup_{ i \in \mathcal{A}_{k}^{\delta} } P_{i}  $.
			Together with the arbitrariness of $ z \in \mathbb{B}( z^{k} ; \delta ) $, it indicates that $ \mathbb{B}( z^{k} ; \delta ) \subseteq \cup_{ i \in \mathcal{A}_{k}^{\delta} } P_{i} $.
			On the other hand, it follows from \eqref{eq:ykzyD} and the definitions of $ \Delta, \gamma $ that
			\begin{align*}
				\| y^{k+1} - z^{k} \| 
				\leq \left( \frac{2 \left( \Theta(z^{1}) - \Theta^{*} \right) }{ \gamma } \right)^{\frac{1}{2}}  
				\leq \left( \frac{2 \delta^{2} \left( \Theta(z^{1}) - \Theta^{*} \right) }{  8   \left( \Theta(z^{1}) - \Theta^{*} \right) } \right)^{\frac{1}{2}} 
				= \frac{\delta}{2 },
			\end{align*}
			which implies that for any $ z\in \mathbb{B}( y^{k+1} ; \frac{\delta}{2  } ) $, 
			\begin{align*}
				\|z-z^{k} \| 
				\leq \|z-y^{k+1} \| + \| y^{k+1} - z^{k} \| 
				\leq \frac{\delta}{2  } + \frac{\delta}{2 } 
				= \delta ,   
			\end{align*}
			i.e., $ \mathbb{B}( y^{k+1} ; \frac{\delta}{2  } ) \subseteq  \mathbb{B}( z^{k} ; \delta ) $.
			Together with $ \mathbb{B}( z^{k} ; \delta ) \subseteq \cup_{ i \in \mathcal{A}_{k}^{\delta} } P_{i} $, 
			it yields that $ \mathbb{B}( y^{k+1} ; \frac{\delta}{2 } ) \subseteq \cup_{ i \in \mathcal{A}_{k}^{\delta} } P_{i} $, which implies that 
			$$ 
				\mathcal{T}_{ \cup_{ i \in \mathcal{A}_{k}^{\delta} } P_{i} } ( y^{k+1} )    
				= \R^{\bar{N}} .$$
			Applying it on \eqref{eq:4.15}, we yield $ 0 \leq \Theta^{\prime}( y^{k+1} ; d ) +  \gamma  ( y^{k+1} - z^{k} )^{\top} d $ for all $ d \in \R^{\bar{N}} $. 
		Together with the second to last inequality in \eqref{eq:2.13-r}, 
		it implies that for any $d$ with $ \|d\|=1 $, 
		\begin{align}
      \label{eq:op-res-2}
			\Theta^{\prime}( y^{k+1} ; d ) 
			\geq -  \gamma  ( y^{k+1} - z^{k} )^{\top} d
			\geq - \gamma  \| y^{k+1} - z^{k} \|
			\geq -\epsilon ,
		\end{align}
		which yields the result. 
   
\end{proof}
  
Theorem \ref{th:2.2-r} indicates that for any $ \epsilon>0 $, to obtain an $ \epsilon $-approximate d-stationary point of \eqref{l1pen_r}, the number of iterations is no more than 
\begin{align*}
	\bar{k} \left( \epsilon  \right)
	 = \left\lceil \kappa_{1} +   6  \gamma^{2} \eta_{1}^{-1} \rho_{1}^{-1}   \left( \Theta(z^{1})  - \Theta^{*} \right) \epsilon^{-2} \right\rceil   
	 = \mathcal{O}(\kappa_{1} + \epsilon^{-2}   )  ,
\end{align*}
and the number of successful iterations is no more than 
\begin{align}
	\label{eq:com_suc-r}
	\left\lceil     6  \gamma^{2} \eta_{1}^{-1} \rho_{1}^{-1}   \left( \Theta(z^{1})  - \Theta^{*} \right) \epsilon^{-2} \right\rceil   
	 =  \mathcal{O}(  \epsilon^{-2}   )  ,
\end{align}
which is consistent with the existing results \cite{CGT2011,GST2021} for the case where $ H $ is convex.  
Notice that $ \gamma =\mathcal{O}(\delta^{-2}) $ for sufficiently small $ \delta>0 $. A small $ \delta $ reduces the computational cost per iteration while increasing the number of iterations required. 
Hence, Theorem \ref{th:2.2-r} demonstrates that $ \lim_{\delta \downarrow 0} \bar{k}  ( \epsilon   ) = \infty $ for any $ \epsilon>0 $, which also explains from the perspective of complexity why using $ \mathcal{A}^{\delta}(z^{k}) $ rather than $ \mathcal{A} (z^{k}) $ (i.e., $ \mathcal{A}^{\delta}(z^{k}) $ with $ \delta=0 $) in Algorithm \ref{alg2}.   

Next, we extend the convergence results to $ \epsilon $-approximate second-order d-stationarity under additional assumptions.
\begin{corollary}
	\label{cor:2.4-r}
	Set $ \{ \beta_{\ell} >0, \ell \in [L] \} $ satisfying \eqref{eq:br-threshold-3} and $z^{1} \in lev_{\leq \Theta(z^{0}) } \Theta  $. 
	Assume that $ g $ is convex. 
	For any $ \epsilon \in (0,1) $, there exists $ k \leq \bar{k}(\epsilon) $, where $ \bar{k}(\epsilon) $ is defined in \eqref{def:kbar_eps-r}, 
	such that $ z^{k} $ is an $ \epsilon $-approximate second-order d-stationary point for \eqref{l1pen_r}.
\end{corollary}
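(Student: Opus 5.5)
The plan is to combine Theorem \ref{th:2.2-r} with Theorem \ref{thm:eD-eSD}, applied at the auxiliary point $y^{k+1}$ constructed in the proof of Theorem \ref{th:2.2-r}. Fix $\epsilon\in(0,1)$. Theorem \ref{th:2.2-r} gives $k\le\bar k(\epsilon)$, namely $k\in\argmin_{t\in S_{\bar k}}\|s^t\|^2$, together with the point $y^{k+1}$ defined in \eqref{def:yk}. That proof shows two facts: $\|y^{k+1}-z^k\|\le\epsilon$ by \eqref{eq:2.13-r}, and $\Theta'(y^{k+1};d)\ge-\epsilon$ for all $\|d\|=1$ by \eqref{eq:op-res-2}. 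Hence $y^{k+1}\in\mathcal{D}_1^{\epsilon}\cap\mathbb{B}(z^k;\epsilon)$. In the degenerate case $\Theta(z^1)=\Theta^*$, I take $k=1$ and $y=z^1$ instead; then $z^1$ is a global minimizer, so $z^1\in\mathcal{D}_1\subseteq\mathcal{D}_1^{\epsilon}$ and $\Theta(z^1)\le\Theta(z^0)$ by assumption.

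The remaining ingredient, needed before invoking Theorem \ref{thm:eD-eSD}, is $\Theta(y^{k+1})\le\Theta(z^0)$. By the definition of $y^{k+1}$ as a minimizer of $\Theta(\cdot)+\frac{\gamma}{2}\|\cdot-z^k\|^2$ over $\cup_{i\in\mathcal{A}_k^\delta}P_i$, and since $z^k$ is feasible for this problem (Lemma \ref{lem:intFd}), we get
\begin{align*}
\Theta(y^{k+1})\le\Theta(y^{k+1})+\tfrac{\gamma}{2}\|y^{k+1}-z^k\|^2\le\Theta(z^k).
\end{align*}
Lemma \ref{lem:2.2-r} then gives $\Theta(z^k)\le\Theta(z^1)$, and the hypothesis $z^1\in lev_{\le\Theta(z^0)}\Theta$ completes the chain. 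Consequently $y^{k+1}\in\mathcal{D}_1^{\epsilon}\cap lev_{\le\Theta(z^0)}\Theta$.

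Now I apply Theorem \ref{thm:eD-eSD}, whose hypotheses are exactly \eqref{eq:br-threshold-3}, convexity of $g$, and $\epsilon\in[0,1)$. It yields $y^{k+1}\in\mathcal{SD}_1^{\epsilon}$. Together with $y^{k+1}\in\mathbb{B}(z^k;\epsilon)$, this is precisely Definition \ref{def:eps-2d} for $z^k$. Since \eqref{l1pen_r} is merely a reorganization of \eqref{l1pen}, the definitions coincide.

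The only delicate point is ensuring that the bound for $k$ comes from the same iterate, and that the level-set condition transfers from $z^k$ to the nearby point $y^{k+1}$ rather than to $z^k$ itself. The argument above handles this transfer by the minimizing property of $y^{k+1}$. Everything else is direct citation of earlier results.
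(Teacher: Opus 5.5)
Your proposal is correct and matches the paper's proof. You invoke Theorem \ref{th:2.2-r} to get $k\le\bar k(\epsilon)$ and the auxiliary point $y^{k+1}$, transfer the level-set bound through $\Theta(y^{k+1})\le\Theta(z^k)\le\Theta(z^1)\le\Theta(z^0)$, and conclude with Theorem \ref{thm:eD-eSD}; your separate handling of the degenerate case $\Theta(z^1)=\Theta^*$, where $y^{k+1}$ is never constructed in the proof of Theorem \ref{th:2.2-r}, is a small tidying step the paper's proof glosses over.
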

\begin{proof}
	It follows from Theorem \ref{th:2.2-r} that there exists $ k \leq  \bar{k}(\epsilon) $ such that $ \|z^{k} - y^{k+1} \| \leq \epsilon $ and $ \Theta^{\prime}(y^{k+1} ;d ) \geq - \epsilon $ for any $d$ with $ \|d\|=1 $, where 
    \begin{align*}
        y^{k+1} \in \argmin_{   y \in \cup_{ i \in \mathcal{A}_{k}^{\delta} } P_{i}    } \left( \Theta(y) + \frac{\gamma}{2} \|y-z^{k} \|^{2} \right) 
    \end{align*}
    with $ \gamma $ defined in Theorem \ref{th:2.2-r}. 
    The definition of $ y^{k+1} $ and $ z^{k} \in \cup_{ i \in \mathcal{A}_{k}^{\delta} } P_{i} $ imply that 
	\begin{align*}
		\Theta( y^{k+1} ) 
		\leq \Theta(y^{k+1}) + \frac{\gamma}{2} \|y^{k+1}-z^{k} \|^{2}
		\leq \Theta(z^{k})  
		\leq \Theta(z^{1})
		\leq \Theta(z^{0}),
	\end{align*}
	where the third inequality follows from Lemma \ref{lem:2.2-r}, the last inequality uses $z^{1} \in lev_{\leq \Theta(z^{0}) } \Theta  $.
	Together with Theorem \ref{thm:eD-eSD} and $ \Theta^{\prime}(y^{k+1} ;d ) \geq - \epsilon $ for any $d$ with $ \|d\|=1 $, it implies that $ \Theta^{(2)}(y^{k+1};d ) \geq -\epsilon $ for any $ d $ with $ \|d\|=1 $ and $ \Theta^{\prime}(y^{k+1} ;d ) = - \epsilon $, 
	which completes the proof.
   
\end{proof}

\section{Numerical experiments}
\label{sec:4.4} 
This section carries out numerical experiments for training the Elman RNN with a single unidirectional hidden layer and Tikhonov regularizers 
\begin{equation} 
    \label{multicom-RNN-test}
    \begin{aligned} 
        \min_{\theta} ~&  \frac{1}{NT} \sum_{i=1}^{N} \sum_{t=1}^{T} \left\| y_{t}^{i} - \left(A \sigma\left( W (\dots \sigma( V x_{1}^{i} +b ) \dots) + V x_{t}^{i} +b \right) + c \right)\right\|^{2}  \\
        &+  \lambda_{1}\|A\|_{\rm F}^{2} + \lambda_{2}\|W\|_{\rm F}^{2} +\lambda_{3}\|V\|_{\rm F}^{2} + \lambda_{4} \|b\|^{2} + \lambda_{5} \|c\|^{2}  ,
    \end{aligned}
\end{equation}
where $  \lambda_{1},\lambda_{2},\lambda_{3},\lambda_{4},\lambda_{5} \in \R_{++} $, the activator $ \sigma(\cdot) = [\cdot]_{+} $, the variables 
\begin{align*}
    &\theta:=  \left( \operatorname{vec}(A)^{\top} ,\operatorname{vec}(W)^{\top} ,\operatorname{vec}(V)^{\top} ,b^{\top} ,c^{\top}\right) ^{\top} \\
    &\text{with } A \in \R^{N_{y} \times N_{h} }, \, W \in \R^{N_{h} \times N_{h} }, \, V \in \R^{N_{h} \times N_{x} }, \, b \in \R^{N_{h}}, \,  c \in \R^{N_{y}}, 
\end{align*}
the dataset $ \{ (x_{t}^{i} \in \R^{N_{x}} , y_{t}^{i} \in \R^{N_{y}} ), \, i\in [N], t \in [T] \}   $, and $ N, T,N_{x},N_{h},N_{y} \in \mathbb{Z}_{++}  $. 

By introducing auxiliary variables $ h_{t}^{i}, u_{t}^{i} \in \R^{N_{h}}$ and $ v_{t}^{i} \in \R^{N_{y}} $ for all $ i \in [N], t\in [T] $, we obtain
the following constrained form 
\begin{equation*} 
    \begin{aligned} 
        \min_{z}~ & F(z) :=  \frac{\|v-y \|^{2}}{N T } +  \lambda_{1}\|A\|_{\rm F}^{2} + \lambda_{2}\|W\|_{\rm F}^{2} +\lambda_{3}\|V\|_{\rm F}^{2} + \lambda_{4} \|b\|^{2} + \lambda_{5} \|c\|^{2}  
         \\
        s.t. ~ & \bar{c}_{t}^{i}(z):= u_{t}^{i}  - W h_{t-1}^{i}  - V x_{t}^{i}  - b = \mathbf{0}, \\
        &   \hat{c}_{t}^{i}(z):= h_{t}^{i}  -  [ u_{t}^{i} ]_{+}  = \mathbf{0}   \\
        &   \tilde{c}_{t}^{i}(z):= v_{t}^{i} - A h_{t }^{i}  - c = \mathbf{0} , \\ 
        & \forall i \in [N], \, t \in [T],
    \end{aligned}
\end{equation*}
where the variables 
\begin{align*}
    &z=(\theta^{\top}, u^{\top},h^{\top}, v^{\top} )^{\top} \in \R^{\bar{N}} \text{ with }  \\
	&u=((u^{1})^{\top}, \dots, (u^{N})^{\top})^{\top},~ u^{i}=((u_{1}^{i})^{\top}, \dots, (u_{T}^{i})^{\top})^{\top} \text{ for all } i\in [N],  \\
	&h=((h^{1})^{\top}, \dots, (h^{N})^{\top})^{\top},~ h^{i}=((h_{1}^{i})^{\top}, \dots, (h_{T}^{i})^{\top})^{\top} \text{ for all } i\in [N],  \\
	&v=((v^{1})^{\top}, \dots, (v^{N})^{\top})^{\top},~ v^{i}=((v_{1}^{i})^{\top}, \dots, (v_{T}^{i})^{\top})^{\top} \text{ for all } i\in [N],   \\
	&\bar{N}:= N_{h}(N_{x} + N_{h}+ N_{y} ) + N_{h} +N_{y} + N T(2N_{h}+N_{y}),  
\end{align*}
the label $ y=((y^{1})^{\top}, \dots, (y^{N})^{\top})^{\top}$ with $ y^{i}=((y_{1}^{i})^{\top}, \dots, (y_{T}^{i})^{\top})^{\top} $ for all $ i\in [N] $, and $ h_{0}^{i}=\mathbf{0} $ for all $ i \in [N] $. 
Correspondingly, the $ \ell_{1} $-penalized reformulation of \eqref{multicom-RNN-test} is defined as 
\begin{equation}
    \label{RNN-test}
    \begin{aligned} 
        \min_{z} ~\Theta(z) :=  F(z) + \beta_{1} \left( \| \bar{c}(z) \|_{1} + \| \hat{c}(z) \|_{1}  \right) + \beta_{2} \| \tilde{c}(z) \|_{1}  ,
    \end{aligned}
\end{equation} 
where $  \beta_{1}, \beta_{2} \in \R_{++} $, and $ \bar{c}:= ( (\bar{c}_{1}^{1})^{\top} , \dots, (\bar{c}_{T}^{N})^{\top} )^{\top} $, $ \hat{c}:= ( (\hat{c}_{1}^{1})^{\top} , \dots, (\hat{c}_{T}^{N})^{\top} )^{\top} $, $ \tilde{c}:= ( (\tilde{c}_{1}^{1})^{\top} , \dots, (\tilde{c}_{T}^{N})^{\top} )^{\top} $.  

We test and compare the performance of Algorithm \ref{alg2} (SRPA) on \eqref{RNN-test}, and the state-of-the-art (stochastic) gradient descent-based algorithms on \eqref{multicom-RNN-test}. 
The performance is evaluated by training error and test error at each iteration point $ \theta $ or the component $ \theta $ from iteration point $ z= (\theta^{\top}, u^{\top},h^{\top},v^{\top} )^{\top} $ defined as follows. 
For the case where $ N=1 $, i.e., a dataset of a single sequence, we remove the superscripts in $ x_{t}^{i} $ and $ y_{t}^{i}  $ for simplicity and define 
{\small \begin{align*}
	&\textbf{TrainErr}(\theta):= \frac{1}{T} \sum_{t=1}^{T} \| y_{t} - A \left( W (\cdots [Vx_{1} + b ]_{+} \cdots ) + V x_{t} +b \right) -c \|^{2} ,   \\
	&\textbf{TestErr}(\theta):= \frac{1}{T_{test}} \sum_{t=T+1}^{T+T_{test}} \| y_{t} - A \left( W (\cdots [V x_{1} + b ]_{+} \cdots ) + V x_{t} +b \right) -c \|^{2}.  
\end{align*}}  
For the case where $ N>1 $, i.e., a dataset with multiple sequences, we define 
{\small \begin{align*}
	&\textbf{TrainErr}(\theta):= \frac{1}{N T} \sum_{i=1}^{N}\sum_{t=1}^{T} \| y_{t}^{i} - A \left( W (\cdots [Vx_{1}^{i} + b ]_{+} \cdots ) + V x_{t}^{i} +b \right) -c \|^{2} ,   \\
	&\textbf{TestErr}(\theta):= \frac{1}{N_{test} T} \sum_{i=N+1}^{N+N_{test}}   \sum_{t=1}^{T} \| y_{t}^{i} - A \left( W (\cdots [Vx_{1}^{i} + b ]_{+} \cdots ) + V x_{t}^{i} +b \right) -c \|^{2}  .  
\end{align*}}

\subsection{Datasets and coefficients}
\label{sec:4.4.1}
Experiments are conducted on a synthetic dataset ($N=1$), the Volatility of S\&P Index dataset \cite{WZC2024} ($N=1$), and the TIMIT dataset \cite{garofolo1993darpa} ($ N>1 $).

The synthetic training dataset $ \{ (x_{t}   , y_{t}   ), t \in [T] \}   $ and test dataset $ \{ (x_{t}   , y_{t}   ), \\ t \in [T+T_{test}]\backslash [T] \}   $ are randomly generated according to Section 5.1 of \cite{WZC2024}. Specifically, we set $ T=8 , T_{test}=2, N_{x}=5,N_{h}=4,N_{y}=3 $ and $ \bar{N}=143 $. Then we randomly generate matrices $ \hat{A},\hat{W},\hat{V},\hat{b},\hat{c} $ from a normal distribution with mean 0 and variance 0.8, inputs $ \{ x_{t}, t \in [T+T_{test}] \}  $ from a uniform distribution on the interval $[-1,1]$, and noises $   \{ e_{t}, t \in [T+T_{test}] \} $ from a normal distribution with mean 0 and variance $10^{-3}$. The labels $ \{ y_{t}, t \in [T+T_{test}] \}  $ are subsequently generated by 
\begin{align*}
    \hat{y}_{t} = \hat{A} \sigma\left( \hat{W} (\dots \sigma( \hat{V}  x_{1}  +\hat{b} ) \dots) + \hat{V} x_{t} +\hat{b} \right) +\hat{c} + e_{t}
\end{align*}
and the standardization on $ \{ [\hat{y}_{t}]_{i}, t \in [T+T_{test}] \}  $ for all $ i \in [N_{y}] $. 
Experiments for the synthetic dataset are conducted in Python 3.10 on a laptop of 16GB RAM and AMD Ryzen 7 5800U with Radeon Graphics (1.90 GHz).

The dataset, Volatility of S\&P Index, is constructed according to Section 5.1 of \cite{WZC2024}. 
Specifically, $ y_{t} $ and $ x_{t} $ are the monthly realized volatility of the S\&P index and exogenous factors, respectively, where $ T=393, T_{test}=44,N_{x}=11,N_{h}=20,N_{y}=1 $ and $ \bar{N}=16774  $. 
Experiments for the Volatility of S\&P Index dataset are conducted in Python 3.10 on a laptop of 16GB RAM and AMD Ryzen 7 5800U with Radeon Graphics (1.90 GHz). 

The TIMIT dataset is widely used in  audio denoising tasks. We select and construct $ \{ (x_{t}^{i} \in N_{x} , y_{t}^{i} \in N_{y} ), \, i\in [N+N_{test}], t \in [T] \}   $ according to \cite[Section 3.2.2]{wang2025thesis}. Specifically, $ x_{t}^{i} $ and $y_{t}^{i} $ are the spectrograms of the audio with noise and the clean audio, respectively, where $ N=49, N_{test}=21, T=126, N_{x}=N_{y}=129, N_{h}=2  $ and $ \bar{N}=821793 $.
All the implementations are conducted in Python 3.10 on a laptop of 16GB RAM and AMD Ryzen 7 5800U with Radeon Graphics (1.90 GHz), a high-performance server (2 AMD EPYC 7763 CPUs and 1024GB RAM) at the Department of Applied Mathematics and a GPU enabled virtual machine (16 vCPU and 96GiB vRAM) at UBDA of the Hong Kong Polytechnic University.

Regarding regularization coefficients, we set
\begin{align*}
    \lambda_{1} = \frac{\tau}{N_{y} N_{h}}  , \,
    \lambda_{2} = \frac{\tau}{N_{h}^{2}}, \,
    \lambda_{3} = \frac{\tau}{ N_{h} N_{x} }, \,
    \lambda_{4} = \frac{\tau}{ N_{h} }, \,
    \lambda_{5} = \frac{\tau}{N_{y}}, 
\end{align*}
with 
\begin{itemize}
	\item $ \tau=1.2 $ for the synthetic dataset according to \cite{WZC2024}, 
	\item $ \tau=1 $ for the Volatility of S\&P Index dataset according to \cite{WZC2024},
	\item and $ \tau=5 \times 10^{-4} $ for the TIMIT dataset according to \cite{wang2025thesis}, 
\end{itemize}
respectively.

For penalty parameters $ \beta_{1} $ and $ \beta_{2} $, we set $ \beta_{1} = \beta_{2} =1, 0.1 $, and $0.04$ for the synthetic, Volatility of S\&P Index, and TIMIT datasets, respectively. 
    As shown in Section \ref{sec:4.4.3}, feasibility violation is controlled within acceptable levels under these penalty parameters.  

\subsection{Subalgorithm}
\label{sec:4.4.2}
This subsection elaborates on the algorithm for the subproblems of SRPA. 
When SRPA is applied on \eqref{RNN-test}, we have $ H(z)=\beta_{1} \| h-[u]_{+} \|_{1} $.
According to Remark \ref{remark:subp}, steps 3 and 4 of SRPA at $k$th iteration is equivalent to calculating the strongly convex optimization problem
\begin{align*} 
    \min_{d \in \R^{\bar{N}}}\, \Phi_{k}(d), ~ \text{s.t. } \nu_{j} \, [u^{k} + d_{u} ]_{j} \geq 0, \, j \in [N T N_{h}],
\end{align*}
for all indicators $ \boldsymbol{\nu} \in \R^{N T N_{h}}  $ with the components
\begin{align*} 
    \nu_{j}  
    = \left\{ \begin{array}{ll}
        1, & \text{if } [u^{k}]_{j} >\delta,\\
        -1, & \text{if } [u^{k}]_{j} <-\delta,\\
        1 \text{ or } -1 , & \text{if } | [u^{k}]_{j} | \leq \delta,\\
    \end{array}\right.
    ~ \forall  j \in [T N_{h}], 
\end{align*}
and finding the one solution at which $ \Phi_{k} $ has the minimum value. 
Here, 
\begin{align*}
	\Phi_{k}(d)
    = \, & F(z^{k}) + \nabla F(z^{k})^{\top} d + \frac{\rho_{k}}{2} \|d\|^{2}
      + \beta_{1} \left\| h^{k} + d_{h} - [u^{k} + d_{u} ]_{+} \right\|_{1}   \\
    & + \beta_{1}  \left\| \bar{c}(z^{k}) + \nabla \bar{c}(z^{k}) ^{\top} d \right\|_{1} 
      + \beta_{2}  \left\|  \tilde{c}(z^{k}) + \nabla \tilde{c}(z^{k}) ^{\top} d     \right\|_{1} 
\end{align*}  
with $ \bar{c} $ and $ \tilde{c} $ defined as in \eqref{RNN-test}. The variable $d \in \R^{\bar{N}} $ is divided into blocks as follows.
\begin{align*}
    &d=(d_{\theta}^{\top},  d_{u}^{\top}, d_{h}^{\top} , d_{v}^{\top} )^{\top}    \text{ with } 
      d_{\theta}=\left(d_{A}^{\top} ,d_{W}^{\top} ,d_{V}^{\top} ,d_{b}^{\top} , d_{c}^{\top}  \right) ^{\top}  , \\
        &d_{u}=(d_{u^{1}}^{\top}, \dots, d_{u^{N}}^{\top})^{\top}, \, d_{u^{i}} = (d_{u_{1}^{i}}^{\top}, \dots, d_{u_{N}^{i}}^{\top})^{\top} \text{ for all } i\in [N],   \\
        &d_{h}=(d_{h^{1}}^{\top}, \dots, d_{h^{N}}^{\top})^{\top}, \, d_{h^{i}} = (d_{h_{1}^{i}}^{\top}, \dots, d_{h_{N}^{i}}^{\top})^{\top} \text{ for all } i\in [N],   \\
        &d_{v}=(d_{v^{1}}^{\top}, \dots, d_{v^{N}}^{\top})^{\top}, \, d_{v^{i}} = (d_{v_{1}^{i}}^{\top}, \dots, d_{v_{N}^{i}}^{\top})^{\top} \text{ for all } i\in [N].   
\end{align*}
All dimensions are consistent with the corresponding components in $z$. 
In practice, we set $ \delta=10^{-15} $ and apply cvxpy solver to calculate a strongly convex optimization problem 
\begin{align}
    \label{subp_i}
    \min_{d \in \R^{\bar{N}}}\, \Phi_{k}(d), ~ \text{s.t. } \bar{\nu}_{k,j}  \, [u^{k} + d_{u} ]_{j} \geq 0, \, j \in [N T N_{h}],
\end{align}
at $k$th iteration with $\bar{\boldsymbol{\nu}}_{k} \in \R^{N T N_{h}}$ defined as
\begin{align}
	\label{def:nu}
    \bar{\nu}_{k,j} =  
    \left\{ \begin{array}{ll}
        1, & \text{if } [u^{k}]_{j} >\delta,\\
        -1, & \text{if } [u^{k}]_{j} <-\delta,\\
        X_{k,j} , & \text{if } | [u^{k}]_{j} | \leq \delta,\\
    \end{array}\right.
    ~ \forall  j \in [T N_{h}], 
\end{align}
where $ P(X_{k,j} = 1) = (\delta + [u^{k}]_{j})/(2 \delta), P(X_{k,j} = -1) = (\delta - [u^{k}]_{j})/(2 \delta) $.

\subsection{Feasibility and Optimality Measures}
\label{sec:4.4.3}

Since SRPA is applied on \eqref{RNN-test}, which is the $\ell_{1}$-penalized reformulation of \eqref{multicom-RNN-test}, this subsection investigates the feasibility violation 
\begin{equation}
    \label{eq:FeasVi}
    \begin{aligned}
      & \textbf{FeasVi}:=\max\{\textbf{FeasVi$_h$}, \textbf{FeasVi$_u$}, \textbf{FeasVi$_v$}  \}, \text{ with } \\
        &\textbf{FeasVi$_h$} = \frac{\left\| \hat{c}(z) \right\|}{NT} ,  
        \textbf{FeasVi$_u$} = \frac{\left\| \bar{c}(z) \right\|}{NT} , 
        \textbf{FeasVi$_v$} =\frac{\left\| \tilde{c}(z) \right\|}{NT}  
    \end{aligned}
\end{equation} 
at $ z=z^{k} $ for the sequence $ \{ z^{k} \} $ generated by SRPA, where $ \bar{c}, \hat{c} $ and $ \tilde{c} $ are defined as in \eqref{RNN-test}.  
To assess optimality, we investigate the objective function value $ \Theta(z^{k}) $ and the optimality residual 
\begin{align}
  \label{eq:res-k}
  \max\left\{ \left[-\min_{\|d\|=1} \Theta^{\prime}(y^{k+1} ; d)\right]_{+} , \left\| y^{k+1} - z^{k} \right\| \right\}
\end{align}
with $ y^{k+1} $ defined in \eqref{def:yk}.
However, $ y^{k+1} $ is hard to obtain. Even if $ y^{k+1} $ is available, the exact value of the optimality residual is difficult to compute since $ \Theta^{\prime}(y^{k+1} ; \cdot) $ is possibly nonconvex and the spherical surface $ \{d \mid \|d\|=1\} $ is nonconvex.
Fortunately, it follows from \eqref{eq:2.13-r} and \eqref{eq:op-res-2} that \eqref{eq:res-k} is in the order of $  \|s^{k} \|   $.
Thus, we use $  \|s^{k} \|   $ as a substitute measure of the optimality residual.

Before demonstrating the changes of feasibility violation and optimality measures with respect to the number of iterations, we tune the parameters $ \rho_{1},\eta_{1},\eta_{2} $ and search for a suitable initialization $ z^{1} $ for SRPA. 
Note that k-fold and leave-P-out cross validations are only applicable to independent and identically distributed samples. We fix the validation dataset as the test dataset rather than using cross validation when tuning parameters. 
We set the maximum number of iterations as $50$, $100$ and $100$ when tuning parameters over the synthetic, Volatility of S\&P Index, and TIMIT datasets, respectively. 

For the synthetic dataset, the values of $ \rho_{1},\eta_{1},\eta_{2} $ are searched from 
\begin{align*}
    \{ 0.5, 1, 2 \}
    , \,
    \{ 0.9, 0.99, 0.999 \}
    , \,
    \{ 1.1, 1.2, 1.3 \},
\end{align*}
respectively. 
The best initialization strategy for the initial values of ${A}^1$, ${W}^1$, ${V}^1$ is searched from 
\begin{itemize}
    \item random normal initialization \cite{bengio2009learning} with zero mean and standard deviations of $ \{ 10^{-3}, 10^{-2}, 10^{-1} \} $, 
    \item He initialization \cite{he2015delving},
    \item Glorot initialization \cite{glorot2010understanding}, 
    \item and LeCun initialization \cite{klambauer2017self}.
\end{itemize}
The initial values of $b^1$ and $c^1$ are set to $ \mathbf{0} $ according to \cite[p. 305]{goodfellow2016deep}. The initial values of $ \{ (u_{t}^{1}, h_{t}^{1}, v_{t}^{1} ), \, t \in [T] \} $ are set as
\begin{align}
    \label{eq:ini_u1}
    u_{t}^{1}:=    W^{1} h_{t-1}^{1}  + V^{1} x_{t}  + b^{1}, \, 
    h_{t}^{1}:= [ u_{t}^{1} ]_{+} , \, 
    v_{t}^{1}:=   A^{1} h_{t }^{1}  + c^{1} , \, 
        \forall t\in [T]
\end{align}
with $ h_{0}^{1} = \mathbf{0} $, which ensures the feasibility of the initial point $ z^{1} $.

For the Volatility of S\&P Index dataset, the values of $ \rho_{1},\eta_{1},\eta_{2} $ are searched from 
\begin{align*}
    \{ 0.01, 0.03, 0.05 \}
    , \,
    \{ 0.7, 0.8, 0.9 \}
    , \,
    \{ 1.1, 1.2, 1.3 \},
\end{align*}
respectively.
The initialization strategy for the initial values of ${A}^1$, ${W}^1$, ${V}^1$ is searched from random normal initialization with zero mean and standard deviations of $ \{ 10^{-3}, 10^{-2}, 10^{-1} \} $, Glorot initialization and LeCun initialization.
The initialization strategy for $ b^{1},c^{1}, \{ ( u_{t}^{1}, h_{t}^{1}, v_{t}^{1} ), \, t \in [T]  \} $ is the same with the one used in the synthetic dataset.
He initialization is no longer considered because the initial value $ \Theta (z^1)=2.975\times 10^{22} \gg \Theta(z^{0}) \approx 0.9364 $ under He initialization, which does not comply with our setting in Corollary \ref{cor:D-SD}.

For the TIMIT dataset, the values of $ \rho_{1}, \eta_{1}, \eta_{2} $ are searched from  
\begin{align*}
    \{ 0.001, 0.003, 0.005 \}
    , \,
    \{ 0.7, 0.8, 0.9 \}
    , \,
    \{ 1.1, 1.3, 1.5 \},
\end{align*}
respectively.
Based on \cite{wang2025thesis}, the initialization strategy for the initial values of ${A}^1$, ${W}^1$, ${V}^1$ is searched from random normal initialization with zero mean and standard deviations of $ \{ 10^{-3}, 10^{-2}, 10^{-1} \} $, and the initialization strategy for $ b^{1},c^{1}, \{ ( (u_{t}^{i})^{1}, (h_{t}^{i})^{1}, (v_{t}^{i})^{1} ), \, t \in [T] ,  i \in [N] \} $ is the same with the one used in the synthetic dataset.

Since these initialization strategies are random, we evaluate the performance using the averaged test error over 10, 3 and 3 repetitions for the synthetic, Volatility of S\&P Index and TIMIT datasets, respectively. 
The means and the standard deviations of test errors are listed in Tables \ref{tab:SRPA-tune-1}-\ref{tab:SRPA-tune-3} of Appendix \ref{sec:A1}, where each best test error is highlighted in bold. 
Based on these results, we set the parameters and initialization strategies as listed in Table \ref{tab:set_SRPA}. 
\begin{table}[htbp]
\centering
\caption{Parameters, initialization strategies and the number of iterations of SRPA.}
\label{tab:set_SRPA}

\resizebox{0.7\textwidth}{!}{%
\begin{tabular}{l c c c c}
    \toprule
    Datasets 
    & $\rho_{1}$ 
    & $(\eta_{1}, \eta_{2})$ 
    & Init. strategy 
    & No. of iters. \\ 
    \midrule
    Synthetic 
    & 0.5 
    & (0.9, 1.3) 
    & $\mathcal{N}(0, 10^{-1})$ 
    & 100 \\ 
    Volatility of S\&P Index 
    & 0.03 
    & (0.7, 1.1) 
    & Glorot 
    & 1000 \\ 
    TIMIT 
    & 0.003 
    & (0.7, 1.1) 
    & $\mathcal{N}(0, 10^{-2})$ 
    & 600 \\ 
    \bottomrule
\end{tabular}
} 

\end{table}

Figures \ref{fig:feas_res_syn}, \ref{fig:feas_res_SP500}, and \ref{fig:feas_res_TIMIT} correspond to the synthetic, Volatility of S\&P Index, and TIMIT datasets, respectively. Each figure shows the changes of feasibility violation \eqref{eq:FeasVi}, the optimality residual $ \|s^{k} \| $, and the objective function value $ \Theta(z^{k}) $ with respect to the number of iterations. 
Since the initial point $ z^{1} $ is feasible, \textbf{FeasVi} starts at a value very close to zero due to numerical precision errors introduced during the feedforward process \eqref{eq:ini_u1}.
In Figures \ref{fig:feas_res_syn}, \ref{fig:feas_res_SP500}, and \ref{fig:feas_res_TIMIT}, \textbf{FeasVi} rises to $ 10^{-3} $, $ 10^{-5} $, and $ 10^{-6}$ at early iterations, then steadily decrease to $ 10^{-6} $, $ 10^{-9} $, and $ 10^{-9}$, respectively.   
Across all three datasets, the optimality residual converges to zero with mild fluctuations, while the objective function value decreases monotonically and converges to a stable value.

\begin{figure}[htbp]
    \centering
    
    \begin{minipage}[b]{0.6\textwidth}
        \centering
        \includegraphics[width=\textwidth]{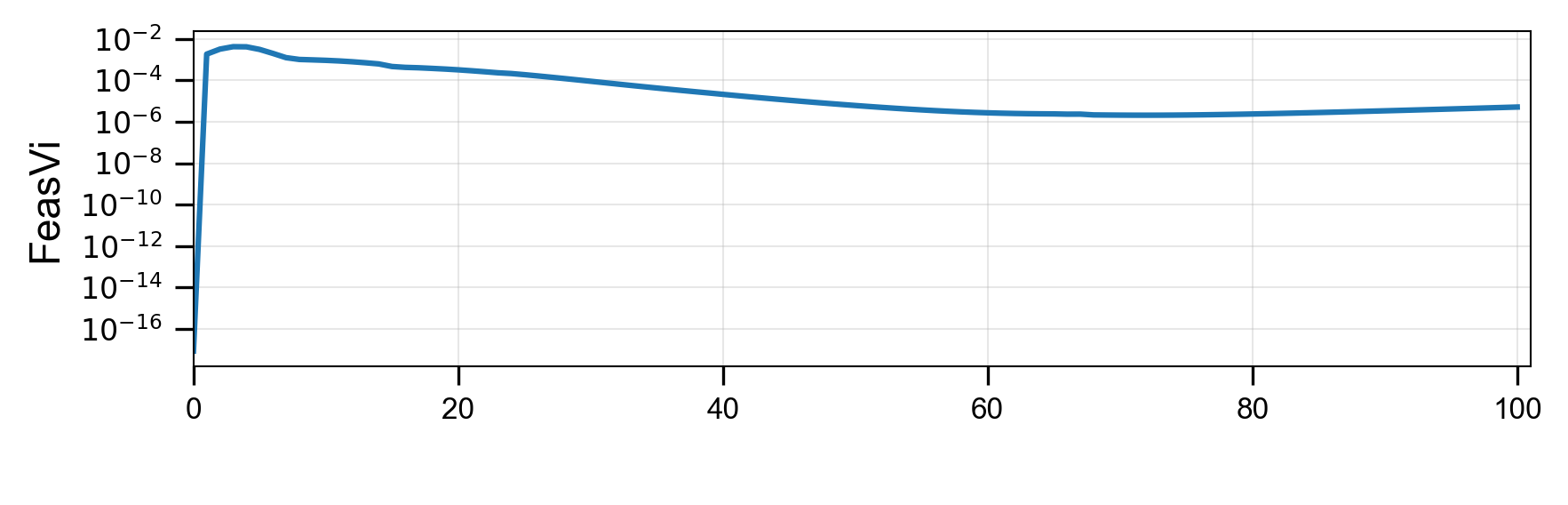} 
    \end{minipage} 
    
    \vspace{-0.5em} 

    \begin{minipage}[b]{0.6\textwidth}
        \centering
        \includegraphics[width=\textwidth]{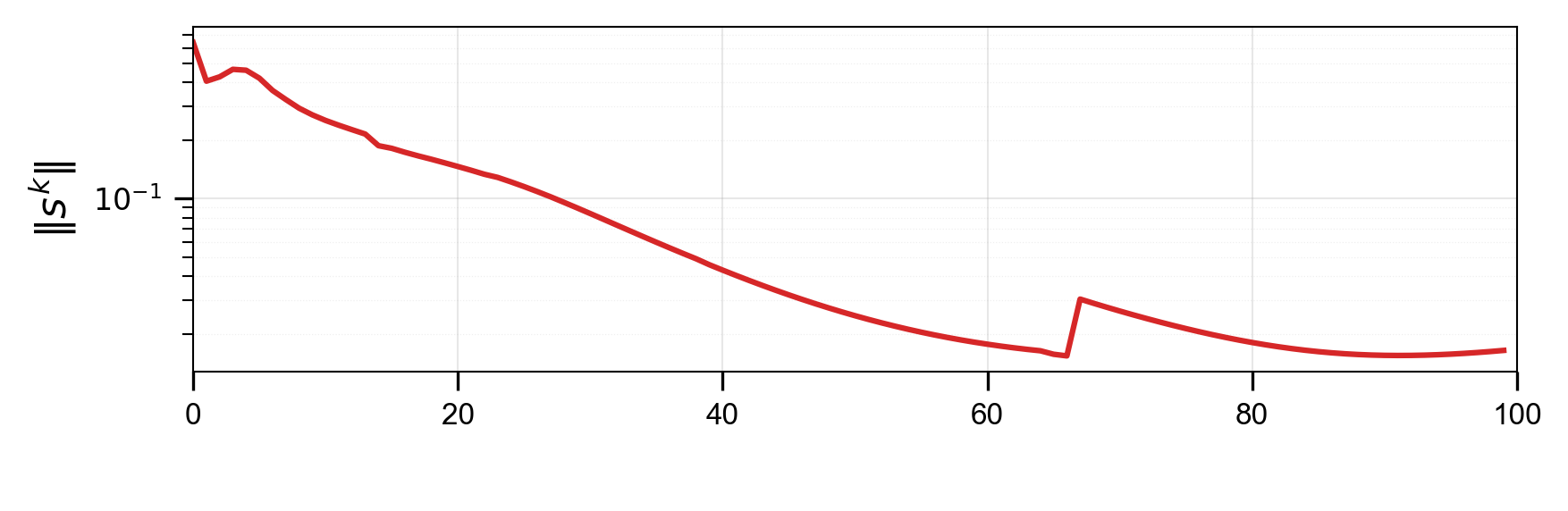} 
    \end{minipage} 

    \vspace{-0.5em} 

    \begin{minipage}[b]{0.6\textwidth}
        \centering
        \includegraphics[width=\textwidth]{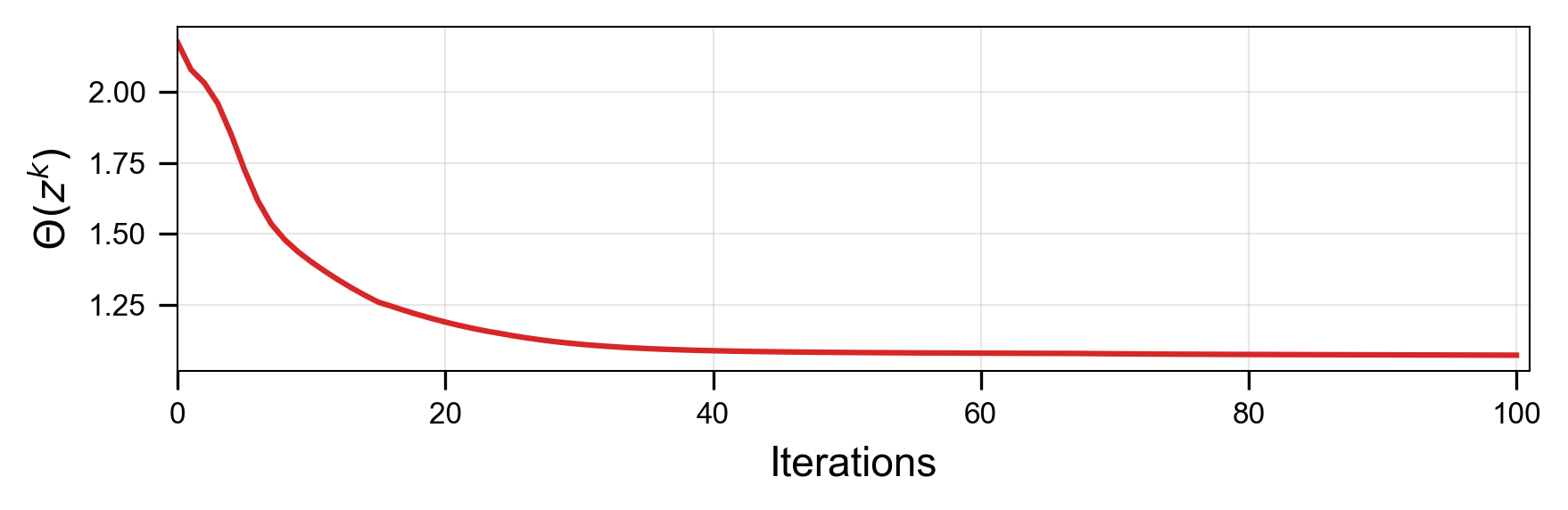} 
    \end{minipage} 

    \caption{Feasibility violation, optimality residual, and function value of SRPA in the synthetic dataset. }
    \label{fig:feas_res_syn}
\end{figure}
\begin{figure}[htbp]
    \centering
    
    \begin{minipage}[b]{0.6\textwidth}
        \centering
        \includegraphics[width=\textwidth]{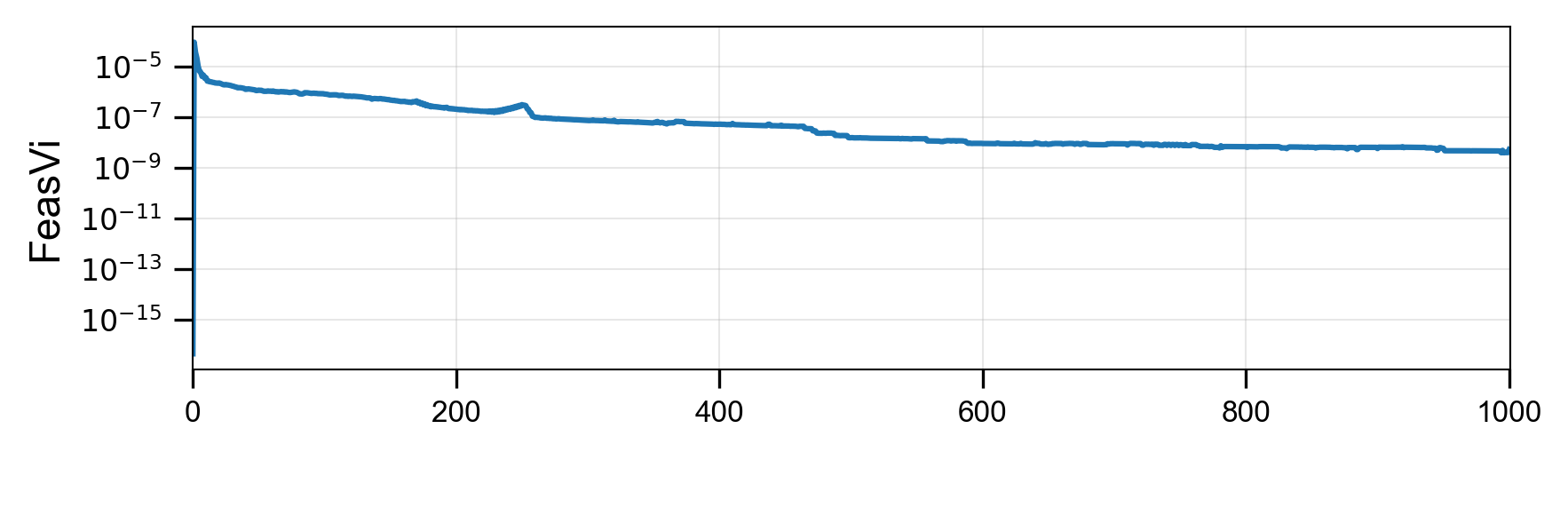} 
    \end{minipage} 
    
    \vspace{-0.5em} 

    \begin{minipage}[b]{0.6\textwidth}
        \centering
        \includegraphics[width=\textwidth]{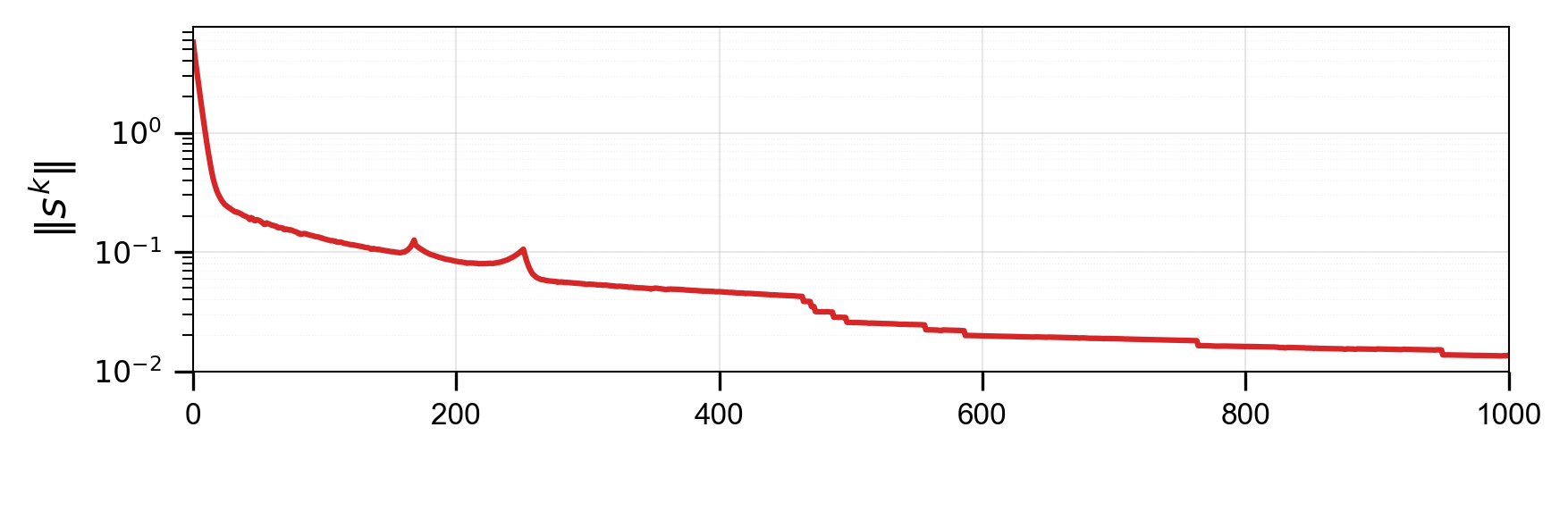} 
    \end{minipage} 

    \vspace{-0.5em} 

    \begin{minipage}[b]{0.6\textwidth}
        \centering
        \includegraphics[width=\textwidth]{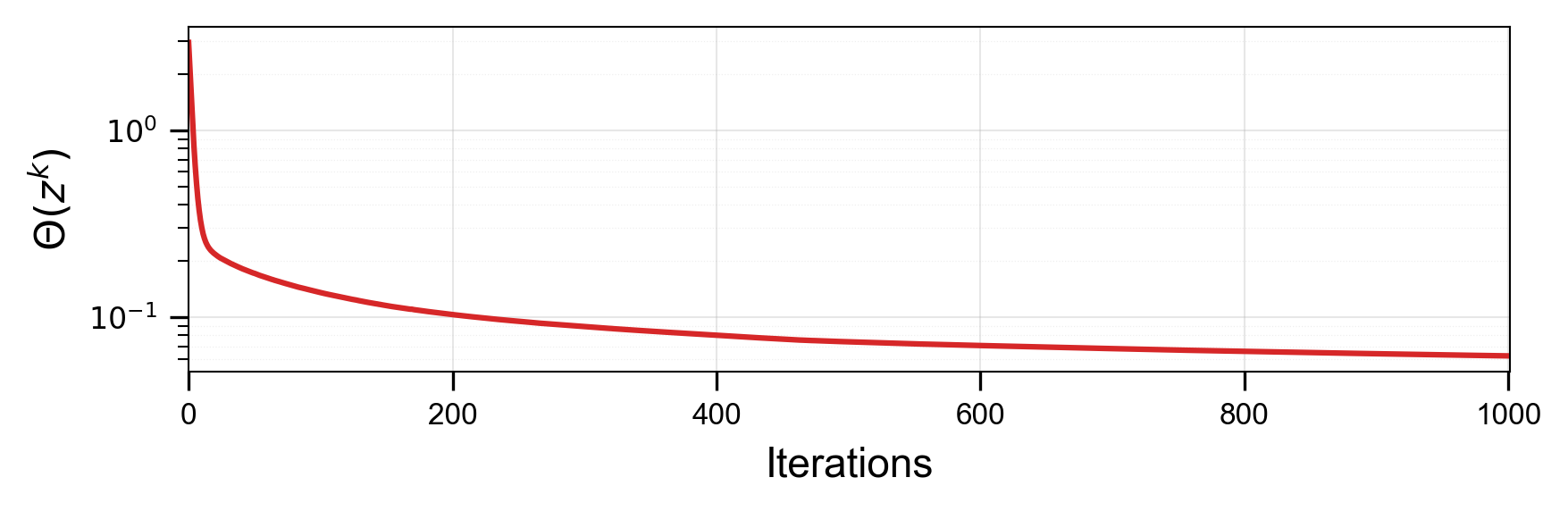} 
    \end{minipage} 

    \caption{Feasibility violation, optimality residual, and function value of SRPA in Volatility of S\&P Index dataset. }
    \label{fig:feas_res_SP500}
\end{figure}
\begin{figure}[htbp]
    \centering
    
    \begin{minipage}[b]{0.6\textwidth}
        \centering
        \includegraphics[width=\textwidth]{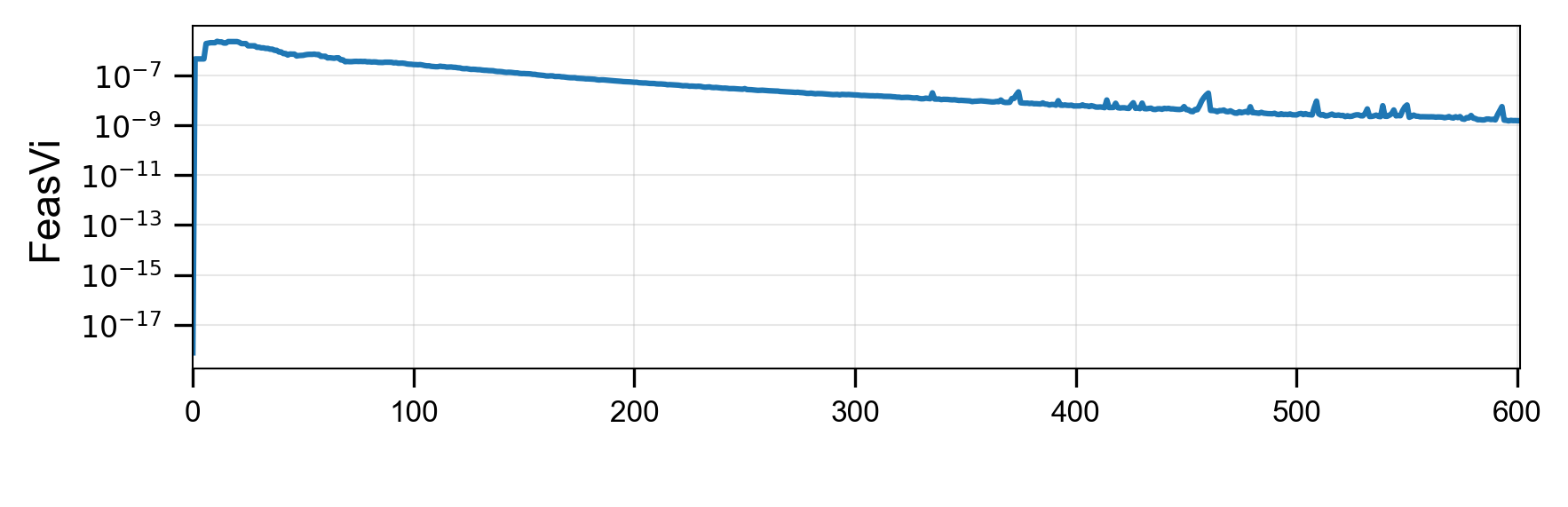} 
    \end{minipage} 
    
    \vspace{-0.5em} 

    \begin{minipage}[b]{0.6\textwidth}
        \centering
        \includegraphics[width=\textwidth]{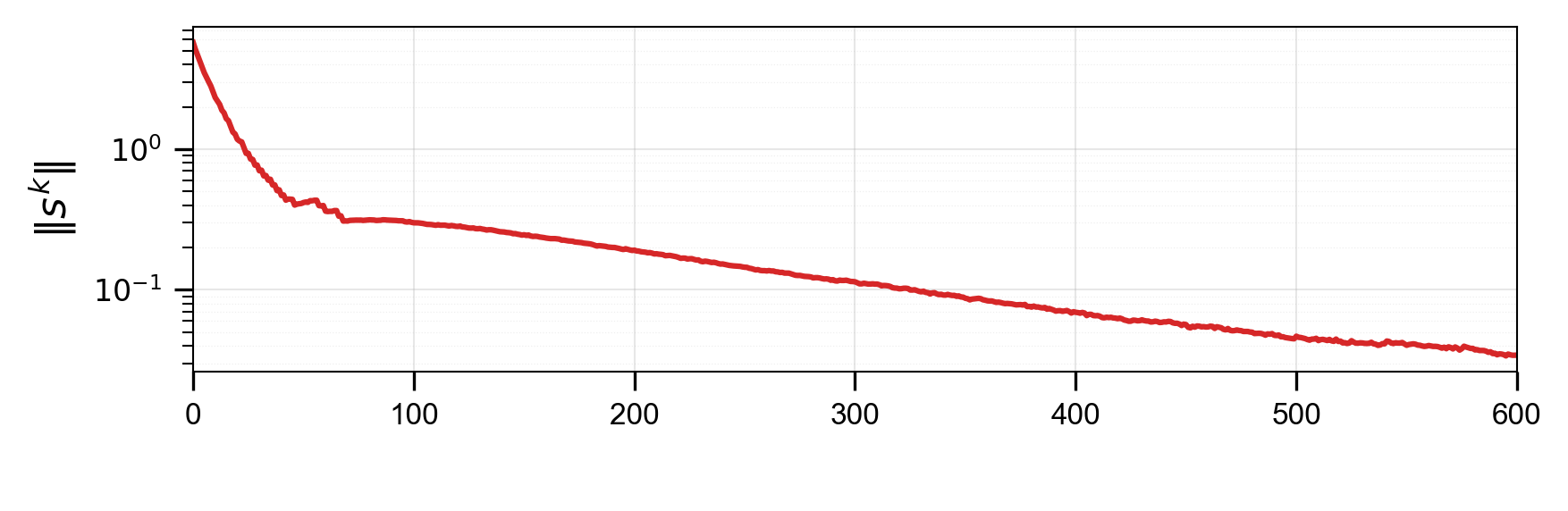} 
    \end{minipage} 

    \vspace{-1.5em} 

    \begin{minipage}[b]{0.6\textwidth}
        \centering
        \includegraphics[width=\textwidth]{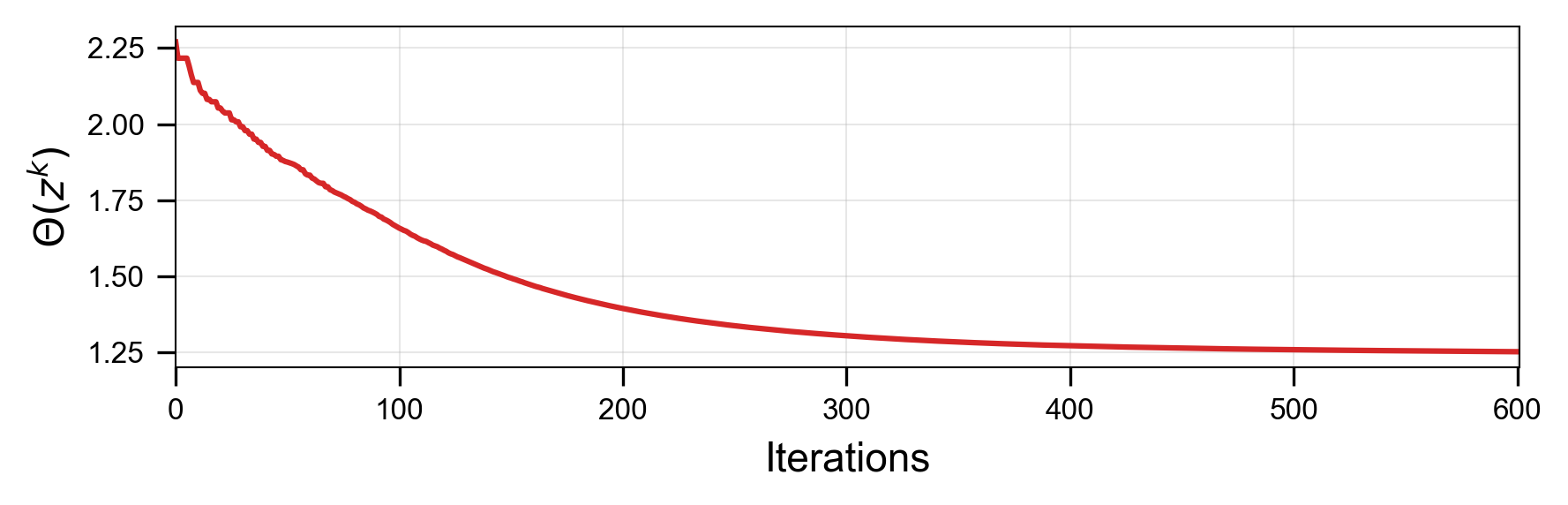} 
    \end{minipage} 

    \caption{Feasibility violation, optimality residual, and function value of SRPA in TIMIT dataset. }
    \label{fig:feas_res_TIMIT}
\end{figure}

\subsection{Training error and test error}
\label{sec:4.4.4} 
In this subsection, we compare SRPA with the state-of-the-art (S)GDs, including gradient descent (GD), gradient descent with gradient clipping (GDC), gradient descent with Nesterov momentum (GDNes), mini-batch stochastic gradient descent (SGD), and SGD with adaptive moment estimation (Adam), where the mini-batch stochastic sampling for the case $ N=1 $ refers to time-step sampling. We employ the Keras API \cite{chollet2015keras} running on TensorFlow 2 to implement the (S)GDs. 
All the implementations of SGDs are conducted in Python 3.10 on a laptop of 16GB RAM and AMD Ryzen 7 5800U with Radeon Graphics (1.90 GHz). 

Similar to the parameter tuning of SRPA in Subsection \ref{sec:4.4.3}, we search for the best parameters and initialization strategies of the aforementioned (S)GD algorithms. Specifically, the six initialization strategies in Subsection \ref{sec:4.4.3} are assessed. 
The learning rates of (S)GDs are searched from $ \{ 10^{-4}, 10^{-3}, 10^{-2}, \\10^{-1}, 1 \} $. 
Regarding the unique parameters of GDC, GDNes, SGD and Adam, 
the clipping norm of GDC is evaluated over $\{0.5, 1,  2,   4 \}$, 
the momentum coefficient of GDNes is set as $ 0.9 $, 
the batch sizes for SGD and Adam are searched from $ \{ 1,2,4 \} $ for the synthetic dataset, $ \{ 25,50,100 \} $ for the Volatility of S\&P Index dataset, and $ \{ 4,8,16 \} $ for the TIMIT dataset. 

The average test errors generated by GD, GDC, GDNes, SGD, and Adam are detailed in Appendix \ref{sec:A1}. 
Based on these results, we configure the parameters for (S)GDs as shown in Table \ref{tab:set_SGDs}, where an epoch is one complete pass of the entire training dataset through the learning algorithm, and compare their performance with SRPA under the settings in Table \ref{tab:set_SRPA}.

\begin{table}[htbp]
\centering
\caption{Parameter settings, initialization strategies, and maximum epochs of (S)GDs.}
\label{tab:set_SGDs}

\footnotesize

\renewcommand{\arraystretch}{1.2}

\begin{tabular}{l p{0.25\textwidth} p{0.25\textwidth} p{0.25\textwidth}}
\toprule
Algorithm 
& Synthetic (ep=100) 
& Volatility of S\&P Index (ep=1000) 
& TIMIT (ep=600) \\ 
\midrule
GD 
& \makecell[l]{lr=$10^{-4}$ \\ init=$\mathcal{N}(0,10^{-1})$}
& \makecell[l]{lr=$10^{-1}$ \\ init=$\mathcal{N}(0,10^{-2})$}
& \makecell[l]{lr=$1$ \\ init=$\mathcal{N}(0,10^{-2})$}
\\ 
\addlinespace
GDC 
& \makecell[l]{lr=$10^{-4}$ \\ cln=0.5 \\ init=$\mathcal{N}(0,10^{-1})$}
& \makecell[l]{lr=$10^{-1}$ \\ cln=0.5 \\ init=$\mathcal{N}(0,10^{-2})$}
& \makecell[l]{lr=$1$ \\ cln=0.5 \\ init=$\mathcal{N}(0,10^{-2})$}
\\ 
\addlinespace
GDNes 
& \makecell[l]{lr=$10^{-1}$ \\ mc=0.9 \\ init=Glorot}
& \makecell[l]{lr=$10^{-1}$ \\ mc=0.9 \\ init=$\mathcal{N}(0,10^{-1})$}
& \makecell[l]{lr=$1$ \\ mc=0.9 \\ init=$\mathcal{N}(0,10^{-2})$}
\\ 
\addlinespace
SGD 
& \makecell[l]{lr=$10^{-4}$ \\ bs=4 \\ init=$\mathcal{N}(0,10^{-1})$}
& \makecell[l]{lr=$10^{-2}$ \\ bs=25 \\ init=$\mathcal{N}(0,10^{-3})$}
& \makecell[l]{lr=$1$ \\ bs=4 \\ init=$\mathcal{N}(0,10^{-2})$}
\\ 
\addlinespace
Adam 
& \makecell[l]{lr=$10^{-1}$ \\ bs=1 \\ init=$\mathcal{N}(0,10^{-2})$}
& \makecell[l]{lr=$10^{-4}$ \\ bs=25 \\ init=$\mathcal{N}(0,10^{-3})$}
& \makecell[l]{lr=$10^{-3}$ \\ bs=8 \\ init=$\mathcal{N}(0,10^{-2})$}
\\ 
\bottomrule
\end{tabular}

\smallskip

\parbox{\textwidth}{
Note: lr, cln, mc, bs, init, and ep denote learning rate, clipping norm, momentum coefficient, batch size, initialization strategy, and maximum number of epochs, respectively.
}

\end{table}

For the synthetic dataset, we compare the performances of algorithms over 100 epochs in a single trial.
Figure \ref{fig:100epochs_epoch} plots the changes of training errors and test errors with respect to the number of epochs.
As shown in the figure, GD, GDC, and SGD stagnate after the early iterations, with their training and test errors remaining at relatively high levels.
The training error of Adam decreases in an oscillatory manner and fluctuates around $1.2$, while its test error oscillates around $6.2$.
The training error of GDNes asymptotically converges to $ 1.2 $, while its test error remains close to $6$ after a mild fluctuation. 
In contrast, SRPA demonstrates greater stability, and its training error and test error settle at approximately $ 0.5 $ and $ 5.5 $, respectively.
As indicated in Table \ref{tab:sp500-TIMIT}, SRPA achieves the best training error and test error among all the algorithms. 
In terms of computational time, the deterministic gradient descent algorithms (GD, GDC and GDNes) require approximately twice the CPU time of stochastic gradient descent algorithms (SGD and Adam), and the time spent by SRPA is between GDs and SGDs.  
 
\begin{figure}[htbp]
    \centering

    \begin{minipage}[b]{0.45\textwidth}
        \centering
        \includegraphics[width=\textwidth]{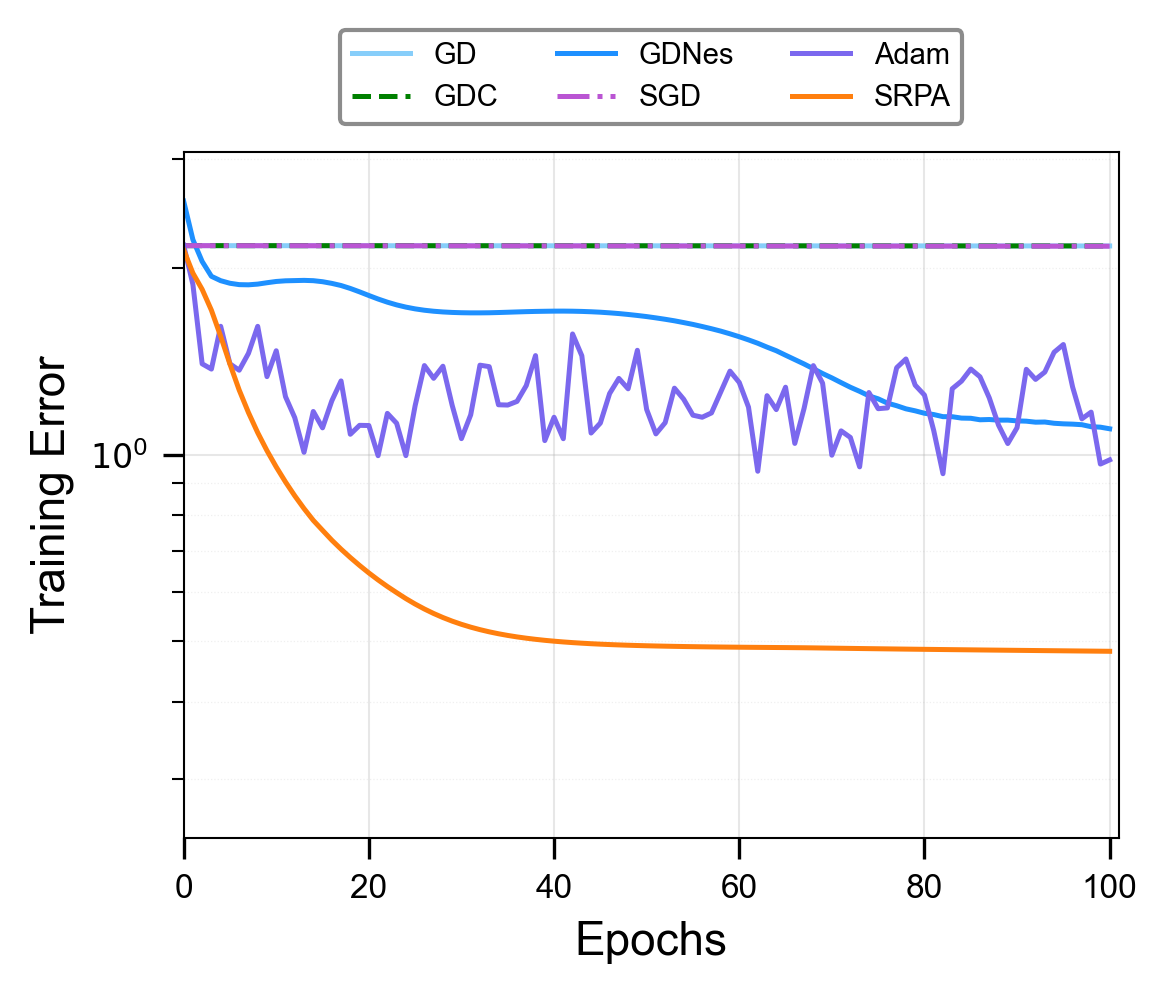}
    \end{minipage}
    \hfill
    \begin{minipage}[b]{0.45\textwidth}
        \centering
        \includegraphics[width=\textwidth]{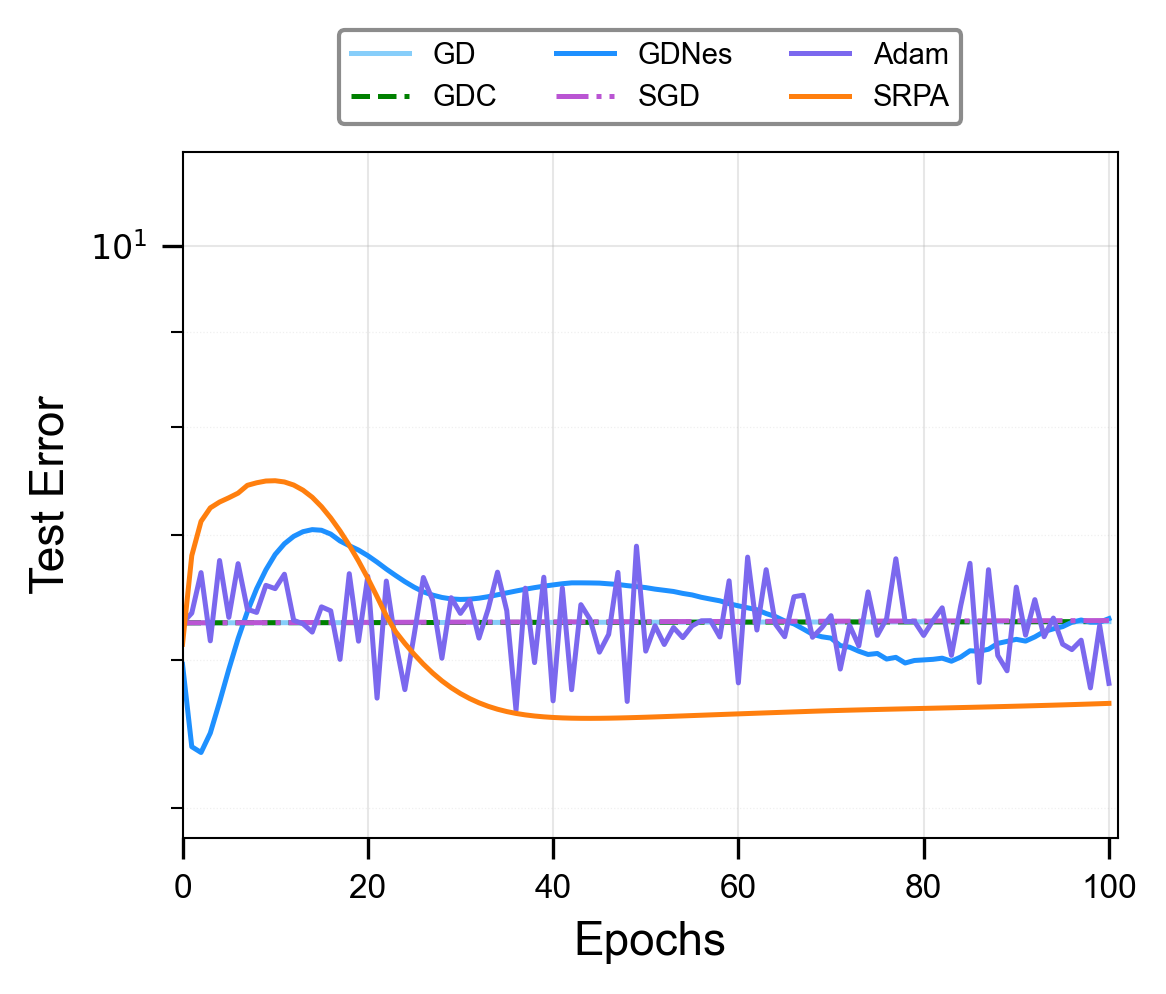} 
    \end{minipage} 

    \caption{Comparison of training and test errors on the synthetic dataset across epochs.}
    \label{fig:100epochs_epoch}
\end{figure}

For the Volatility of S\&P Index dataset, we compare the performances of algorithms over 1000 epochs in a single trial.
Figure \ref{fig:1000epochs_epoch} plots the changes of training errors and test errors with respect to the number of epochs.
As shown in the figure, SGD and Adam exhibit overfitting (training error decreases slowly and test error increases) after around 150 and 50 epochs, respectively, while GD, GDC and GDNes fluctuates violently after around 290, 280 and 40 epochs respectively. 
The minimum test error of each algorithm before significant fluctuations are listed in Table \ref{tab:sp500-TIMIT}.
As shown in Figure \ref{fig:1000epochs_epoch} and Table \ref{tab:sp500-TIMIT}, among all the algorithms, SRPA achieves the minimum test error while maintaining the training error at a reasonable level. 
Furthermore, SRPA demonstrates robustness against overfitting.
Regarding computational time, we acknowledge that our algorithm relies on an existing Python package (cvxpy) to solve the subproblem. Consequently, the computational cost increases significantly with the problem scale, i.e., $ \bar{N} $. 
Specifically, on the Volatility of S\&P Index dataset, the per-epoch time for (S)GD ranges from 0.1 to 2 seconds, while SRPA requires approximately 15 seconds. Therefore, developing efficient algorithms tailored to the specific structure of the nonsmooth convex subproblems in SRPA is a critical direction for future work. 
 
\begin{figure}[htbp]
    \centering

    \begin{minipage}[b]{0.45\textwidth}
        \centering
        \includegraphics[width=\textwidth]{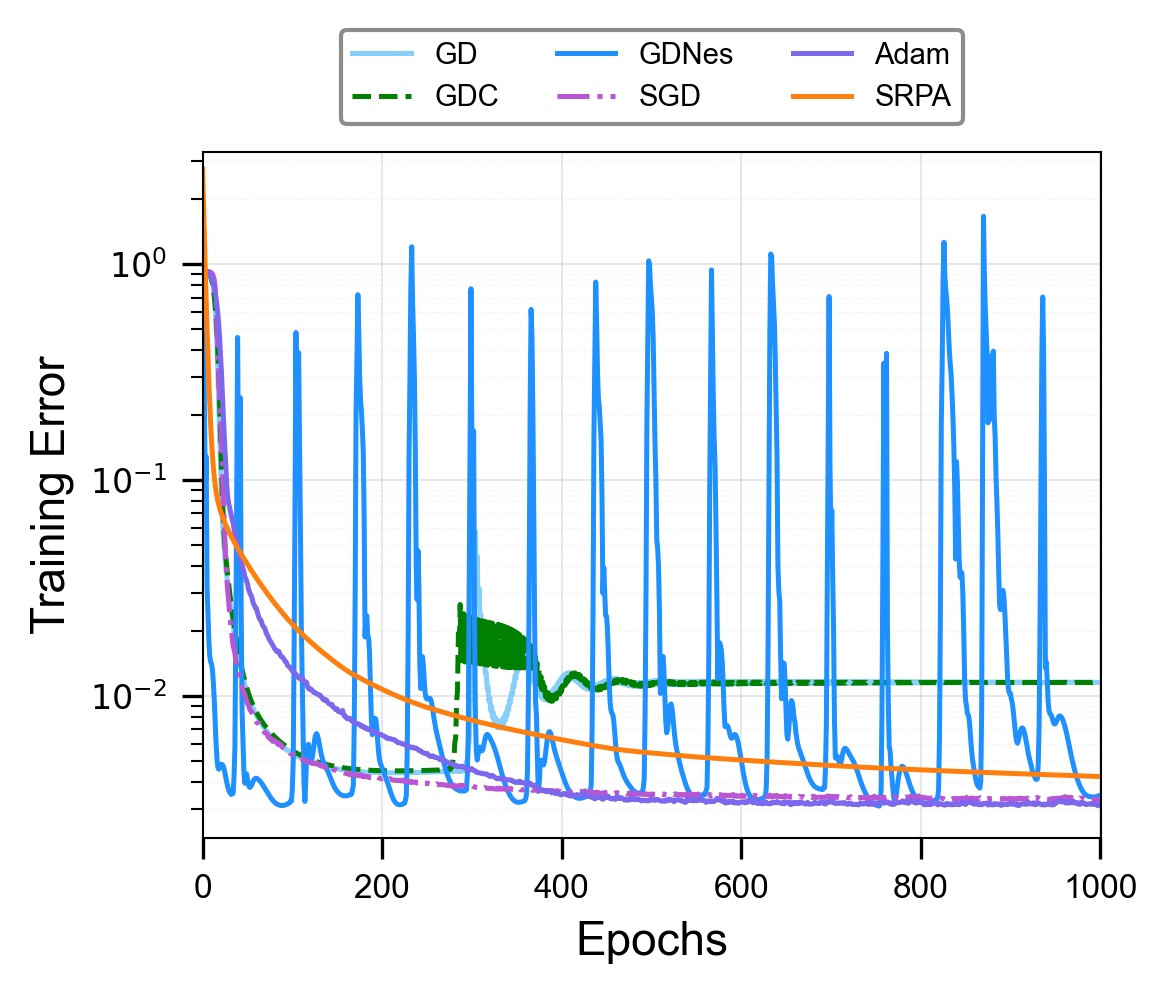}
    \end{minipage}
    \hfill
    \begin{minipage}[b]{0.45\textwidth}
        \centering
        \includegraphics[width=\textwidth]{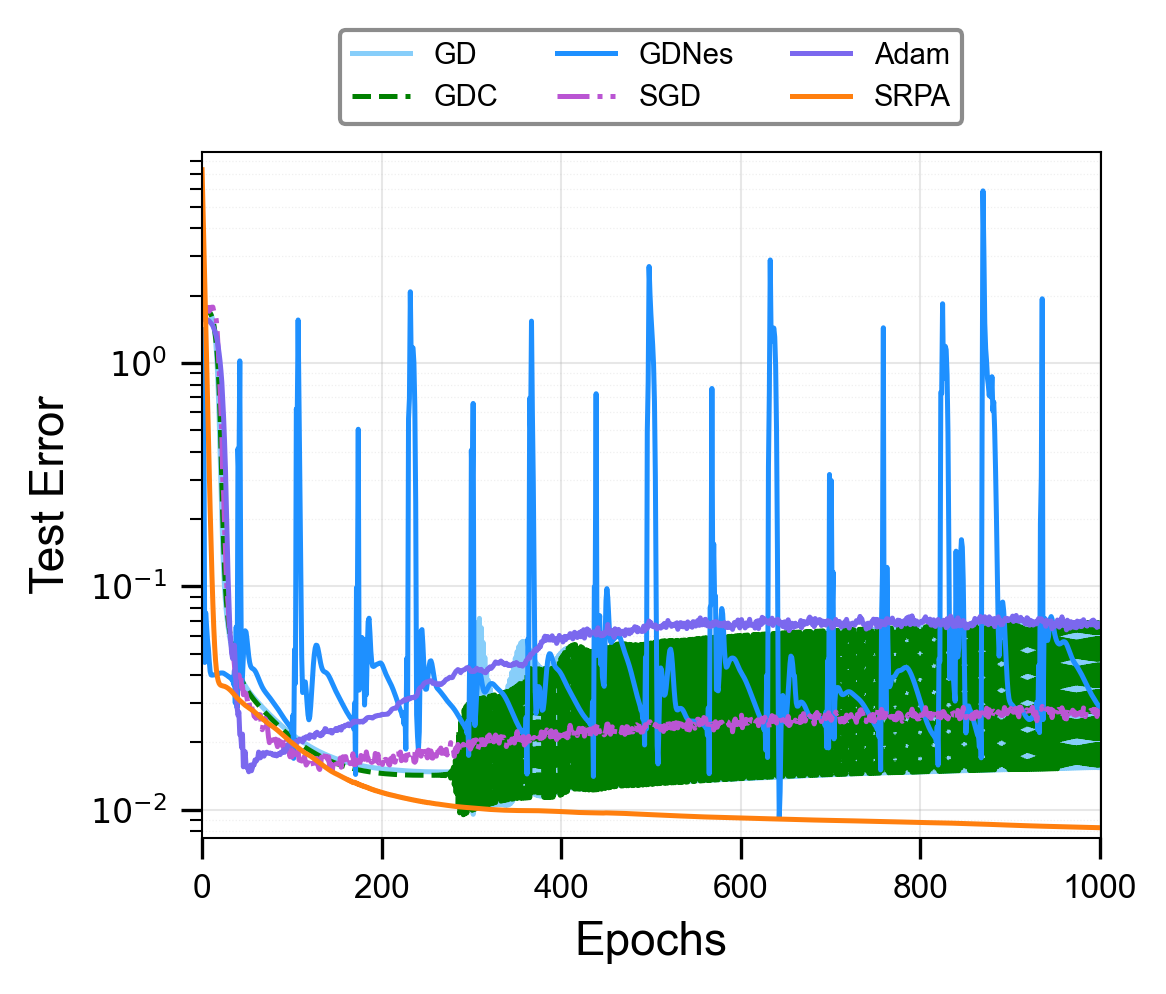} 
    \end{minipage} 

    \caption{Comparison of training and test errors on the Volatility of S\&P Index dataset across epochs.}
    \label{fig:1000epochs_epoch}
\end{figure}

For the TIMIT dataset, we compare the performances of algorithms over 600 epochs in a single trial.
Figure \ref{fig:600epochs_epoch-TIMIT} plots the changes of training errors and test errors with respect to the number of epochs.
As shown in the figure, GDNes, SGD, Adam, and SRPA exhibit overfitting after around 490, 340, 190, and 350 epochs, respectively.  
The minimum test error of each algorithm are listed in Table \ref{tab:sp500-TIMIT}.
As shown in Figure \ref{fig:600epochs_epoch-TIMIT} and Table \ref{tab:sp500-TIMIT}, among all the algorithms, SRPA achieves the minimum test error while maintaining the training error at a reasonable level.  
 
\begin{figure}[htbp]
    \centering

    \begin{minipage}[b]{0.45\textwidth}
        \centering
        \includegraphics[width=\textwidth]{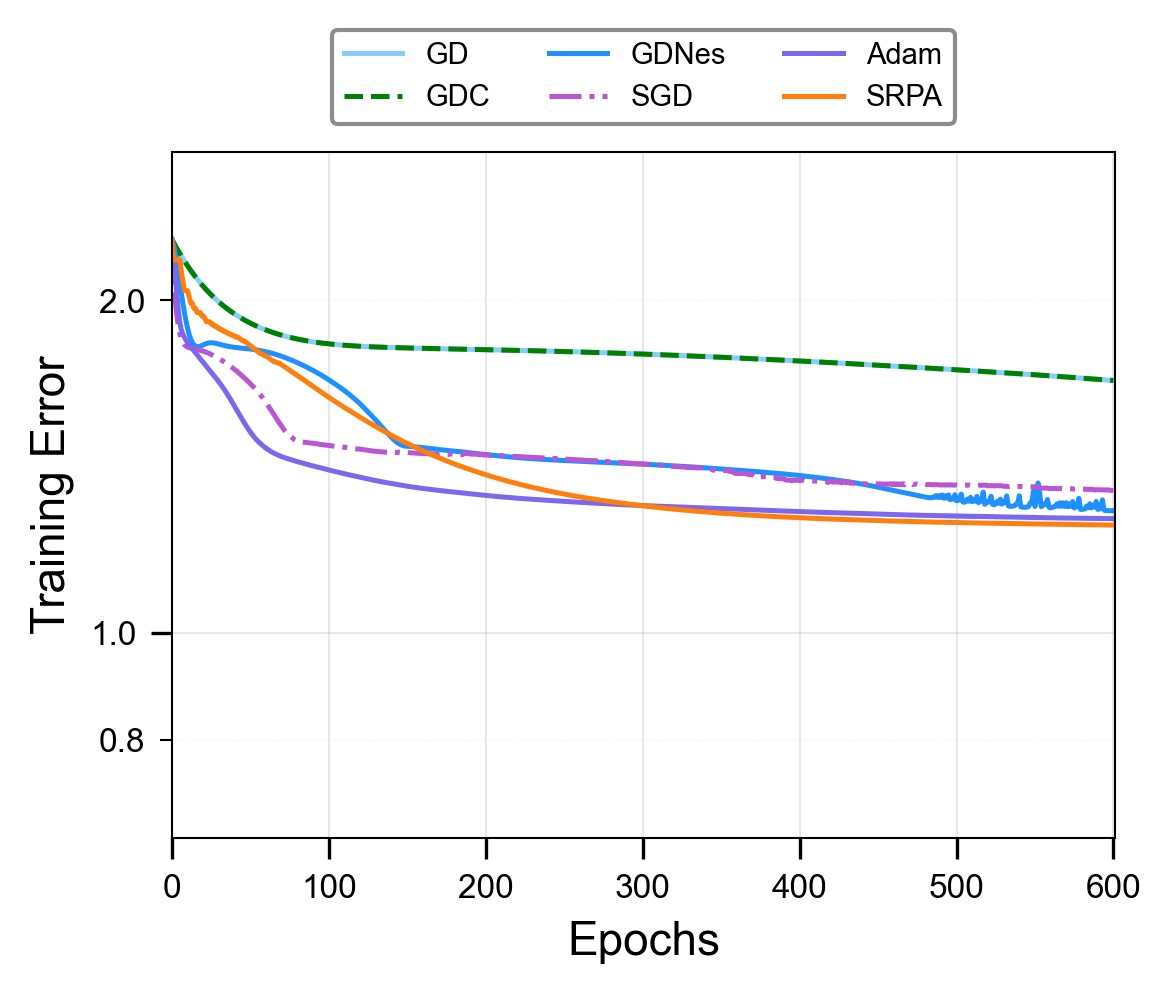}
    \end{minipage}
    \hfill
    \begin{minipage}[b]{0.45\textwidth}
        \centering
        \includegraphics[width=\textwidth]{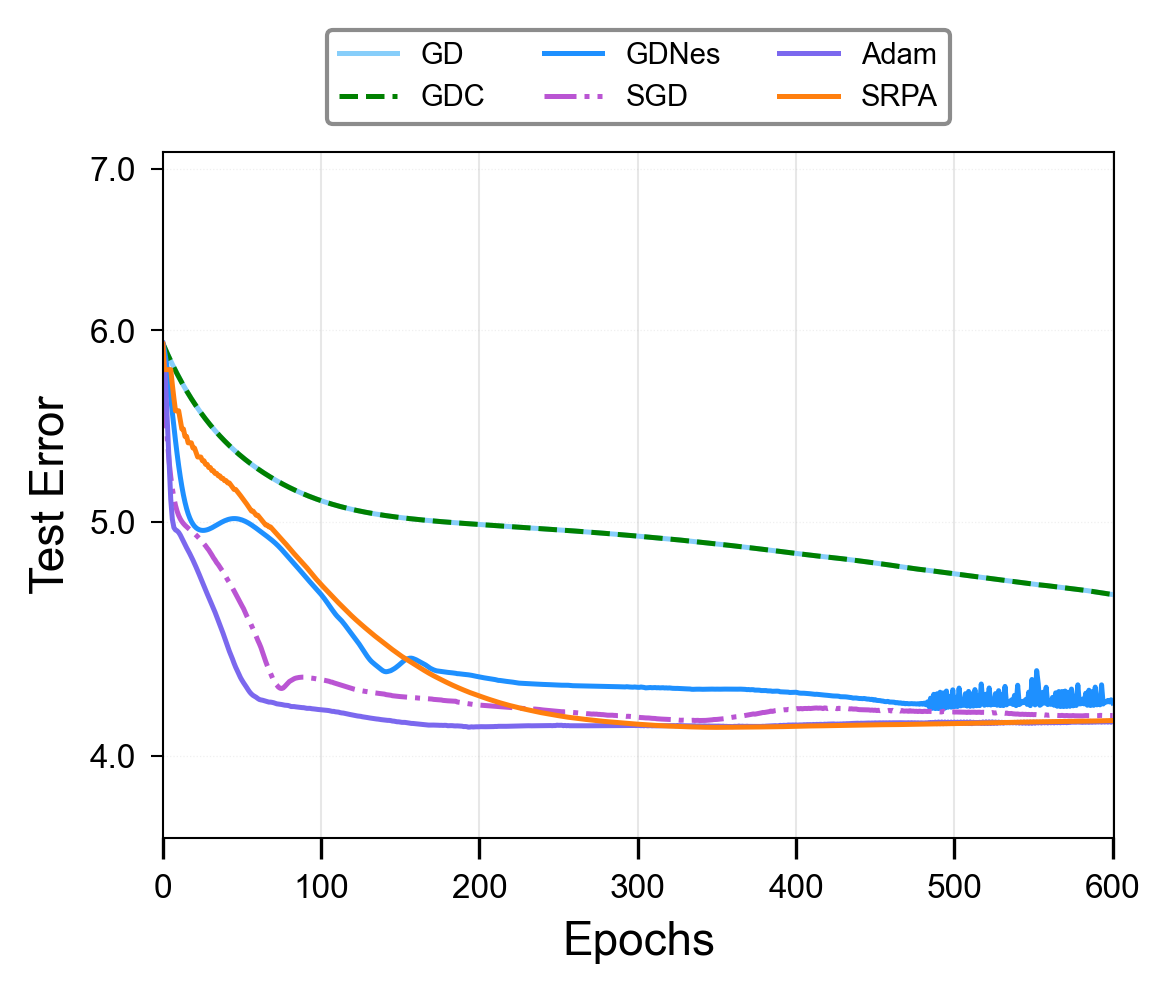} 
    \end{minipage} 

    \caption{Comparison of training and test errors on the TIMIT dataset across epochs.}
    \label{fig:600epochs_epoch-TIMIT}
\end{figure}

\begin{table}[t]
\centering
\caption{Minimum test errors and corresponding training errors generated by different optimization algorithms in the stable stage after convergence on different datasets. 
} 
    \begin{tabular}{l cc cc cc }
        \toprule
        \multirow{2}{*}{Algorithm} & \multicolumn{2}{c}{Synthetic} & \multicolumn{2}{c}{Volatility of S\&P Index} & \multicolumn{2}{c}{TIMIT} \\
        \cmidrule(lr){2-3} \cmidrule(lr){4-5} \cmidrule(lr){6-7}
        & {\textbf{TrainErr}} & {\textbf{TestErr}} & {\textbf{TrainErr}} & {\textbf{TestErr}} & {\textbf{TrainErr}} & {\textbf{TestErr}} \\
        \midrule
        GD   & 2.1747 & 6.2849 & 0.0045 & 0.0147 & 1.6922 & 4.6653 \\
        GDC  & 2.1747 & 6.2849 & 0.0051 & 0.0135 & 1.6922 & 4.6653 \\
        GDNes & 1.1863 & 5.9812 & 0.4572 & 0.0286 & 1.3258 & 4.1846 \\
        SGD  & 2.1747 & 6.2849 & 0.0046 & 0.0149 & 1.4099 & 4.1374 \\
        Adam & 1.2195 & 5.6419 & 0.0305 & 0.0148 & 1.3349 & 4.1113 \\
        SRPA & \textbf{0.4958} & \textbf{5.5864} & \textbf{0.0042} & \textbf{0.0083} & \textbf{1.2832} & \textbf{4.1105} \\
        \bottomrule
    \end{tabular} 
\label{tab:sp500-TIMIT}
\end{table}

\section{Conclusion}

This paper investigates nonconvex nonsmooth optimization problem \eqref{l1pen}, with a particular focus on the complexity bounds for finding approximate first-order and second-order d-stationary points of \eqref{l1pen_r}. 
First, we identify easily verifiable conditions, under which an approximate first-order d-stationary point of \eqref{l1pen} is also an approximate second-order d-stationary point of \eqref{l1pen}. 
Second, we propose the sequential regularized piecewise affine algorithm, SRPA, for finding an approximate second-order d-stationary point of \eqref{l1pen_r} with complexity guarantee. By exploiting the specific structure of the composite terms, our algorithm constructs piecewise affine approximations and solves quadratically regularized subproblems. 
The active-set strategy is incorporated to further alleviate the computational cost for each iteration. 
The algorithm SRPA is proven to generate an $ \epsilon $-approximate second-order d-stationary point of \eqref{l1pen_r} within $ \mathcal{O}(\epsilon^{-2}) $ iterations under certain conditions. 
Finally, numerical experiments on the synthetic, Volatility of S\&P Index, and TIMIT datasets for RNN training validate the practical effectiveness of SRPA.

\bibliographystyle{abbrv}
\bibliography{references}

\begin{thebibliography}{10}

\bibitem{ALT2025}
J.~H. Alcantara, C.-p. Lee, and A.~Takeda.
\newblock A four-operator splitting algorithm for nonconvex and nonsmooth
  optimization.
\newblock {\em SIAM J. Optim.}, 35(3):1846--1872, 2025.

\bibitem{bengio2009learning}
Y.~Bengio.
\newblock Learning deep architectures for {AI}.
\newblock {\em Found. Trends Mach. Learn.}, 2:1--127, 2009.

\bibitem{Burke1985}
J.~V. Burke.
\newblock Descent methods for composite nondifferentiable optimization
  problems.
\newblock {\em Math. Program.}, 33(3):260--279, 1985.

\bibitem{Burke1987}
J.~V. Burke.
\newblock Second order necessary and sufficient conditions for convex composite
  {NDO}.
\newblock {\em Math. Program.}, 38(3):287--302, 1987.

\bibitem{CGT2022}
C.~Cartis, N.~Gould, and P.~Toint.
\newblock The evaluation complexity of finding high-order minimizers of
  nonconvex optimization.
\newblock {\em Int. Congress of Mathematicians}, 7:5256--5289, 2022.

\bibitem{CGT2011}
C.~Cartis, N.~I.~M. Gould, and P.~L. Toint.
\newblock On the evaluation complexity of composite function minimization with
  applications to nonconvex nonlinear programming.
\newblock {\em SIAM J. Optim.}, 21(4):1721--1739, 2011.

\bibitem{chollet2015keras}
F.~Chollet et~al.
\newblock Keras, 2015.

\bibitem{CP-book}
Y.~Cui and J.-S. Pang.
\newblock {\em Modern Nonconvex Nondifferentiable Optimization}.
\newblock Society for Industrial and Applied Mathematics, Philadelphia, PA,
  2021.

\bibitem{DP2019}
D.~Drusvyatskiy and C.~Paquette.
\newblock Efficiency of minimizing compositions of convex functions and smooth
  maps.
\newblock {\em Math. Program.}, 178(1):503--558, 2019.

\bibitem{FWG2019}
S.~Fiege, A.~Walther, and A.~Griewank.
\newblock An algorithm for nonsmooth optimization by successive piecewise
  linearization.
\newblock {\em Math. Program.}, 177(1):343--370, 2019.

\bibitem{Fletcher1981PMO}
R.~Fletcher.
\newblock {\em Non-Smooth Optimization}, chapter~14, pages 357--416.
\newblock John Wiley $\&$ Sons, Ltd, Hoboken, NJ, USA, 1981.

\bibitem{Fletcher1982}
R.~Fletcher.
\newblock {\em A model algorithm for composite nondifferentiable optimization
  problems}, pages 67--76.
\newblock Springer Berlin Heidelberg, Berlin, Heidelberg, 1982.

\bibitem{garofolo1993darpa}
J.~S. Garofolo, L.~F. Lamel, W.~M. Fisher, J.~G. Fiscus, D.~S. Pallett, and
  D.~N. L.
\newblock Darpa {TIMIT} acoustic-phonetic continous speech corpus {CD-ROM}.
\newblock {\em NIST Interagency/Internal Report (NISTIR), National Institute of
  Standards and Technology, Gaithersburg, MD}, 1993.

\bibitem{glorot2010understanding}
X.~Glorot and Y.~Bengio.
\newblock Understanding the difficulty of training deep feedforward neural
  networks.
\newblock In {\em Proceedings of the 13th International Conference on
  Artificial Intelligence and Statistics}, pages 249--256, Chia Laguna Resort,
  Sardinia, Italy, 2010. PMLR.

\bibitem{goodfellow2016deep}
I.~Goodfellow, Y.~Bengio, and A.~Courville.
\newblock {\em Deep learning}.
\newblock MIT Press, Cambridge, MA, 2016.

\bibitem{GST2021}
S.~Gratton, E.~Simon, and P.~L. Toint.
\newblock An algorithm for the minimization of nonsmooth nonconvex functions
  using inexact evaluations and its worst-case complexity.
\newblock {\em Math. Program.}, 187(1):1--24, 2021.

\bibitem{Griewank2013}
A.~Griewank.
\newblock On stable piecewise linearization and generalized algorithmic
  differentiation.
\newblock {\em Optim. Methods Softw.}, 28(6):1139--1178, 2013.

\bibitem{GW2019}
A.~Griewank and A.~Walther.
\newblock Relaxing kink qualifications and proving convergence rates in
  piecewise smooth optimization.
\newblock {\em SIAM J. Optim.}, 29(1):262--289, 2019.

\bibitem{HT2023}
N.~Hallak and M.~Teboulle.
\newblock An adaptive lagrangian-based scheme for nonconvex composite
  optimization.
\newblock {\em Math. Oper. Research}, 48(4):2337--2352, 2023.

\bibitem{he2015delving}
K.~He, X.~Zhang, S.~Ren, and J.~Sun.
\newblock Delving deep into rectifiers: surpassing human-level performance on
  imagenet classification.
\newblock In {\em Proceedings of the 2015 IEEE International Conference on
  Computer Vision (ICCV)}, pages 1026--1034, Santiago, Chile, 2015. IEEE
  Computer Society.

\bibitem{Ioffe1979b}
A.~D. Ioffe.
\newblock Necessary and sufficient conditions for a local minimum. 3: Second
  order conditions and augmented duality.
\newblock {\em SIAM J. Control Optim.}, 17(2):266--288, 1979.

\bibitem{JWC2025}
L.~Jin, X.~Wang, and X.~Chen.
\newblock Nonconvex nonsmooth multicomposite optimization and its applications
  to recurrent neural networks.
\newblock {\em SIAM J. Optim.}, 35:2343--2371, 2025.

\bibitem{klambauer2017self}
G.~Klambauer, T.~Unterthiner, A.~Mayr, and S.~Hochreiter.
\newblock Self-normalizing neural networks.
\newblock In {\em Proceedings of the 31st International Conference on Neural
  Information Processing Systems}, pages 972--981, Red Hook, NY, USA, 2017.
  Curran Associates Inc.

\bibitem{LW2016}
A.~S. Lewis and S.~J. Wright.
\newblock A proximal method for composite minimization.
\newblock {\em Math. Program.}, 158(1):501--546, 2016.

\bibitem{LRP-LLC}
W.~Liu, X.~Liu, and X.~Chen.
\newblock Linearly constrained nonsmooth optimization for training
  autoencoders.
\newblock {\em SIAM J. Optim.}, 32(3):1931--1957, 2022.

\bibitem{NN2025}
Y.~Nabou and I.~Necoara.
\newblock Regularized higher-order taylor approximation methods for nonlinear
  least-squares.
\newblock {\em arxiv: 2503.02370}, 2025.

\bibitem{PRA2017B-sta}
J.-S. Pang, M.~Razaviyayn, and A.~Alvarado.
\newblock Computing b-stationary points of nonsmooth dc programs.
\newblock {\em Math. Oper. Research}, 42(1):95--118, 2017.

\bibitem{Penot1994a}
J.-P. Penot.
\newblock Optimality conditions in mathematical programming and composite
  optimization.
\newblock {\em Math. Program.}, 67(1-3):225--245, 1994.

\bibitem{PR1995}
R.~Poliquin and T.~Rockafellar.
\newblock {\em Second-Order Nonsmooth Analysis in Nonlinear Programming}, pages
  322--350.
\newblock World Scientific, USA, 1995.

\bibitem{powell1983general}
M.~Powell.
\newblock General algorithms for discrete nonlinear approximation calculations.
\newblock {\em Report DAMTP 1983/NA2}, 1983.

\bibitem{rockafellar2009variational}
R.~T. Rockafellar and R.~J.-B. Wets.
\newblock {\em Variational {A}nalysis}.
\newblock Springer, Berlin, Heidelberg, 2009.

\bibitem{Scholtes2012}
S.~Scholtes.
\newblock {\em Introduction to Piecewise Differentiable Equations}.
\newblock Springer New York, New York, NY, 2012.

\bibitem{wang2025thesis}
Y.~Wang.
\newblock {\em An augmented Lagrangian method for training recurrent neural
  networks with sample average approximation}.
\newblock {Ph.D.} thesis, The Hong Kong Polytechnic University, Hong Kong,
  2025.
\newblock https://theses.lib.polyu.edu.hk/handle/200/13783.

\bibitem{WZC2024}
Y.~Wang, C.~Zhang, and X.~Chen.
\newblock An augmented {Lagrangian} method for training recurrent neural
  networks.
\newblock {\em SIAM J. Sci. Comput.}, 47(1):C22--C51, 2025.

\bibitem{Y141}
Y.~Yuan.
\newblock Conditions for convergence of trust region algorithms for nonsmooth
  optimization.
\newblock {\em Math. Program.}, 31(2):220--228, 1985.

\bibitem{Proxlinear2021}
J.~Zhang and L.~Xiao.
\newblock Stochastic variance-reduced prox-linear algorithms for nonconvex
  composite optimization.
\newblock {\em Math. Program.}, 195:649--691, 2022.

\end{thebibliography}

\section*{Statements and Declarations}

%
%
%
%
\subsection*{Funding}
This paper is supported in part by Hong Kong Research Grant Council grants (PolyU15300124, PolyU15300625) and National Natural Science Foundation of China (No 11271278). 

\subsection*{Competing Interests}
The authors declare no competing interests.

\subsection*{Data Availability}
Data used for this work are either randomly generated or benchmark data, as described in the paper. 
The datasets generated during the current study are available at \url{https://github.com/LingziJIN-code/SRPA_RNN}.

\subsection*{Code availability}
The code and corresponding description are available at \url{https://github.com/LingziJIN-code/SRPA_RNN}.

\appendix 

\section{Parameter tuning for SRPA and SGDs in Section \ref{sec:4.4}}\label{sec:A1}

The means and the standard deviations of test errors generated by SRPA are listed in Tables \ref{tab:SRPA-tune-1}-\ref{tab:SRPA-tune-3}, where each best test error is highlighted in bold.

\begin{table}[htbp]
\centering
\caption{The averaged test errors generated by SRPA under different $ (\rho_{1}, \eta_{1}, \eta_{2}) $ and initialization strategies on the Synthetic dataset: the number of iterations and repetitions are 50 and 10, respectively.
Means and standard deviations are approximated as 3 decimal places and 2 significant figures, respectively.}
\label{tab:SRPA-tune-1}

\resizebox{\textwidth}{!}{%
\begin{tabular}{l l l l l l l l}
\toprule
$\rho_{1}$ & $(\eta_{1}, \eta_{2})$ & $\mathcal{N}(0, 10^{-3})$ & $\mathcal{N}(0, 10^{-2})$ & $\mathcal{N}(0, 10^{-1})$ & He & Glorot & LeCun \\ 
\midrule
0.5 & (0.9, 1.1) & $6.158 \pm 0.60$ & $6.134 \pm 0.72$ & $6.096 \pm 0.58$ & $6.962 \pm 1.2$ & $6.335 \pm 0.67$ & $6.328 \pm 0.62$ \\ 
    & (0.9, 1.2) & $6.213 \pm 0.60$ & $6.166 \pm 0.74$ & $6.089 \pm 0.58$ & $6.980 \pm 1.2$ & $6.366 \pm 0.69$ & $6.323 \pm 0.62$ \\ 
    & (0.9, 1.3) & $6.199 \pm 0.56$ & $6.166 \pm 0.70$ & $\mathbf{6.073 \pm 0.58}$ & $6.957 \pm 1.2$ & $6.365 \pm 0.68$ & $6.337 \pm 0.63$ \\ 
    & (0.99, 1.1) & $6.206 \pm 0.58$ & $6.180 \pm 0.73$ & $6.092 \pm 0.59$ & $6.975 \pm 1.2$ & $6.351 \pm 0.65$ & $6.350 \pm 0.64$ \\ 
    & (0.99, 1.2) & $6.248 \pm 0.58$ & $6.170 \pm 0.73$ & $6.081 \pm 0.59$ & $6.975 \pm 1.2$ & $6.331 \pm 0.67$ & $6.285 \pm 0.63$ \\ 
    & (0.99, 1.3) & $6.213 \pm 0.55$ & $6.207 \pm 0.74$ & $6.077 \pm 0.59$ & $6.994 \pm 1.2$ & $6.334 \pm 0.66$ & $6.326 \pm 0.64$ \\ 
    & (0.999, 1.1) & $6.223 \pm 0.57$ & $6.179 \pm 0.73$ & $6.079 \pm 0.59$ & $6.983 \pm 1.2$ & $6.379 \pm 0.68$ & $6.344 \pm 0.64$ \\ 
    & (0.999, 1.2) & $6.253 \pm 0.58$ & $6.185 \pm 0.73$ & $6.080 \pm 0.59$ & $6.975 \pm 1.2$ & $6.331 \pm 0.67$ & $6.301 \pm 0.65$ \\ 
    & (0.999, 1.3) & $6.213 \pm 0.55$ & $6.227 \pm 0.73$ & $6.077 \pm 0.59$ & $6.994 \pm 1.2$ & $6.334 \pm 0.66$ & $6.326 \pm 0.64$ \\ 
\addlinespace
1 & Any\textsuperscript{*} & $6.981 \pm 0.52$ & $6.505 \pm 0.58$ & $6.192 \pm 0.64$ & $6.966 \pm 0.89$ & $6.188 \pm 0.72$ & $6.229 \pm 0.72$ \\ 
\addlinespace
2 & Any\textsuperscript{*} & $7.289 \pm 0.032$ & $7.095 \pm 0.30$ & $6.714 \pm 0.60$ & $7.095 \pm 0.82$ & $6.377 \pm 0.79$ & $6.401 \pm 0.78$ \\ 
\bottomrule
\end{tabular}
}

\parbox{\textwidth}{\textsuperscript{*}``Any'' refers to any $ (\eta_{1}, \eta_{2}) \in \{ 0.9, 0.99, 0.999 \} \times \{ 1.1, 1.2, 1.3 \} $.}

\end{table}

\begin{table}[htbp]
\centering
\caption{The averaged test errors generated by SRPA under different $ (\rho_{1}, \eta_{1}, \eta_{2}) $ and initialization strategies on the Volatility of S\&P Index dataset: the number of iterations and repetitions are 100 and 3, respectively.
Means and standard deviations are approximated as 4 decimal places and 2 significant figures, respectively.}
\label{tab:SRPA-tune-2}

\resizebox{\textwidth}{!}{%
\begin{tabular}{l l l l l l l}
\toprule
$\rho_{1}$ & $(\eta_{1}, \eta_{2})$ & $\mathcal{N}(0, 10^{-3})$ & $\mathcal{N}(0, 10^{-2})$ & $\mathcal{N}(0, 10^{-1})$ & Glorot & LeCun \\ 
\midrule
0.01 & (0.7, 1.1) & $1.4745 \pm 0.070$ & $0.5947 \pm 0.13$ & $0.0391 \pm 0.0079$ & $0.0319 \pm 0.0031$ & $0.0372 \pm 0.0065$ \\ 
     & (0.7, 1.2) & $1.3364 \pm 0.10$ & $0.3689 \pm 0.018$ & $0.0386 \pm 0.0081$ & $0.0303 \pm 0.0045$ & $0.0371 \pm 0.0062$ \\ 
     & (0.7, 1.3) & $1.2752 \pm 0.15$ & $0.3396 \pm 0.12$ & $0.0394 \pm 0.0077$ & $0.0337 \pm 0.0051$ & $0.0352 \pm 0.0062$ \\ 
     & (0.8, 1.1) & $1.4787 \pm 0.076$ & $0.4585 \pm 0.11$ & $0.0395 \pm 0.0077$ & $0.0326 \pm 0.0038$ & $0.0376 \pm 0.0061$ \\ 
     & (0.8, 1.2) & $1.3387 \pm 0.10$ & $0.3196 \pm 0.088$ & $0.0388 \pm 0.0080$ & $0.0309 \pm 0.0039$ & $0.0369 \pm 0.0062$ \\ 
     & (0.8, 1.3) & $1.2752 \pm 0.15$ & $0.3396 \pm 0.12$ & $0.0389 \pm 0.0081$ & $0.0337 \pm 0.0051$ & $0.0362 \pm 0.0049$ \\ 
     & (0.9, 1.1) & $1.5066 \pm 0.086$ & $0.2853 \pm 0.088$ & $0.0388 \pm 0.0076$ & $0.0335 \pm 0.0050$ & $0.0376 \pm 0.0060$ \\ 
     & (0.9, 1.2) & $1.3449 \pm 0.11$ & $0.2559 \pm 0.043$ & $0.0388 \pm 0.0084$ & $0.0323 \pm 0.0060$ & $0.0370 \pm 0.0062$ \\ 
     & (0.9, 1.3) & $1.2752 \pm 0.15$ & $0.2877 \pm 0.063$ & $0.0393 \pm 0.0078$ & $0.0337 \pm 0.0051$ & $0.0362 \pm 0.0049$ \\ 
\addlinespace
0.03 & (0.7, 1.1) & $1.3106 \pm 0.10$ & $0.4616 \pm 0.13$ & $0.0372 \pm 0.0061$ & $\mathbf{0.0271 \pm 0.0056}$ & $0.0350 \pm 0.0079$ \\ 
     & (0.7, 1.2) & $1.2172 \pm 0.094$ & $0.3350 \pm 0.014$ & $0.0369 \pm 0.0063$ & $0.0277 \pm 0.0063$ & $0.0350 \pm 0.0079$ \\ 
     & (0.7, 1.3) & $1.1833 \pm 0.13$ & $0.2856 \pm 0.050$ & $0.0381 \pm 0.0069$ & $0.0282 \pm 0.0069$ & $0.0350 \pm 0.0079$ \\ 
     & (0.8, 1.1) & $1.3519 \pm 0.055$ & $0.3863 \pm 0.16$ & $0.0378 \pm 0.0068$ & $0.0279 \pm 0.0065$ & $0.0350 \pm 0.0079$ \\ 
     & (0.8, 1.2) & $1.2831 \pm 0.11$ & $0.2857 \pm 0.083$ & $0.0369 \pm 0.0063$ & $0.0277 \pm 0.0063$ & $0.0350 \pm 0.0079$ \\ 
     & (0.8, 1.3) & $1.2776 \pm 0.15$ & $0.2298 \pm 0.082$ & $0.0386 \pm 0.0075$ & $0.0282 \pm 0.0069$ & $0.0350 \pm 0.0079$ \\ 
     & (0.9, 1.1) & $1.4075 \pm 0.087$ & $0.2794 \pm 0.097$ & $0.0386 \pm 0.0081$ & $0.0286 \pm 0.0075$ & $0.0350 \pm 0.0079$ \\ 
     & (0.9, 1.2) & $1.2831 \pm 0.11$ & $0.2561 \pm 0.072$ & $0.0386 \pm 0.0087$ & $0.0291 \pm 0.0081$ & $0.0350 \pm 0.0079$ \\ 
     & (0.9, 1.3) & $1.2776 \pm 0.15$ & $0.2524 \pm 0.060$ & $0.0386 \pm 0.0075$ & $0.0282 \pm 0.0069$ & $0.0350 \pm 0.0079$ \\ 
\addlinespace
0.05 & (0.7, 1.1) & $0.8157 \pm 0.16$ & $0.4144 \pm 0.073$ & $0.0387 \pm 0.0072$ & $0.0422 \pm 0.012$ & $0.0542 \pm 0.021$ \\ 
     & (0.7, 1.2) & $0.8247 \pm 0.096$ & $0.3666 \pm 0.042$ & $0.0387 \pm 0.0072$ & $0.0422 \pm 0.012$ & $0.0542 \pm 0.021$ \\ 
     & (0.7, 1.3) & $0.7834 \pm 0.25$ & $0.3196 \pm 0.11$ & $0.0387 \pm 0.0072$ & $0.0422 \pm 0.012$ & $0.0542 \pm 0.021$ \\ 
     & (0.8, 1.1) & $0.8980 \pm 0.16$ & $0.3613 \pm 0.080$ & $0.0387 \pm 0.0072$ & $0.0422 \pm 0.012$ & $0.0542 \pm 0.021$ \\ 
     & (0.8, 1.2) & $0.8931 \pm 0.18$ & $0.3294 \pm 0.12$ & $0.0387 \pm 0.0072$ & $0.0422 \pm 0.012$ & $0.0542 \pm 0.021$ \\ 
     & (0.8, 1.3) & $0.8793 \pm 0.17$ & $0.3258 \pm 0.11$ & $0.0387 \pm 0.0072$ & $0.0422 \pm 0.012$ & $0.0542 \pm 0.021$ \\ 
     & (0.9, 1.1) & $1.0118 \pm 0.15$ & $0.2075 \pm 0.050$ & $0.0387 \pm 0.0072$ & $0.0422 \pm 0.012$ & $0.0542 \pm 0.021$ \\ 
     & (0.9, 1.2) & $0.8931 \pm 0.18$ & $0.2060 \pm 0.035$ & $0.0387 \pm 0.0072$ & $0.0422 \pm 0.012$ & $0.0542 \pm 0.021$ \\ 
     & (0.9, 1.3) & $0.8793 \pm 0.17$ & $0.2766 \pm 0.060$ & $0.0387 \pm 0.0072$ & $0.0422 \pm 0.012$ & $0.0542 \pm 0.021$ \\ 
\bottomrule
\end{tabular}
}

\end{table}

\begin{table}[htbp]
\centering
\caption{The averaged test errors generated by SRPA under different $ (\rho_{1}, \eta_{1}, \eta_{2}) $ and initialization strategies on the TIMIT dataset: the number of iterations and repetitions are 100 and 3, respectively. 
Means and standard deviations are approximated as 3 decimal places and 2 significant figures, respectively.}
\label{tab:SRPA-tune-3}
 
\resizebox{0.7\textwidth}{!}{%
\begin{tabular}{l l l l l}
\toprule
$\rho_{1}$ & $(\eta_{1}, \eta_{2})$ & $\mathcal{N}(0, 10^{-3})$ & $\mathcal{N}(0, 10^{-2})$ & $\mathcal{N}(0, 10^{-1})$ \\ 
\midrule
0.001 & (0.7, 1.1) & $5.042 \pm 0.12$ & $4.780 \pm 0.049$ & $5.052 \pm 0.078$ \\ 
      & (0.7, 1.3) & $5.149 \pm 0.059$ & $4.886 \pm 0.14$ & $5.042 \pm 0.073$ \\ 
      & (0.7, 1.5) & $5.182 \pm 0.058$ & $4.918 \pm 0.16$ & $5.020 \pm 0.080$ \\ 
      & (0.8, 1.1) & $5.040 \pm 0.12$ & $4.849 \pm 0.047$ & $5.069 \pm 0.084$ \\ 
      & (0.8, 1.3) & $5.168 \pm 0.089$ & $4.927 \pm 0.096$ & $5.046 \pm 0.074$ \\ 
      & (0.8, 1.5) & $5.186 \pm 0.055$ & $4.953 \pm 0.14$ & $5.066 \pm 0.037$ \\ 
      & (0.9, 1.1) & $5.058 \pm 0.13$ & $4.863 \pm 0.087$ & $5.093 \pm 0.085$ \\ 
      & (0.9, 1.3) & $5.189 \pm 0.074$ & $4.956 \pm 0.079$ & $5.059 \pm 0.070$ \\ 
      & (0.9, 1.5) & $5.196 \pm 0.048$ & $4.979 \pm 0.14$ & $5.101 \pm 0.083$ \\ 
\addlinespace
0.003 & (0.7, 1.1) & $5.031 \pm 0.20$ & $\mathbf{4.768 \pm 0.054}$ & $5.008 \pm 0.088$ \\ 
      & (0.7, 1.3) & $5.193 \pm 0.12$ & $4.863 \pm 0.16$ & $5.020 \pm 0.10$ \\ 
      & (0.7, 1.5) & $5.204 \pm 0.099$ & $4.926 \pm 0.21$ & $5.041 \pm 0.040$ \\ 
      & (0.8, 1.1) & $5.106 \pm 0.19$ & $4.774 \pm 0.081$ & $5.032 \pm 0.078$ \\ 
      & (0.8, 1.3) & $5.216 \pm 0.10$ & $4.902 \pm 0.20$ & $5.047 \pm 0.067$ \\ 
      & (0.8, 1.5) & $5.208 \pm 0.098$ & $4.963 \pm 0.17$ & $5.073 \pm 0.076$ \\ 
      & (0.9, 1.1) & $5.111 \pm 0.19$ & $4.800 \pm 0.097$ & $5.056 \pm 0.075$ \\ 
      & (0.9, 1.3) & $5.189 \pm 0.070$ & $4.939 \pm 0.19$ & $5.047 \pm 0.067$ \\ 
      & (0.9, 1.5) & $5.217 \pm 0.096$ & $5.028 \pm 0.10$ & $5.073 \pm 0.076$ \\ 
\addlinespace
0.005 & (0.7, 1.1) & $5.024 \pm 0.13$ & $4.882 \pm 0.13$ & $5.010 \pm 0.079$ \\ 
      & (0.7, 1.3) & $5.161 \pm 0.078$ & $4.889 \pm 0.22$ & $5.015 \pm 0.094$ \\ 
      & (0.7, 1.5) & $5.235 \pm 0.11$ & $4.951 \pm 0.17$ & $5.012 \pm 0.086$ \\ 
      & (0.8, 1.1) & $5.078 \pm 0.16$ & $4.896 \pm 0.10$ & $5.022 \pm 0.088$ \\ 
      & (0.8, 1.3) & $5.218 \pm 0.10$ & $4.940 \pm 0.19$ & $5.043 \pm 0.065$ \\ 
      & (0.8, 1.5) & $5.244 \pm 0.092$ & $4.956 \pm 0.18$ & $5.057 \pm 0.036$ \\ 
      & (0.9, 1.1) & $5.101 \pm 0.16$ & $4.871 \pm 0.14$ & $5.036 \pm 0.089$ \\ 
      & (0.9, 1.3) & $5.225 \pm 0.091$ & $4.945 \pm 0.20$ & $5.042 \pm 0.065$ \\ 
      & (0.9, 1.5) & $5.255 \pm 0.11$ & $4.989 \pm 0.20$ & $5.094 \pm 0.078$ \\ 
\bottomrule
\end{tabular}
} 

\end{table}

The average test errors generated by GD, GDC, GDNes, SGD, and Adam are listed in Tables \ref{tab:GD-tune-1}-\ref{tab:GD-tune-3}, \ref{tab:GDC-tune-1}-\ref{tab:GDC-tune-3}, \ref{tab:GDNes-tune-1}-\ref{tab:GDNes-tune-3}, \ref{tab:SGD-tune-1}-\ref{tab:SGD-tune-3}, and \ref{tab:Adam-tune-1}-\ref{tab:Adam-tune-3}, respectively, where an epoch is one complete pass of the entire training dataset through the learning algorithm.
In Tables \ref{tab:GD-tune-1}-\ref{tab:Adam-tune-3}, the best test errors are highlighted in bold, which indicate their best parameters and initialization strategies.

\begin{table}[htbp]
\centering
\caption{The averaged test errors generated by GD under different learning rates and initialization strategies on the Synthetic dataset. The number of epochs and repetitions are 50 and 10, respectively.
Means and standard deviations are approximated as 3 decimal places and 2 significant figures, respectively.}
\label{tab:GD-tune-1}
 
\resizebox{\textwidth}{!}{%
\begin{tabular}{l l l l l l l}
\toprule
\textbf{lr\textsuperscript{*}} 
& $\mathcal{N}(0, 10^{-3})$ & $\mathcal{N}(0, 10^{-2})$ & $\mathcal{N}(0, 10^{-1})$ & He & Glorot & LeCun \\ 
\midrule
$10^{-4}$ & $6.288 \pm 5.6\text{e-}6$ & $6.287 \pm 5.5\text{e-}4$ & $\mathbf{6.274 \pm 0.053}$ & $26.331 \pm 32$ & $7.090 \pm 1.6$ & $6.890 \pm 1.4$ \\ 
$10^{-3}$ & $6.327 \pm 5.5\text{e-}6$ & $6.327 \pm 5.5\text{e-}4$ & $6.314 \pm 0.052$ & $9.019 \pm 3.2$ & $6.673 \pm 1.4$ & $6.572 \pm 1.2$ \\ 
$10^{-2}$ & $6.617 \pm 4.8\text{e-}6$ & $6.617 \pm 4.8\text{e-}4$ & $6.603 \pm 0.047$ & $6.932 \pm 1.8$ & $6.516 \pm 1.1$ & $6.499 \pm 0.99$ \\ 
$10^{-1}$ & $6.939 \pm 4.8\text{e-}6$ & $6.939 \pm 4.8\text{e-}4$ & $6.909 \pm 0.046$ & $6.808 \pm 0.70$ & $6.558 \pm 0.52$ & $6.570 \pm 0.49$ \\ 
$1$ & $6.706 \pm 0.18$ & $6.466 \pm 0.31$ & $6.317 \pm 0.35$ & \text{NaN}\textsuperscript{**} & $6.440 \pm 0.47$ & $6.490 \pm 0.47$ \\ 
\bottomrule
\end{tabular}
}

\parbox{\textwidth}{
\textsuperscript{*} ``lr'' refers to the learning rate. \\
\textsuperscript{**} ``NaN'' indicates a numerical overflow has occurred.
}

\end{table}

\begin{table}[htbp]
\centering
\caption{The averaged test errors generated by GD under different learning rates and initialization strategies on the Volatility of S\&P Index dataset. The number of epochs and repetitions are 100 and 3, respectively.
Means and standard deviations are approximated as 4 decimal places and 2 significant figures, respectively.}
\label{tab:GD-tune-2}
 
\resizebox{\textwidth}{!}{%
\begin{tabular}{l l l l l l l}
\toprule
\textbf{lr\textsuperscript{*}} 
& $\mathcal{N}(0, 10^{-3})$ & $\mathcal{N}(0, 10^{-2})$ & $\mathcal{N}(0, 10^{-1})$ & He & Glorot & LeCun \\ 
\midrule
$10^{-4}$ & $1.5749 \pm 9.4\text{e-}6$ & $1.5748 \pm 9.5\text{e-}4$ & $1.5588 \pm 0.12$ & $7.9668 \pm 3.2$ & $3.1767 \pm 2.1$ & $2.7084 \pm 1.9$ \\ 
$10^{-3}$ & $1.6252 \pm 1.1\text{e-}5$ & $1.6232 \pm 0.0011$ & $1.4206 \pm 0.15$ & $2.7643 \pm 2.2$ & $1.6400 \pm 1.5$ & $1.6789 \pm 1.5$ \\ 
$10^{-2}$ & $1.7398 \pm 1.8\text{e-}4$ & $1.6192 \pm 0.018$ & $0.1719 \pm 0.13$ & \text{NaN}\textsuperscript{**} & $0.2174 \pm 0.10$ & $0.2195 \pm 0.13$ \\ 
$10^{-1}$ & $0.0234 \pm 0.0010$ & $\mathbf{0.0228 \pm 8\text{e-}4}$ & $0.0415 \pm 0.0024$ & \text{NaN}\textsuperscript{**} & $0.0439 \pm 0.0062$ & $0.0323 \pm 0.0067$ \\ 
$1$ & \text{NaN}\textsuperscript{**} & \text{NaN}\textsuperscript{**} & \text{NaN}\textsuperscript{**} & \text{NaN}\textsuperscript{**} & \text{NaN}\textsuperscript{**} & \text{NaN}\textsuperscript{**} \\ 
\bottomrule
\end{tabular}
}

\parbox{\textwidth}{
\textsuperscript{*} ``lr'' refers to the learning rate. \\
\textsuperscript{**} ``NaN'' indicates a numerical overflow has occurred.
}

\end{table}

\begin{table}[htbp]
\centering
\caption{The averaged test errors generated by GD under different learning rates and initialization strategies on the TIMIT dataset. The number of epochs and repetitions are 100 and 3, respectively.
Means and standard deviations are approximated as 3 decimal places and 2 significant figures, respectively.}
\label{tab:GD-tune-3}

\small
 
\begin{tabular}{l l l l}
\toprule
\textbf{lr\textsuperscript{*}} 
& $\mathcal{N}(0, 10^{-3})$ & $\mathcal{N}(0, 10^{-2})$ & $\mathcal{N}(0, 10^{-1})$ \\ 
\midrule
$10^{-4}$ & $5.927 \pm 3.2\text{e-}6$ & $5.927 \pm 3.2\text{e-}4$ & $6.723 \pm 0.42$ \\ 
$10^{-3}$ & $5.925 \pm 3.2\text{e-}6$ & $5.925 \pm 3.1\text{e-}4$ & $6.602 \pm 0.32$ \\ 
$10^{-2}$ & $5.906 \pm 2.8\text{e-}6$ & $5.907 \pm 2.6\text{e-}4$ & $6.142 \pm 0.061$ \\ 
$10^{-1}$ & $5.743 \pm 3.3\text{e-}6$ & $5.743 \pm 2.5\text{e-}4$ & $5.766 \pm 0.0076$ \\ 
$1$ & $5.146 \pm 9.4\text{e-}4$ & $\mathbf{5.131 \pm 0.021}$ & $5.147 \pm 0.0033$ \\ 
\bottomrule
\end{tabular}

\end{table}

\begin{table}[htbp]
\centering
\caption{The averaged test errors generated by GDC under different parameters and initialization strategies on the Synthetic dataset. The number of epochs and repetitions are 50 and 10, respectively.
Means and standard deviations are approximated as 3 decimal places and 2 significant figures, respectively.}
\label{tab:GDC-tune-1}
 
\resizebox{\textwidth}{!}{%
\begin{tabular}{l l l l l l l l}
\toprule
\textbf{lr\textsuperscript{*}} & \textbf{cln\textsuperscript{*}} & $\mathcal{N}(0, 10^{-3})$ & $\mathcal{N}(0, 10^{-2})$ & $\mathcal{N}(0, 10^{-1})$ & He & Glorot & LeCun \\ 
\midrule
$10^{-4}$ & 0.5 & $6.288 \pm 5.6\text{e-}6$ & $6.287 \pm 5.5\text{e-}4$ & $\mathbf{6.274 \pm 0.053}$ & $142.275 \pm 3.0\text{e}2$ & $7.188 \pm 1.7$ & $6.952 \pm 1.5$ \\
  & 1.0 & $6.288 \pm 5.6\text{e-}6$ & $6.287 \pm 5.5\text{e-}4$ & $\mathbf{6.274 \pm 0.053}$ & $136.023 \pm 2.9\text{e}2$ & $7.156 \pm 1.7$ & $6.927 \pm 1.4$ \\
  & 2.0 & $6.288 \pm 5.6\text{e-}6$ & $6.287 \pm 5.5\text{e-}4$ & $\mathbf{6.274 \pm 0.053}$ & $124.333 \pm 2.6\text{e}2$ & $7.106 \pm 1.6$ & $6.891 \pm 1.4$ \\
  & 4.0 & $6.288 \pm 5.6\text{e-}6$ & $6.287 \pm 5.5\text{e-}4$ & $\mathbf{6.274 \pm 0.053}$ & $104.463 \pm 2.1\text{e}2$ & $7.090 \pm 1.6$ & $6.890 \pm 1.4$ \\
\addlinespace
$10^{-3}$ & 0.5 & $6.327 \pm 5.5\text{e-}6$ & $6.327 \pm 5.5\text{e-}4$ & $6.314 \pm 0.052$ & $95.280 \pm 1.9\text{e}2$ & $6.927 \pm 1.5$ & $6.754 \pm 1.3$ \\
  & 1.0 & $6.327 \pm 5.5\text{e-}6$ & $6.327 \pm 5.5\text{e-}4$ & $6.314 \pm 0.052$ & $62.596 \pm 1.2\text{e}2$ & $6.764 \pm 1.4$ & $6.628 \pm 1.3$ \\
  & 2.0 & $6.327 \pm 5.5\text{e-}6$ & $6.327 \pm 5.5\text{e-}4$ & $6.314 \pm 0.052$ & $29.675 \pm 43$ & $6.675 \pm 1.4$ & $6.572 \pm 1.2$ \\
  & 4.0 & $6.327 \pm 5.5\text{e-}6$ & $6.327 \pm 5.5\text{e-}4$ & $6.314 \pm 0.052$ & $11.409 \pm 5.4$ & $6.673 \pm 1.4$ & $6.572 \pm 1.2$ \\
\addlinespace
$10^{-2}$ & 0.5 & $6.617 \pm 4.8\text{e-}6$ & $6.617 \pm 4.8\text{e-}4$ & $6.603 \pm 0.047$ & $7.767 \pm 2.2$ & $6.543 \pm 1.1$ & $6.517 \pm 1.0$ \\
  & 1.0 & $6.617 \pm 4.8\text{e-}6$ & $6.617 \pm 4.8\text{e-}4$ & $6.603 \pm 0.047$ & $6.979 \pm 1.8$ & $6.519 \pm 1.1$ & $6.501 \pm 0.99$ \\
  & 2.0 & $6.617 \pm 4.8\text{e-}6$ & $6.617 \pm 4.8\text{e-}4$ & $6.603 \pm 0.047$ & $6.913 \pm 1.8$ & $6.516 \pm 1.1$ & $6.499 \pm 0.99$ \\
  & 4.0 & $6.617 \pm 4.8\text{e-}6$ & $6.617 \pm 4.8\text{e-}4$ & $6.603 \pm 0.047$ & $6.898 \pm 1.8$ & $6.516 \pm 1.1$ & $6.499 \pm 0.99$ \\
\addlinespace
$10^{-1}$ & 0.5 & $6.939 \pm 4.8\text{e-}6$ & $6.939 \pm 4.8\text{e-}4$ & $6.909 \pm 0.046$ & $6.578 \pm 0.79$ & $6.564 \pm 0.53$ & $6.575 \pm 0.49$ \\
  & 1.0 & $6.939 \pm 4.8\text{e-}6$ & $6.939 \pm 4.8\text{e-}4$ & $6.909 \pm 0.046$ & $6.576 \pm 0.76$ & $6.560 \pm 0.52$ & $6.574 \pm 0.49$ \\
  & 2.0 & $6.939 \pm 4.8\text{e-}6$ & $6.939 \pm 4.8\text{e-}4$ & $6.909 \pm 0.046$ & $6.586 \pm 0.75$ & $6.559 \pm 0.52$ & $6.572 \pm 0.49$ \\
  & 4.0 & $6.939 \pm 4.8\text{e-}6$ & $6.939 \pm 4.8\text{e-}4$ & $6.909 \pm 0.046$ & $6.548 \pm 0.71$ & $6.558 \pm 0.52$ & $6.570 \pm 0.49$ \\
\addlinespace
$1$ & 0.5 & $6.706 \pm 0.18$ & $6.466 \pm 0.31$ & $6.317 \pm 0.35$ & $6.426 \pm 0.44$ & $6.457 \pm 0.46$ & $6.506 \pm 0.44$ \\
  & 1.0 & $6.706 \pm 0.18$ & $6.466 \pm 0.31$ & $6.317 \pm 0.35$ & $6.415 \pm 0.44$ & $6.426 \pm 0.49$ & $6.507 \pm 0.47$ \\
  & 2.0 & $6.706 \pm 0.18$ & $6.466 \pm 0.31$ & $6.317 \pm 0.35$ & $6.485 \pm 0.47$ & $6.436 \pm 0.47$ & $6.492 \pm 0.47$ \\
  & 4.0 & $6.706 \pm 0.18$ & $6.466 \pm 0.31$ & $6.317 \pm 0.35$ & $6.446 \pm 0.43$ & $6.440 \pm 0.47$ & $6.490 \pm 0.47$ \\
\bottomrule
\end{tabular}
}

\parbox{\textwidth}{
\textsuperscript{*} ``lr'' and ``cln'' refer to the learning rate and clipping norm.  
}

\end{table}

\begin{table}[htbp]
\centering
\caption{The averaged test errors generated by GDC under different parameters and initialization strategies on the Volatility of S\&P Index dataset. The number of epochs and repetitions are 100 and 3, respectively.
Means and standard deviations are approximated as 4 decimal places and 2 significant figures, respectively. Excessively large means are shown as 2 significant figures.}
\label{tab:GDC-tune-2}
 
\resizebox{\textwidth}{!}{%
\begin{tabular}{l l l l l l l l}
\toprule
\textbf{lr\textsuperscript{*}} & \textbf{cln\textsuperscript{*}} & $\mathcal{N}(0, 10^{-3})$ & $\mathcal{N}(0, 10^{-2})$ & $\mathcal{N}(0, 10^{-1})$ & He & Glorot & LeCun \\ 
\midrule
$10^{-4}$ & 0.5 & $1.5749 \pm 9.4\text{e-}6$ & $1.5748 \pm 9.5\text{e-}4$ & $1.5662 \pm 0.12$ & $36.4861 \pm 38$ & 	$3.5685 \pm 2.1$ & 	$2.9456 \pm 1.9$ \\
  & 1.0 & $1.5749 \pm 9.4\text{e-}6$ & $1.5748 \pm 9.5\text{e-}4$ & $1.5589 \pm 0.12$ & $32.7838 \pm 33$ & 	$3.4803 \pm 2.1$ & 	$2.8840 \pm 1.9$ \\
  & 2.0 & $1.5749 \pm 9.4\text{e-}6$ & $1.5748 \pm 9.5\text{e-}4$ & $1.5588 \pm 0.12$ & $26.5857 \pm 24$ & 	$3.3319 \pm 2.1$ & 	$2.7815 \pm 1.9$ \\
  & 4.0 & $1.5749 \pm 9.4\text{e-}6$ & $1.5748 \pm 9.5\text{e-}4$ & $1.5588 \pm 0.12$ & $19.5192 \pm 15$ & 	$3.2107 \pm 2.1$ & 	$2.7161 \pm 1.9$ \\
\addlinespace
$10^{-3}$ & 0.5 & $1.6252 \pm 1.1\text{e-}5$ & $1.6232 \pm 0.0011$ & $1.4963 \pm 0.13$ & $17.0220 \pm 12$ & 	$2.8973 \pm 2.1$ & 	$2.4769 \pm 1.9$ \\
  & 1.0 & $1.6252 \pm 1.1\text{e-}5$ & $1.6232 \pm 0.0011$ & $1.4213 \pm 0.15$ & $10.1935 \pm 4.3$ & 	$2.4518 \pm 2.0 $ & 	$2.1844 \pm 1.8$ \\
  & 2.0 & $1.6252 \pm 1.1\text{e-}5$ & $1.6232 \pm 0.0011$ & $1.4206 \pm 0.15$ & $4.8447 \pm 2.0 $ & 	$1.9756 \pm 1.6$ & 	$1.8390 \pm 1.5$ \\
  & 4.0 & $1.6252 \pm 1.1\text{e-}5$ & $1.6232 \pm 0.0011$ & $1.4206 \pm 0.15$ &$3.2491 \pm 2.1$ & 	$1.6752 \pm 1.5$ & 	$1.6926 \pm 1.5$ \\
\addlinespace
$10^{-2}$ & 0.5 & $1.7398 \pm 1.8\text{e-}4$ & $1.6193 \pm 0.018$ & $0.2436 \pm 0.18$ & $1.8189 \pm 1.0 $ & 	$0.5180 \pm 0.35$ & 	$0.4778 \pm 0.33$ \\
  & 1.0 & $1.7398 \pm 1.8\text{e-}4$ & $1.6192 \pm 0.018$ & $0.1723 \pm 0.13$ & $0.9377 \pm 0.39$ & 	$0.2613 \pm 0.14$ & 	$0.2566 \pm 0.15$ \\
  & 2.0 & $1.7398 \pm 1.8\text{e-}4$ & $1.6192 \pm 0.018$ & $0.1719 \pm 0.13$ & $0.8079 \pm 0.35$ & 	$0.2208 \pm 0.10$ & 	$0.2194 \pm 0.13$ \\
  & 4.0 & $1.7398 \pm 1.8\text{e-}4$ & $1.6192 \pm 0.018$ & $0.1719 \pm 0.13$ & $0.5461 \pm 0.13$ & 	$0.2181 \pm 0.10$ & 	$0.2177 \pm 0.13$  \\
\addlinespace
$10^{-1}$ & 0.5 & $0.0236 \pm 0.0012$ & $\mathbf{0.0224 \pm 9.5\text{e-}4}$ & $0.0398 \pm 0.0029$ & $0.3150 \pm 0.19$ & 	$0.0306 \pm 0.013$ & 	$0.0528 \pm 0.023$ \\
  & 1.0 & $0.0234 \pm 0.0010$ & $0.0228 \pm 0.0008$ & $0.0415 \pm 0.0024$ & $0.0928 \pm 0.023$ & 	$0.0537 \pm 0.021$ & 	$0.0538 \pm 0.029$ \\
  & 2.0 & $0.0234 \pm 0.0010$ & $0.0228 \pm 0.0008$ & $0.0415 \pm 0.0024$ & $0.1156 \pm 0.018$ & 	$0.0402 \pm 0.016$ & 	$0.0296 \pm 0.0014$ \\
  & 4.0 & $0.0234 \pm 0.0010$ & $0.0228 \pm 0.0008$ & $0.0415 \pm 0.0024$ & $0.1392 \pm 0.11$ & 	$0.0395 \pm 0.0096$ & 	$0.0329 \pm 0.0073$ \\
\addlinespace
$1$ & 0.5 & $0.0781 \pm 0.067$ & $0.1820 \pm 0.051$ & $0.1196 \pm 0.025$ & $0.3024 \pm 0.19$ & 	$0.3630 \pm 0.33$ & 	$0.1343 \pm 0.024$ \\
  & 1.0 & $0.1716 \pm 0.061$ & $0.3682 \pm 0.47$ & $0.6027 \pm 0.23$ & $1.3722 \pm 0.69$ & 	$0.8726 \pm 0.87$ & 	$1.1417 \pm 1.3$ \\
  & 2.0 & $25.0467 \pm 34$ & $0.4549 \pm 0.17$ & $4.9552 \pm 5.2$ & \text{NaN}\textsuperscript{**} & \text{NaN}\textsuperscript{**} & $6.0\text{e}31 \pm 8.5\text{e}31$ \\
  & 4.0 & \text{NaN}\textsuperscript{**} & \text{NaN}\textsuperscript{**} & \text{NaN}\textsuperscript{**} & \text{NaN}\textsuperscript{**} & \text{NaN}\textsuperscript{**} & \text{NaN}\textsuperscript{**} \\
\bottomrule
\end{tabular}
}

\parbox{\textwidth}{
\textsuperscript{*} ``lr'' and ``cln'' refer to the learning rate and clipping norm. \\
\textsuperscript{**} ``NaN'' indicates a numerical overflow. 
}

\end{table}

\begin{table}[htbp]
\centering
\caption{The averaged test errors generated by GDC under different parameters and initialization strategies on the TIMIT dataset. The number of epochs and repetitions are 100 and 3, respectively.
Means and standard deviations are approximated as 3 decimal places and 2 significant figures, respectively.}
\label{tab:GDC-tune-3}
 
\small 

\begin{tabular}{l l l l l}
\toprule
\textbf{lr\textsuperscript{*}} & \textbf{cln\textsuperscript{*}} & $\mathcal{N}(0, 10^{-3})$ & $\mathcal{N}(0, 10^{-2})$ & $\mathcal{N}(0, 10^{-1})$ \\ 
\midrule
$10^{-4}$ & Any\textsuperscript{**} & $5.927 \pm 3.2\text{e-}6$ & $5.927 \pm 3.2\text{e-}4$ & $6.723 \pm 0.42$ \\
$10^{-3}$ & Any\textsuperscript{**} & $5.925 \pm 3.2\text{e-}6$ & $5.925 \pm 3.1\text{e-}4$ & $6.602 \pm 0.32$ \\
$10^{-2}$ & Any\textsuperscript{**} & $5.906 \pm 2.8\text{e-}6$ & $5.907 \pm 2.6\text{e-}4$ & $6.142 \pm 0.061$ \\
$10^{-1}$ & Any\textsuperscript{**} & $5.743 \pm 3.3\text{e-}6$ & $5.743 \pm 2.5\text{e-}4$ & $5.766 \pm 0.0076$ \\
$1$ & Any\textsuperscript{**} & $5.146 \pm 9.4\text{e-}4$ & $\mathbf{5.131 \pm 0.021}$ & $5.147 \pm 0.0033$ \\
\bottomrule
\end{tabular}

\parbox{\textwidth}{\scriptsize
\textsuperscript{*} ``lr'' and ``cln'' refer to the learning rate and clipping norm. \\
\textsuperscript{**} ``Any'' refers to any clipping norm in $\{0.5, 1, 2, 4\}$.
}

\end{table}

\begin{table}[htbp]
\centering
\caption{The averaged test errors generated by GDNes under different learning rates and initialization strategies on the Synthetic dataset. The number of epochs and repetitions are 50 and 10, respectively.
Means and standard deviations are approximated as 3 decimal places and 2 significant figures, respectively.}
\label{tab:GDNes-tune-1}

\small 

\begin{tabular}{l l l l l l l}
\toprule
\textbf{lr\textsuperscript{*}} 
& $\mathcal{N}(0, 10^{-3})$ & $\mathcal{N}(0, 10^{-2})$ & $\mathcal{N}(0, 10^{-1})$ & He & Glorot & LeCun \\ 
\midrule
$10^{-4}$ & $6.321 \pm 5.5\text{e-}6$ & $6.320 \pm 5.5\text{e-}4$ & $6.307 \pm 0.052$ & $9.000 \pm 3.3$ & $6.684 \pm 1.4$ & $6.582 \pm 1.3$ \\ 
$10^{-3}$ & $6.598 \pm 4.9\text{e-}6$ & $6.597 \pm 4.9\text{e-}4$ & $6.584 \pm 0.047$ & $7.048 \pm 1.8$ & $6.523 \pm 1.1$ & $6.507 \pm 0.99$ \\ 
$10^{-2}$ & $6.925 \pm 4.2\text{e-}6$ & $6.925 \pm 4.2\text{e-}4$ & $6.900 \pm 0.041$ & $6.698 \pm 0.64$ & $6.588 \pm 0.53$ & $6.593 \pm 0.51$ \\ 
$10^{-1}$ & $6.936 \pm 0.0030$ & $6.744 \pm 0.16$ & $6.353 \pm 0.37$ & \text{NaN}\textsuperscript{**} & $\mathbf{6.277 \pm 0.37}$ & $6.299 \pm 0.35$ \\ 
$1$ & \text{NaN}\textsuperscript{**} & \text{NaN}\textsuperscript{**} & \text{NaN}\textsuperscript{**} & \text{NaN}\textsuperscript{**} & \text{NaN}\textsuperscript{**} & \text{NaN}\textsuperscript{**} \\ 
\bottomrule
\end{tabular}

\parbox{\textwidth}{
\textsuperscript{*} ``lr'' refers to the learning rate. \\
\textsuperscript{**} ``NaN'' indicates a numerical overflow has occurred.
}

\end{table}

\begin{table}[htbp]
\centering
\caption{The averaged test errors generated by GDNes under different learning rates and initialization strategies on the Volatility of S\&P Index dataset. The number of epochs and repetitions are 100 and 3, respectively.
Means and standard deviations are approximated as 4 decimal places and 2 significant figures, respectively.}
\label{tab:GDNes-tune-2}

\resizebox{\textwidth}{!}{%
\begin{tabular}{l l l l l l l}
\toprule
\textbf{lr\textsuperscript{*}} 
& $\mathcal{N}(0, 10^{-3})$ & $\mathcal{N}(0, 10^{-2})$ & $\mathcal{N}(0, 10^{-1})$ & He & Glorot & LeCun \\ 
\midrule
$10^{-4}$ & $1.6226 \pm 1.1\text{e-}5$ & $1.6208 \pm 0.0011$ & $1.4363 \pm 0.14$ & $2.7727 \pm 2.2$ & $1.6792 \pm 1.6$ & $1.7277 \pm 1.6$ \\ 
$10^{-3}$ & $1.7446 \pm 1.3\text{e-}4$ & $1.6802 \pm 0.013$ & $0.1790 \pm 0.16$ & \text{NaN}\textsuperscript{**} & $0.2274 \pm 0.15$ & $0.2352 \pm 0.15$ \\ 
$10^{-2}$ & $0.0324 \pm 0.0013$ & $0.0302 \pm 0.0011$ & $0.0425 \pm 0.0022$ & \text{NaN}\textsuperscript{**} & $0.0384 \pm 0.016$ & $0.0361 \pm 0.0082$ \\ 
$10^{-1}$ & $0.0487 \pm 0.0072$ & $0.0443 \pm 0.013$ & $\mathbf{0.0235 \pm 9.7\text{e-}4}$ & \text{NaN}\textsuperscript{**} & \text{NaN}\textsuperscript{**} & $0.3281 \pm 0.42$ \\ 
$1$ & \text{NaN}\textsuperscript{**} & \text{NaN}\textsuperscript{**} & \text{NaN}\textsuperscript{**} & \text{NaN}\textsuperscript{**} & \text{NaN}\textsuperscript{**} & \text{NaN}\textsuperscript{**} \\ 
\bottomrule
\end{tabular}
}

\parbox{\textwidth}{
\textsuperscript{*} ``lr'' refers to the learning rate. \\
\textsuperscript{**} ``NaN'' indicates a numerical overflow has occurred.
}

\end{table}

\begin{table}[htbp]
\centering
\caption{The averaged test errors generated by GDNes under different learning rates and initialization strategies on the TIMIT dataset. The number of epochs and repetitions are 100 and 3, respectively.
Means and standard deviations are approximated as 3 decimal places and 2 significant figures, respectively.}
\label{tab:GDNes-tune-3}

\small 

\begin{tabular}{l l l l}
\toprule
\textbf{lr\textsuperscript{*}} 
& $\mathcal{N}(0, 10^{-3})$ & $\mathcal{N}(0, 10^{-2})$ & $\mathcal{N}(0, 10^{-1})$ \\ 
\midrule
$10^{-4}$ & $5.925 \pm 3.2\text{e-}6$ & $5.926 \pm 3.1\text{e-}4$ & $6.611 \pm 0.33$ \\ 
$10^{-3}$ & $5.908 \pm 2.8\text{e-}6$ & $5.908 \pm 2.7\text{e-}4$ & $6.146 \pm 0.062$ \\ 
$10^{-2}$ & $5.754 \pm 2.9\text{e-}6$ & $5.754 \pm 2.2\text{e-}4$ & $5.774 \pm 0.0072$ \\ 
$10^{-1}$ & $5.137 \pm 2.6\text{e-}4$ & $5.129 \pm 0.0096$ & $5.136 \pm 0.0036$ \\ 
$1$ & $4.994 \pm 0.018$ & $\mathbf{4.716 \pm 0.043}$ & $5.007 \pm 0.036$ \\ 
\bottomrule
\end{tabular}

\parbox{\textwidth}{
\textsuperscript{*} ``lr'' refers to the learning rate.  
}

\end{table}

\begin{table}[htbp]
\centering
\caption{The averaged test errors generated by SGD under different parameters and initialization strategies on the Synthetic dataset. The number of epochs and repetitions are 50 and 10, respectively.
Means and standard deviations are approximated as 3 decimal places and 2 significant figures, respectively.}
\label{tab:SGD-tune-1}

\resizebox{\textwidth}{!}{%
\begin{tabular}{l l l l l l l l}
\toprule
\textbf{lr\textsuperscript{*}} & \textbf{bs\textsuperscript{*}} & $\mathcal{N}(0, 10^{-3})$ & $\mathcal{N}(0, 10^{-2})$ & $\mathcal{N}(0, 10^{-1})$ & He & Glorot & LeCun \\ 
\midrule
$10^{-4}$ & 1 & $6.303 \pm 8.9\text{e-}4$ & $6.303 \pm 0.0010$ & $6.290 \pm 0.052$ & $116.126 \pm 2.4\text{e}2$ & $7.076 \pm 1.6$ & $6.872 \pm 1.4$ \\
  & 2 & $6.298 \pm 5.3\text{e-}4$ & $6.298 \pm 5.9\text{e-}4$ & $6.285 \pm 0.052$ & $117.704 \pm 2.5\text{e}2$ & $7.124 \pm 1.6$ & $6.912 \pm 1.4$ \\
  & 4 & $6.292 \pm 2.7\text{e-}4$ & $6.292 \pm 6.7\text{e-}4$ & $\mathbf{6.278 \pm 0.053}$ & $96.677 \pm 2.0\text{e}2$ & $7.134 \pm 1.6$ & $6.919 \pm 1.4$ \\
\addlinespace
$10^{-3}$ & 1 & $6.444 \pm 0.0069$ & $6.444 \pm 0.0069$ & $6.430 \pm 0.047$ & $27.571 \pm 39$ & $6.498 \pm 1.2$ & $6.444 \pm 1.1$ \\
  & 2 & $6.418 \pm 0.0049$ & $6.418 \pm 0.0048$ & $6.405 \pm 0.048$ & $28.364 \pm 49$ & $6.688 \pm 1.4$ & $6.612 \pm 1.2$ \\
  & 4 & $6.366 \pm 0.0026$ & $6.366 \pm 0.0028$ & $6.353 \pm 0.052$ & $15.184 \pm 13$ & $6.726 \pm 1.4$ & $6.632 \pm 1.2$ \\
\addlinespace
$10^{-2}$ & 1 & $6.653 \pm 0.025$ & $6.652 \pm 0.025$ & $6.628 \pm 0.051$ & $6.350 \pm 0.82$ & $6.387 \pm 0.50$ & $6.398 \pm 0.46$ \\
  & 2 & $6.805 \pm 0.028$ & $6.805 \pm 0.028$ & $6.792 \pm 0.049$ & $6.622 \pm 1.2$ & $6.538 \pm 0.74$ & $6.549 \pm 0.67$ \\
  & 4 & $6.772 \pm 0.019$ & $6.772 \pm 0.019$ & $6.760 \pm 0.049$ & $6.806 \pm 1.6$ & $6.549 \pm 1.0$ & $6.553 \pm 0.91$ \\
\addlinespace
$10^{-1}$ & 1 & $6.615 \pm 0.12$ & $6.405 \pm 0.15$ & $6.293 \pm 0.20$ & $6.349 \pm 0.24$ & $6.310 \pm 0.21$ & $6.313 \pm 0.21$ \\
  & 2 & $6.869 \pm 0.079$ & $6.867 \pm 0.078$ & $6.814 \pm 0.082$ & $6.687 \pm 0.10$ & $6.708 \pm 0.084$ & $6.709 \pm 0.083$ \\
  & 4 & $6.931 \pm 0.069$ & $6.931 \pm 0.069$ & $6.892 \pm 0.085$ & $6.764 \pm 0.24$ & $6.756 \pm 0.17$ & $6.766 \pm 0.16$ \\
\addlinespace
$1$ & 1 & $6.916 \pm 0.93$ & $6.920 \pm 0.94$ & $6.922 \pm 0.95$ & $6.904 \pm 1.1$ & $6.896 \pm 1.1$ & $6.896 \pm 1.1$ \\
  & 2 & $6.824 \pm 0.41$ & $6.819 \pm 0.41$ & $6.825 \pm 0.41$ & $6.809 \pm 0.41$ & $6.770 \pm 0.38$ & $6.779 \pm 0.37$ \\
  & 4 & $6.861 \pm 0.35$ & $6.859 \pm 0.35$ & $6.845 \pm 0.36$ & $6.841 \pm 0.31$ & $6.852 \pm 0.36$ & $6.856 \pm 0.36$ \\
\bottomrule
\end{tabular}
}

\parbox{\textwidth}{
\textsuperscript{*} ``lr'' and ``bs'' refer to the learning rate and batch size. \\
\textsuperscript{**} ``NaN'' indicates a numerical overflow has occurred. 
}

\end{table}

\begin{table}[htbp]
\centering
\caption{The averaged test errors generated by SGD under different parameters and initialization strategies on the Volatility of S\&P Index dataset. The number of epochs and repetitions are 100 and 3, respectively.
Means and standard deviations are approximated as 4 decimal places and 2 significant figures, respectively. Excessively large means are shown as 2 significant figures.}
\label{tab:SGD-tune-2}

\resizebox{\textwidth}{!}{%
\begin{tabular}{l l l l l l l l}
\toprule
\textbf{lr\textsuperscript{*}} & \textbf{bs\textsuperscript{*}} & $\mathcal{N}(0, 10^{-3})$ & $\mathcal{N}(0, 10^{-2})$ & $\mathcal{N}(0, 10^{-1})$ & He & Glorot & LeCun \\ 
\midrule
$10^{-4}$ & 25 & $1.6564 \pm 0.0057$ & $1.6531 \pm 0.0059$ & $1.3446 \pm 0.16$ & $1.0\text{e}4 \pm 1.4\text{e}4$ & $1.3674 \pm 1.4$ & $1.4082 \pm 1.3$ \\
  & 50 & $1.6163 \pm 0.0026$ & $1.6149 \pm 0.0028$ & $1.4764 \pm 0.13$ & $3.5643 \pm 3.3$ & $2.0467 \pm 1.9$ & $1.9730 \pm 1.8$ \\
  & 100 & $1.5952 \pm 0.0029$ & $1.5947 \pm 0.0031$ & $1.5441 \pm 0.13$ & $5.3961 \pm 3.9$ & $2.6918 \pm 2.2$ & $2.4164 \pm 2.0$ \\
\addlinespace
$10^{-3}$ & 25 & $1.7480 \pm 0.022$ & $1.1011 \pm 0.086$ & $0.1062 \pm 0.055$ & $2.1\text{e}14 \pm 3.0\text{e}14$ & $0.2669 \pm 0.18$ & $0.2184 \pm 0.13$ \\
  & 50 & $1.7440 \pm 0.012$ & $1.6992 \pm 0.018$ & $0.3311 \pm 0.22$ & $0.7875 \pm 0.55$ & $0.4688 \pm 0.38$ & $0.4534 \pm 0.38$ \\
  & 100 & $1.7223 \pm 0.026$ & $1.7146 \pm 0.026$ & $1.0727 \pm 0.20$ & $1.6500 \pm 1.5$ & $0.9877 \pm 1.0$ & $1.0212 \pm 1.0$ \\
\addlinespace
$10^{-2}$ & 25 & $\mathbf{0.0181 \pm 0.0011}$ & $0.0198 \pm 9.9\text{e-}4$ & $0.0404 \pm 0.0039$ & $0.0674 \pm 0.019$ & $0.0428 \pm 0.015$ & $0.0311 \pm 0.024$ \\
  & 50 & $0.0341 \pm 9.0\text{e-}4$ & $0.0315 \pm 8.0\text{e-}4$ & $0.0505 \pm 0.0038$ & \text{NaN}\textsuperscript{**} & $0.0730 \pm 0.0096$ & $0.0479 \pm 0.015$ \\
  & 100 & $0.9463 \pm 0.13$ & $0.1335 \pm 0.053$ & $0.0654 \pm 0.018$ & \text{NaN}\textsuperscript{**} & $0.1653 \pm 0.091$ & $0.1201 \pm 0.047$ \\
\addlinespace
$10^{-1}$ & 25 & $0.0280 \pm 0.0013$ & $0.0286 \pm 9.8\text{e-}4$ & $0.0309 \pm 0.0044$ & \text{NaN}\textsuperscript{**} & $0.0284 \pm 0.0082$ & $0.025 \pm 0.0068$ \\
  & 50 & $0.0256 \pm 0.0018$ & $0.0268 \pm 0.0021$ & $0.0341 \pm 0.0022$ & \text{NaN}\textsuperscript{**} & $0.0486 \pm 0.018$ & $0.0373 \pm 0.025$ \\
  & 100 & $0.0190 \pm 0.0031$ & $0.0231 \pm 0.0029$ & $0.0406 \pm 0.0033$ & \text{NaN}\textsuperscript{**} & $0.0528 \pm 0.033$ & $0.0340 \pm 0.033$ \\
\addlinespace
$1$ & Any\textsuperscript{***} & \text{NaN}\textsuperscript{**} & \text{NaN}\textsuperscript{**} & \text{NaN}\textsuperscript{**} & \text{NaN}\textsuperscript{**} & \text{NaN}\textsuperscript{**} & \text{NaN}\textsuperscript{**} \\
\bottomrule
\end{tabular}
}

\parbox{\textwidth}{ 
\textsuperscript{*} ``lr'' and ``bs'' refer to the learning rate and batch size. \\
\textsuperscript{**} ``NaN'' indicates a numerical overflow has occurred. \\
\textsuperscript{***} ``Any'' refers to any batch size in $\{25,50,100\}$.
}

\end{table}

\begin{table}[htbp]
\centering
\caption{The averaged test errors generated by SGD under different parameters and initialization strategies on the TIMIT dataset. The number of epochs and repetitions are 100 and 3, respectively.
Means and standard deviations are approximated as 3 decimal places and 2 significant figures, respectively.}
\label{tab:SGD-tune-3}

\small 

\begin{tabular}{l l l l l}
\toprule
\textbf{lr\textsuperscript{*}} & \textbf{bs\textsuperscript{*}} & $\mathcal{N}(0, 10^{-3})$ & $\mathcal{N}(0, 10^{-2})$ & $\mathcal{N}(0, 10^{-1})$ \\ 
\midrule
$10^{-4}$ & 4 & $5.924 \pm 3.2\text{e-}6$ & $5.925 \pm 3.1\text{e-}4$ & $6.573 \pm 0.30$ \\
  & 8 & $5.925 \pm 3.2\text{e-}6$ & $5.926 \pm 3.1\text{e-}4$ & $6.647 \pm 0.36$ \\
  & 16 & $5.926 \pm 3.2\text{e-}6$ & $5.927 \pm 3.2\text{e-}4$ & $6.688 \pm 0.39$ \\
\addlinespace
$10^{-3}$ & 4 & $5.900 \pm 2.7\text{e-}6$ & $5.900 \pm 2.5\text{e-}4$ & $6.095 \pm 0.051$ \\
  & 8 & $5.912 \pm 2.9\text{e-}6$ & $5.912 \pm 2.8\text{e-}4$ & $6.237 \pm 0.093$ \\
  & 16 & $5.918 \pm 3.0\text{e-}6$ & $5.919 \pm 3.0\text{e-}4$ & $6.387 \pm 0.17$ \\
\addlinespace
$10^{-2}$ & 4 & $5.692 \pm 4.5\text{e-}6$ & $5.692 \pm 3.2\text{e-}4$ & $5.710 \pm 0.0072$ \\
  & 8 & $5.788 \pm 2.6\text{e-}6$ & $5.788 \pm 2.0\text{e-}4$ & $5.824 \pm 0.010$ \\
  & 16 & $5.842 \pm 2.3\text{e-}6$ & $5.842 \pm 2.0\text{e-}4$ & $5.912 \pm 0.019$ \\
\addlinespace
$10^{-1}$ & 4 & $5.087 \pm 0.0018$ & $5.069 \pm 0.025$ & $5.089 \pm 0.0031$ \\
  & 8 & $5.211 \pm 2.5\text{e-}4$ & $5.203 \pm 0.0097$ & $5.213 \pm 0.0044$ \\
  & 16 & $5.367 \pm 4.1\text{e-}5$ & $5.365 \pm 0.0024$ & $5.371 \pm 0.0061$ \\
\addlinespace
$1$ & 4 & $4.376 \pm 0.072$ & $\mathbf{4.304 \pm 0.0025}$ & $4.909 \pm 0.15$ \\
  & 8 & $4.892 \pm 0.10$ & $4.612 \pm 0.11$ & $4.994 \pm 0.015$ \\
  & 16 & $4.981 \pm 0.012$ & $4.900 \pm 0.066$ & $4.988 \pm 0.0027$ \\
\bottomrule
\end{tabular}

\parbox{\textwidth}{ 
\textsuperscript{*} ``lr'' and ``bs'' refer to the learning rate and batch size.  
}

\end{table}

\begin{table}[htbp]
\centering
\caption{The averaged test errors generated by Adam under different parameters and initialization strategies on the Synthetic dataset. The number of epochs and repetitions are 50 and 10, respectively.
Means and standard deviations are approximated as 3 decimal places and 2 significant figures, respectively.}
\label{tab:Adam-tune-1}

\resizebox{\textwidth}{!}{%
\begin{tabular}{l l l l l l l l}
\toprule
\textbf{lr\textsuperscript{*}} & \textbf{bs\textsuperscript{*}} & $\mathcal{N}(0, 10^{-3})$ & $\mathcal{N}(0, 10^{-2})$ & $\mathcal{N}(0, 10^{-1})$ & He & Glorot & LeCun \\ 
\midrule
$10^{-4}$ & 1 & $6.378 \pm 0.0031$ & $6.378 \pm 0.0032$ & $6.364 \pm 0.040$ & $58.117 \pm 1.1\text{e}2$ & $6.661 \pm 1.3$ & $6.547 \pm 1.1$ \\
  & 2 & $6.352 \pm 0.0020$ & $6.352 \pm 0.0021$ & $6.342 \pm 0.040$ & $88.731 \pm 1.8\text{e}2$ & $6.887 \pm 1.5$ & $6.729 \pm 1.3$ \\
  & 4 & $6.320 \pm 6.0\text{e-}4$ & $6.320 \pm 5.8\text{e-}4$ & $6.309 \pm 0.046$ & $113.413 \pm 2.3\text{e}2$ & $7.007 \pm 1.5$ & $6.815 \pm 1.4$ \\
\addlinespace
$10^{-3}$ & 1 & $6.529 \pm 0.038$ & $6.507 \pm 0.047$ & $6.544 \pm 0.055$ & $6.557 \pm 1.1$ & $6.454 \pm 0.44$ & $6.469 \pm 0.39$ \\
  & 2 & $6.701 \pm 0.017$ & $6.707 \pm 0.017$ & $6.719 \pm 0.025$ & $8.357 \pm 2.7$ & $6.574 \pm 0.80$ & $6.579 \pm 0.70$ \\
  & 4 & $6.617 \pm 0.0080$ & $6.616 \pm 0.0078$ & $6.607 \pm 0.015$ & $18.354 \pm 21$ & $6.591 \pm 1.1$ & $6.557 \pm 0.96$ \\
\addlinespace
$10^{-2}$ & 1 & $6.334 \pm 0.20$ & $6.267 \pm 0.20$ & $6.221 \pm 0.12$ & $6.302 \pm 0.19$ & $6.335 \pm 0.20$ & $6.316 \pm 0.23$ \\
  & 2 & $6.705 \pm 0.068$ & $6.677 \pm 0.065$ & $6.677 \pm 0.092$ & $6.638 \pm 0.17$ & $6.651 \pm 0.094$ & $6.648 \pm 0.095$ \\
  & 4 & $6.747 \pm 0.067$ & $6.745 \pm 0.062$ & $6.745 \pm 0.064$ & $6.818 \pm 0.67$ & $6.742 \pm 0.17$ & $6.746 \pm 0.14$ \\
\addlinespace
$10^{-1}$ & 1 & $6.120 \pm 0.24$ & $\mathbf{6.065 \pm 0.24}$ & $6.104 \pm 0.23$ & $6.083 \pm 0.27$ & $6.134 \pm 0.25$ & $6.126 \pm 0.29$ \\
  & 2 & $6.739 \pm 0.14$ & $6.740 \pm 0.15$ & $6.721 \pm 0.16$ & $6.714 \pm 0.12$ & $6.742 \pm 0.16$ & $6.737 \pm 0.22$ \\
  & 4 & $6.761 \pm 0.21$ & $6.767 \pm 0.22$ & $6.804 \pm 0.16$ & $6.855 \pm 0.13$ & $6.826 \pm 0.19$ & $6.837 \pm 0.17$ \\
\addlinespace
$1$ & 1 & $6.540 \pm 0.68$ & $6.740 \pm 0.36$ & $6.938 \pm 0.50$ & $6.789 \pm 0.74$ & $6.613 \pm 0.54$ & $6.610 \pm 0.62$ \\
  & 2 & $6.884 \pm 0.39$ & $6.737 \pm 0.40$ & $6.868 \pm 0.57$ & $6.767 \pm 0.37$ & $6.888 \pm 0.62$ & $6.928 \pm 0.51$ \\
  & 4 & $6.796 \pm 0.29$ & $7.000 \pm 0.46$ & $6.992 \pm 0.88$ & $6.787 \pm 0.63$ & $6.926 \pm 0.18$ & $6.826 \pm 0.20$ \\
\bottomrule
\end{tabular}
}

\parbox{\textwidth}{ 
\textsuperscript{*} ``lr'' and ``bs'' refer to the learning rate and batch size.  
}

\end{table}

\begin{table}[htbp]
\centering
\caption{The averaged test errors generated by Adam under different parameters and initialization strategies on the Volatility of S\&P Index dataset. The number of epochs and repetitions are 100 and 3, respectively.
Means and standard deviations are approximated as 4 decimal places and 2 significant figures, respectively. Excessively large means are shown as 2 significant figures.}
\label{tab:Adam-tune-2}

\resizebox{\textwidth}{!}{%
\begin{tabular}{l l l l l l l l}
\toprule
\textbf{lr\textsuperscript{*}} & \textbf{bs\textsuperscript{*}} & $\mathcal{N}(0, 10^{-3})$ & $\mathcal{N}(0, 10^{-2})$ & $\mathcal{N}(0, 10^{-1})$ & He & Glorot & LeCun \\ 
\midrule
$10^{-4}$ & 25 & $\mathbf{0.0092 \pm 0.0018}$ & $0.0124 \pm 0.0040$ & $0.0291 \pm 0.0093$ & $1.3958 \pm 0.25$ & $0.1290 \pm 0.10$ & $0.1148 \pm 0.093$ \\
  & 50 & $0.0520 \pm 0.050$ & $0.0579 \pm 0.058$ & $0.0857 \pm 0.033$ & $5.4409 \pm 3.2$ & $0.3801 \pm 0.22$ & $0.3909 \pm 0.25$ \\
  & 100 & $0.7278 \pm 0.51$ & $0.8798 \pm 0.51$ & $0.7811 \pm 0.40$ & $11.3302 \pm 8.3$ & $1.4518 \pm 1.2$ & $1.4038 \pm 1.2$ \\
\addlinespace
$10^{-3}$ & 25 & $0.0454 \pm 0.021$ & $0.0434 \pm 0.018$ & $0.0350 \pm 0.0084$ & $0.1106 \pm 0.074$ & $0.0270 \pm 0.018$ & $0.0206 \pm 0.014$ \\
  & 50 & $0.0590 \pm 0.033$ & $0.0597 \pm 0.031$ & $0.0381 \pm 0.012$ & $0.4460 \pm 0.25$ & $0.0511 \pm 0.050$ & $0.0397 \pm 0.036$ \\
  & 100 & $0.0172 \pm 0.010$ & $0.0213 \pm 0.015$ & $0.0353 \pm 0.0071$ & $1.2725 \pm 0.48$ & $0.0741 \pm 0.064$ & $0.0698 \pm 0.062$ \\
\addlinespace
$10^{-2}$ & 25 & $0.0351 \pm 0.0026$ & $0.0333 \pm 0.0061$ & $0.0427 \pm 0.0096$ & $0.0408 \pm 0.0056$ & $0.0481 \pm 0.0062$ & $0.0485 \pm 0.0081$ \\
  & 50 & $0.0552 \pm 0.0072$ & $0.0448 \pm 0.0063$ & $0.0515 \pm 0.0048$ & $0.0397 \pm 0.0060$ & $0.0496 \pm 0.0050$ & $0.0527 \pm 0.0071$ \\
  & 100 & $0.0195 \pm 0.011$ & $0.0294 \pm 0.0098$ & $0.0322 \pm 0.0084$ & $0.0759 \pm 0.050$ & $0.0290 \pm 0.019$ & $0.0490 \pm 0.023$ \\
\addlinespace
$10^{-1}$ & 25 & $0.0357 \pm 0.0082$ & $2.2\text{e}4 \pm 3.2\text{e}4$ & $0.0500 \pm 0.015$ & $0.0407 \pm 0.0073$ & $9.7\text{e}3 \pm 1.4\text{e}4$ & $0.0543 \pm 0.028$ \\
  & 50 & \text{NaN}\textsuperscript{**} & $0.0574 \pm 0.024$ & \text{NaN}\textsuperscript{**} & $0.0333 \pm 0.013$ & $0.0409 \pm 0.0081$ & $0.0371 \pm 0.010$ \\
  & 100 & $0.0321 \pm 0.0069$ & \text{NaN}\textsuperscript{**} & \text{NaN}\textsuperscript{**} & $0.0256 \pm 0.010$ & $0.0207 \pm 0.0072$ & $0.0582 \pm 0.0093$ \\
\addlinespace
$1$ & Any\textsuperscript{***} & \text{NaN}\textsuperscript{**} & \text{NaN}\textsuperscript{**} & \text{NaN}\textsuperscript{**} & \text{NaN}\textsuperscript{**} & \text{NaN}\textsuperscript{**} & \text{NaN}\textsuperscript{**} \\
\bottomrule
\end{tabular}
}

\parbox{\textwidth}{ 
\textsuperscript{*} ``lr'' and ``bs'' refer to the learning rate and batch size. \\
\textsuperscript{**} ``NaN'' indicates a numerical overflow has occurred. \\
\textsuperscript{***} ``Any'' refers to any batch size in $\{25,50,100\}$.
}

\end{table}

\begin{table}[htbp]
\centering
\caption{The averaged test errors generated by Adam under different parameters and initialization strategies on the TIMIT dataset. The number of epochs and repetitions are 100 and 3, respectively.
Means and standard deviations are approximated as 3 decimal places and 2 significant figures, respectively.}
\label{tab:Adam-tune-3}

\small 

\begin{tabular}{l l l l l}
\toprule
\textbf{lr\textsuperscript{*}} & \textbf{bs\textsuperscript{*}} & $\mathcal{N}(0, 10^{-3})$ & $\mathcal{N}(0, 10^{-2})$ & $\mathcal{N}(0, 10^{-1})$ \\ 
\midrule
$10^{-4}$ & 4 & $4.918 \pm 0.017$ & $4.887 \pm 0.0074$ & $5.005 \pm 0.19$ \\
  & 8 & $4.956 \pm 0.017$ & $4.927 \pm 0.0076$ & $5.279 \pm 0.15$ \\
  & 16 & $5.033 \pm 0.043$ & $4.995 \pm 0.0031$ & $5.570 \pm 0.053$ \\
\addlinespace
$10^{-3}$ & 4 & $4.232 \pm 0.059$ & $4.208 \pm 0.077$ & $4.317 \pm 0.030$ \\
  & 8 & $4.245 \pm 0.046$ & $\mathbf{4.201 \pm 0.054}$ & $4.277 \pm 0.044$ \\
  & 16 & $4.318 \pm 0.079$ & $4.237 \pm 0.084$ & $4.258 \pm 0.038$ \\
\addlinespace
$10^{-2}$ & 4 & $4.507 \pm 0.37$ & $4.296 \pm 0.091$ & $4.557 \pm 0.32$ \\
  & 8 & $4.450 \pm 0.42$ & $4.505 \pm 0.36$ & $4.371 \pm 0.043$ \\
  & 16 & $4.469 \pm 0.38$ & $4.472 \pm 0.38$ & $4.784 \pm 0.29$ \\
\addlinespace
$10^{-1}$ & 4 & $4.991 \pm 0.0016$ & $4.992 \pm 0.0024$ & $4.991 \pm 0.0029$ \\
  & 8 & $4.997 \pm 0.0013$ & $4.997 \pm 1.6\text{e-}4$ & $4.998 \pm 3.9\text{e-}4$ \\
  & 16 & $4.978 \pm 3.6\text{e-}4$ & $4.978 \pm 7.2\text{e-}4$ & $4.978 \pm 4.7\text{e-}4$ \\
\addlinespace
$1$ & 4 & $4.990 \pm 0.0088$ & $5.080 \pm 0.049$ & $5.051 \pm 0.025$ \\
  & 8 & $5.000 \pm 0.0027$ & $5.004 \pm 0.0064$ & $5.022 \pm 0.0064$ \\
  & 16 & $4.953 \pm 0.026$ & $4.971 \pm 0.024$ & $4.932 \pm 0.0049$ \\
\bottomrule
\end{tabular}

\parbox{\textwidth}{ 
\textsuperscript{*} ``lr'' and ``bs'' refer to the learning rate and batch size.  
}

\end{table}

\end{document}